\documentclass[10pt,reqno]{amsart}
\usepackage[T1]{fontenc}
\usepackage[utf8]{inputenc}
\usepackage{lmodern}
\usepackage{amsmath,amssymb,mathtools}
\usepackage{microtype}
\usepackage{xcolor}
\usepackage{enumitem}
\usepackage{comment}
\usepackage[alphabetic]{amsrefs}
\usepackage{booktabs}
\usepackage[a4paper,margin=28mm]{geometry}
\usepackage[hidelinks]{hyperref}
\usepackage[capitalise,nameinlink]{cleveref}   
\makeatletter
\AtBeginDocument{%
  \@ifundefined{firstaid@cref@updatelabeldata}{}{%
    \gdef\cref@fix@ctr{}\gdef\cref@fix@type{}%
    \let\cref@fix@orig@optarg\refstepcounter@optarg
    \def\refstepcounter@optarg[#1]#2{%
      \gdef\cref@fix@ctr{#2}\gdef\cref@fix@type{#1}%
      \cref@fix@orig@optarg[#1]{#2}}%
    \let\cref@fix@orig@noarg\refstepcounter@noarg
    \def\refstepcounter@noarg#1{%
      \gdef\cref@fix@ctr{#1}\gdef\cref@fix@type{#1}%
      \cref@fix@orig@noarg{#1}}%
    \renewcommand\firstaid@cref@updatelabeldata[1]{%
      \cref@constructprefix{#1}{\cref@result}%
      \edef\cref@fix@tmp{#1}%
      \ifx\cref@fix@tmp\cref@fix@ctr
        \let\@tempa\cref@fix@type
      \else
        \let\@tempa\cref@fix@tmp
      \fi
      \ifx\@tempa\cref@fix@tmp
        \@ifundefined{cref@#1@alias}{}%
          {\edef\@tempa{\csname cref@#1@alias\endcsname}}%
      \fi
      \protected@xdef\cref@gcurrentlabel@temp{%
        [\@tempa][\arabic{#1}][\cref@result]%
        \csname p@#1\expandafter\endcsname\csname the#1\endcsname}%
      \aftergroup\firstaid@cref@smugglelabel}%
    \let\cref@fix@orig@appendix\appendix
    \def\appendix{\cref@fix@orig@appendix
      \crefalias{section}{appendix}\crefalias{subsection}{subappendix}}%
  }%
}
\makeatother
\allowdisplaybreaks[1]

\providecommand{\germ}{\mathfrak}

\numberwithin{equation}{section}

\theoremstyle{plain}
\newtheorem{theorem}{Theorem}[section]
\newtheorem{proposition}[theorem]{Proposition}
\newtheorem{lemma}[theorem]{Lemma}
\newtheorem{corollary}[theorem]{Corollary}

\theoremstyle{definition}
\newtheorem{definition}[theorem]{Definition}
\theoremstyle{remark}
\newtheorem{remark}[theorem]{Remark}

\newtheorem{observation}[theorem]{Observation}

\crefformat{enumi}{(#2#1#3)}
\Crefformat{enumi}{(#2#1#3)}
\crefrangeformat{enumi}{(#3#1#4)--(#5#2#6)}
\crefname{observation}{Observation}{Observations}
\Crefname{observation}{Observation}{Observations}

\newcommand{\Z}{\mathbb{Z}}      
\newcommand{\C}{\mathbb{C}}      

\DeclareMathOperator{\soc}{soc}
\DeclareMathOperator{\supp}{supp}
\DeclareMathOperator{\Hom}{Hom}
\DeclareMathOperator{\End}{End}
\DeclareMathOperator{\Ext}{Ext}
\DeclareMathOperator{\ext}{ext}
\DeclareMathOperator{\Comp}{Comp}
\DeclareMathOperator{\Irr}{Irr}

\DeclareMathOperator{\rank}{rank}
\DeclareMathOperator{\Mat}{Mat}

\DeclareMathOperator{\GL}{GL}
\DeclareMathOperator{\Aut}{Aut}

\newcommand{\Sh}{\mathsf S}
\newcommand{\qf}{\mathsf q}
\newcommand{\hgr}{\mathsf h}
\DeclareMathOperator{\tr}{tr}

\newcommand{\m}{\mathfrak m}
\newcommand{\n}{\mathfrak n}
\newcommand{\bd}{\boldsymbol d}
\newcommand{\be}{\boldsymbol e}
\newcommand{\one}{\mathbf 1}
\newcommand{\sq}{\square}

\newcommand{\ladp}{\mathfrak p}
\newcommand{\ladq}{\mathfrak q}
\newcommand{\dfct}{\mathfrak d}
\newcommand{\SOC}{\mathcal{SOC}}
\newcommand{\Umin}{\mathcal U}
\newcommand{\NV}{\Lambda}
\newcommand{\bpi}{\boldsymbol\pi}
\newcommand{\Mq}{\mathbf M}
\newcommand{\Zel}{Z}
\newcommand{\Rlead}{\mathcal R}
\newcommand{\incl}{\iota}
\newcommand{\rowmap}{\lambda}
\DeclareMathOperator{\diag}{diag}
\DeclareMathOperator{\pr}{pr}
\newcommand{\Epairs}{\mathcal E}
\newcommand{\Ilist}{\mathcal A}

\newcommand{\gd}{acceptable}

\newcommand{\Lgd}{$L$-acceptable}
\newcommand{\Rgd}{$R$-acceptable}
\newcommand{\extremal}{left-extremal}
\newcommand{\Extremal}{Left-extremal}

\newcommand{\scom}{\perp}
\newcommand{\nscom}{\not\perp}

\definecolor{claudecol}{rgb}{0.72,0.09,0.45}

\newcommand{\dist}{distinguished}
\newcommand{\Dist}{Distinguished}

\newcommand{\gdim}{\operatorname{\underline{dim}}}
\newcommand{\Cm}[1]{\mathbf C_{#1}}
\newcommand{\Orb}[1]{\overline{\mathcal O}_{#1}}
\newcommand{\dat}[1]{\partial_{#1}}
\newcommand{\Tmap}[3][]{T\if\relax\detokenize{#1}\relax\else^{#1}\fi_{#2\if\relax\detokenize{#3}\relax\else;#3\fi}}

\begin{document}

\sbox0{\tiny$\square$}\sbox0{\scriptsize$\square$}%
\sbox0{\footnotesize$\square$}\sbox0{\small$\square$}%

\title[Socle formulas for rigid components]
{Socle formulas for rigid irreducible components\\
of Lusztig's nilpotent varieties in type $A$}

\author{Erez Lapid}
\address{Department of Mathematics, Weizmann Institute of Science, Rehovot 7610001, Israel}
\email{erez.m.lapid@gmail.com}

\author{Alberto M\'{\i}nguez}
\address{Departamento de \'Algebra and Instituto de Matem\'aticas (IMUS), Universidad de Sevilla,
C/ Tarfia s/n, 41012 Sevilla (Spain)}
\email{minguez.alberto@gmail.com}

\thanks{The second named author was supported by the Principal Investigator project PAT4832423 of the
Austrian Science Fund (FWF) and the projects ATR2024-154613 funded by
MICIU/\allowbreak AEI/\allowbreak 10.13039/\allowbreak 501100011033 and PID2024-156912NB-I00 funded by
MICIU/\allowbreak AEI/\allowbreak 10.13039/\allowbreak 501100011033/\allowbreak FEDER}

\date{\today}

\begin{abstract}
Let $F$ be a non-archimedean local field.
For any irreducible component $C$ of a nilpotent variety of type $A$
let $\bpi(C)$ be the irreducible representation of a general linear group over $F$ attached to it. We show that if
the multisegment of $C$ is multiplicity free, $C$ is rigid if and only if $\bpi(C)$ is $\sq$-irreducible. In this case,
\[
 \bpi(C*D)=\soc\bigl(\bpi(C)\times\bpi(D)\bigr),\qquad
 \bpi(D*C)=\soc\bigl(\bpi(D)\times\bpi(C)\bigr)
\]
for every irreducible component $D$, where $*$ is the binary operation defined by Aizenbud and Lapid,
in accordance with a conjecture of Lapid and M\'inguez.

We also give a computer-assisted example in $A_7$ of a non-simple rigid module $z\ne0$
which does not admit a proper rigid submodule $y\ne0$ such that $\Ext_\Pi^1(z,z/y)=0$, answering negatively a question of
Aizenbud and Lapid.
\end{abstract}

\maketitle
\enlargethispage{2pt}

\setcounter{tocdepth}{1}
\tableofcontents

\section{Introduction}\label{sec:intro}

Let $Q$ be a quiver without loops.
The irreducible components of Lusztig's nilpotent
varieties $\NV(\bd)$, as we vary over all graded dimensions $\bd$ with finite support,
form a set $\Comp$ that parametrizes Lusztig's canonical basis of the
negative part of the quantized enveloping algebra attached to $Q$, as well as
the vertices of the crystal graph $B(\infty)$ \cites{MR1035415,MR1088333,MR1115118,MR1458969}.

The set $\Comp$ carries a binary operation $*$, introduced and studied in \cite{MR4847234}.
Viewing $\NV(\bd)$ as modules over the preprojective algebra $\Pi$ of $Q$,
the product $C_1*C_2$ is the closure of the set of extensions of $x_1$ by $x_2$,
where $(x_1,x_2)$ ranges over the open subset of $C_1\times C_2$ on which
$\dim\Ext^1_\Pi(x_1,x_2)$ is minimal. In the case $C_2$ consists of
the simple module at a vertex $i$ of $Q$, $C_1\mapsto C_1*C_2$ is the crystal operator
$\tilde f_i$ of $B(\infty)$.

From now on we take $Q$ to be the biinfinite equioriented quiver of type $A$.
Fix a non-archimedean local field $F$ and a supercuspidal representation $\varrho$ of $\GL_\ell(F)$
and label the vertices of $Q$ by $\varrho\lvert\det\rvert^i$, $i\in\Z$ with arrows $i\to i+1$,
so that the vertex set becomes the cuspidal line $\Z_\varrho=\{\varrho\lvert\det\rvert^i:i\in\Z\}$.
The set $\Comp$ is parametrized by multisegments in the sense of Zelevinsky.
By Zelevinsky's classification \cite{MR584084} multisegments also parametrize the set $\Irr_\varrho$ of
irreducible smooth complex representations of the groups $\GL_n(F)$, $n\ge0$, with cuspidal support
in $\Z_\varrho$.
Combining the two we obtain a bijection $C\mapsto\bpi(C)$ between $\Comp$ and $\Irr_\varrho$.

In \cite{MR4847251}, it was conjectured that $*$ is compatible with (normalized) parabolic induction $\times$.
More precisely, for all $C_1,C_2\in\Comp$
\begin{equation} \label{eq:LM}
\text{the representation $\bpi(C_1*C_2)$ is a subrepresentation of $\bpi(C_1)\times\bpi(C_2)$.}
\end{equation}
(See \cite{MR4847251}*{Conjecture 5.1}.)

An especially important class of irreducible components are those that contain an open orbit,
or equivalently, a \emph{rigid} module $x$, i.e., $\Ext^1_\Pi(x,x)=0$.
These components are called rigid.
On the representation-theoretic side, an irreducible $\pi$ is
\emph{$\sq$-irreducible} if $\pi\times\pi$ is irreducible.

Following \cite{MR2115084}*{Conjecture 5.3}, it was conjectured in \cite{MR4847251}*{Conjecture 3.4.C} that for every $C\in\Comp$,
\begin{equation}\label{eq:rigid-conj}
 C\ \text{is rigid} \iff\bpi(C)\ \text{is $\sq$-irreducible.}
\end{equation}

In the terminology of \cite{MR3866895}, we say that $C=\Cm{\m}$ is regular if no two segments of $\m$ share a (left or right) endpoint.
The equivalence \eqref{eq:rigid-conj} was verified in \cite{MR3866895} in the case that $C$ is regular,
in which a precise combinatorial characterization of rigidity was also given. 

In general, when $\pi_1$, $\pi_2$ are irreducible with at least one of them $\sq$-irreducible, the socle
$\soc(\pi_1\times\pi_2)$ is irreducible and occurs with multiplicity one in the Jordan--H\"older sequence of $\pi_1\times\pi_2$.
Thus, if $\bpi(C_1)$ or $\bpi(C_2)$ is $\sq$-irreducible, then \eqref{eq:LM} becomes
\begin{equation}\label{eq:socle-form}
 \SOC(C_1,C_2):\qquad\bpi(C_1*C_2)=\soc\bigl(\bpi(C_1)\times\bpi(C_2)\bigr).
\end{equation}
This is useful because $\bpi(C_1)\times\bpi(C_2)$ is irreducible if and only if both $\SOC(C_1,C_2)$ and $\SOC(C_2,C_1)$ hold.
The main result of \cite{MR4847251} is that if $C$ is regular and rigid, then $\SOC(C,D)$ and $\SOC(D,C)$ hold for every $D\in\Comp$.

A multiplicity free multisegment is one without repeated segments, and we
extend this terminology to irreducible components. Every regular
multisegment is multiplicity free. Our main result is the following.

\begin{theorem}[\cref{cor:main} and \cref{thm:mf-equivalence}]\label{thm:intro}
Let $C$ be a multiplicity free irreducible component. Then $C$ is rigid if
and only if $\bpi(C)$ is $\sq$-irreducible. In this case $\SOC(C,D)$ and
$\SOC(D,C)$ hold for every $D\in\Comp$.
\end{theorem}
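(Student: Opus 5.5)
The plan is to reduce to the regular case, where everything is already known by \cite{MR3866895} and \cite{MR4847251}. The reduction goes through a combinatorial \emph{regularization} of multisegments. Given a multiplicity free multisegment $\m$, I would perturb the endpoints to obtain a regular multisegment $\tilde\m$ on a finer cuspidal line. Concretely, segments of $\m$ that share a left endpoint are separated according to their right endpoints, and segments sharing a right endpoint according to their left endpoints. The perturbation must preserve the relative order data that governs both sides, namely the linkedness and precedence relations between pairs of segments. Multiplicity freeness is exactly what makes this well defined: identical segments could not be separated consistently.

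\textbf{Step 1 (geometric side).} I would show that $\Cm{\m}$ is rigid if and only if $\Cm{\tilde\m}$ is rigid. The tool is the description of the generic module of $\Cm{\m}$ as a direct sum of string modules attached to the segments. For multiplicity free $\m$ the generic module should be a sum of indecomposables indexed by the segments, and $\dim\Ext^1_\Pi$ between two of them depends only on the relative position of the endpoints of the corresponding segments. This dependence should be stable under the regularization.

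\textbf{Step 2 (representation side).} I would show that $\bpi(\m)$ is $\sq$-irreducible if and only if $\bpi(\tilde\m)$ is. Here I would use the Mœglin--Waldspurger/Leclerc-type criteria for irreducibility of $\pi\times\pi$ (e.g.\ via the ``$\mathrm{LI}$'' and matching criteria of Lapid--M\'inguez). These are phrased through bipartite graphs on pairs of segments in $\m\times\m$ defined by precedence, so they transfer to $\tilde\m$ once the precedence relations match. Combining Steps 1 and 2 with \eqref{eq:rigid-conj} in the regular case then gives the first assertion of \cref{thm:intro}.

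\textbf{Step 3 (socle formulas).} Fix $D=\Cm{\n}$ and $\pi=\bpi(C)$ $\sq$-irreducible. Since $\soc(\pi\times\bpi(D))$ is irreducible of multiplicity one, it suffices to identify its multisegment with that of $C*D$. I would compute both sides by compatible combinatorial recipes. On the geometric side, $C*D$ is computed by the Aizenbud--Lapid description via generic extensions. On the representation side, the socle can be peeled off segment by segment using the derivative/Jacquet-module arguments of \cite{MR4847251}. The natural move is induction on $\n$: write $D$ as $D'*\Cm{\Delta}$ for a single segment $\Delta$ (or via crystal operators $\tilde f_i$), and use associativity-type identities $C*(D'*\Cm{\Delta})=(C*D')*\Cm{\Delta}$ where they hold, together with $\soc(\pi\times\sigma\times\bpi(\Delta))$ manipulations. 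The main obstacle is that $D$ cannot be regularized simultaneously with $C$ without changing $C*D$. I would therefore try to prove, directly for multiplicity free rigid $C$, the two ``local'' ingredients used in \cite{MR4847251}: the combinatorial formula for $C*\Cm{\Delta}$, and the matching socle formula for $\pi\times\bpi(\Delta)$. I would do this by inspecting which steps of that paper's proof actually use regularity rather than multiplicity freeness, and expect the endpoint-coincidence cases there to require the most new work.
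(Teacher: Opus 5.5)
Your proposal has genuine gaps. The reduction to the regular case fails at Step~1, and the tools you invoke for Steps~1 and~2 do not exist in the form you need.

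\emph{Step 1.} The generic point of $\Cm{\m}$ is not a direct sum of modules indexed by the segments. Already for the multiplicity free $\m=[1,1]+[2,2]$ the reverse arrow $2\to1$ is generically nonzero, so the generic module is indecomposable; the module of \cref{sm:main} shows generic modules can be large indecomposables. Nor can rigidity be a pairwise condition on segments: every component with at most two segments is rigid, while non-rigid components exist.

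More seriously, tie-breaking cannot preserve the relevant structure. Rigidity of $\Cm{\m}$ is equivalent to surjectivity of $T_z\colon\End_Q(\Mq(\m))\to\Ext^1_Q(\Mq(\m),\Mq(\m))^*$ for generic $z$. By \eqref{eq:segment}, the source and target are governed by the relations $\Gamma\prec\Sh\Delta$ and $\Gamma\prec\Delta$. Two segments sharing an endpoint have a nonzero $\Hom_Q$ and no $\Ext^1_Q$ (e.g.\ $\Hom_Q(\Mq([a,b']),\Mq([a,b]))\ne0$ for $b<b'$). In a regular multisegment, by contrast, a nonzero $\Hom_Q$ between distinct intervals forces a nonzero $\Ext^1_Q$. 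Since you keep such a pair unlinked, it becomes strictly nested and the $\Hom_Q$ disappears. So $\tilde\m$ has genuinely different $Q$-module data, and invariance of rigidity would need a real argument, which you do not give.

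\emph{Step 2.} The matching criteria of \cite{MR3573961} concern products in which one factor is a ladder. They say nothing about $\Zel(\m)\times\Zel(\m)$ for general $\m$. Hence even the first assertion of \cref{thm:intro} is not established.

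\emph{Step 3.} This step carries the main content and is a plan, not a proof, and the induction on $D$ you sketch fails. The identity $C*(D'*\Cm{\Delta})=(C*D')*\Cm{\Delta}$ is available via \eqref{eq:associativity} only when $C\scom D'$ or $D'\scom\Cm{\Delta}$. On the representation side, \cref{item:si} covers only two factors, so $\bpi(C)\times\bpi(D')\times\bpi(\Delta)$ need not be SI and \eqref{eq:socle-assoc} is unavailable; for instance $\varrho\times\varrho\nu\times\varrho$ has a socle of length two.

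The missing idea is to factor $C$ rather than $D$, and to reduce to those $D$ commuting with a factor. The paper proceeds as follows.
\begin{enumerate}
\item Delete a \extremal{} segment, writing $C=A*\Cm{\Delta}$ with $A=\Cm{\m-\Delta}$ rigid and $A\scom C$ (\cref{prop:deletion}).
\item For arbitrary $D$, one has $D*A^{\oplus r}\scom A$ once $r\ge\mu(D,A)$ (\cref{cor:ext-strict}).
\item \Cref{lem:cancellation} with $B=A^{\oplus r}$ then reduces $\SOC(D,C)$ to the case $D\scom A$. There $(D*A)*\Cm{\Delta}$ can be reassociated (\cref{prop:propagation}).
\item $\SOC(C,D)$ comes from applying the same argument to $C^\vee$ together with \eqref{eq:L-R}, by induction on $\lvert C\rvert$ (\cref{thm:conditional}).
\end{enumerate}

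The converse also uses no regularization. For $\pi=\Zel(\m)$ $\sq$-irreducible, take an auxiliary ladder $\sigma$ with $\dfct(\sigma,\pi)=1$ and $\soc(\sigma\times\pi)=\eta\times\Zel(\m-\Delta)$ (\cref{lem:ladders}). The strict drop of the defect (\cref{lem:drop}) then gives that $\pi\times\Zel(\m-\Delta)$ is irreducible. Hence $\Zel(\m-\Delta)$ is $\sq$-irreducible by \cref{item:companion}, and by induction $A$ is rigid and \gd{}. So $C\scom A$ by \cref{lem:good-product}, and $C$ is rigid by \eqref{eq:rigid-product}.
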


Thus \eqref{eq:rigid-conj} holds for multiplicity free components, and \eqref{eq:LM} holds whenever $C_1$ or $C_2$ is multiplicity free and rigid.
This extends the results of \cites{MR3866895,MR4847251} from the regular case.
An algorithm for deciding rigidity in the multiplicity free case was given in \cite{LapidGreedy}. We will not use it here.

Let us say a few words about the proof.

A basic inductive procedure to prove $\sq$-irreducibility, which had already been used in the regular case, is based on the following fact \cite{MR4847251}*{Lemma 5.5}.
Let $\pi=\soc(\pi_1\times\pi_2)$ where $\pi_1,\pi_2$ are irreducible with $\pi_2$ $\sq$-irreducible, and assume that $\pi\times\pi_1$ is irreducible.
Then $\pi$ and $\pi_1$ are $\sq$-irreducible. An analogous property holds on the geometric side: if $0\to y\to z\to x\to0$ is an exact sequence of
$\Pi$-modules with $y$ rigid and $\Ext^1_\Pi(z,x)=0$, then $z$ and $x$ are rigid.

The main new idea is that this procedure propagates the property $\SOC(C,D)$ (\cref{prop:propagation}).
More precisely, let $C=A*B$, where $A$ is rigid and commutes with $C$.
Assume that $\bpi(A)$ and $\bpi(B)$ are $\sq$-irreducible, and that $\SOC(A,D)$, $\SOC(D,A)$ and $\SOC(D,B)$ hold for every $D\in\Comp$.
Then $\bpi(C)$ is $\sq$-irreducible and $\SOC(D,C)$ holds for every $D$. (For $\SOC(C,D)$ one would need a dual decomposition.)
For a multiplicity free multisegment such a factorization is obtained by deleting a segment $\Delta$
minimizing the sum of its endpoints. The forward direction of \cref{thm:intro} follows by considering both $C$ and its dual.


In turn, \cref{prop:propagation} is proved by reducing, under suitable conditions on $A$ and $C$, the condition $\SOC(D,C)$ to the case where 
$D$ commutes with $A$ -- see \Cref{cor:perpendicular}.

The converse part of \cref{thm:intro} is also proved by \extremal{} deletion. Let $\Delta$ be as before.
The crucial statement is that if $\pi=\Zel(\m)$ is $\sq$-irreducible then
$\tau=\Zel(\m-\Delta)$ is $\sq$-irreducible and $\pi\times\tau$ is irreducible (\cref{thm:realdelete}).
A key invariant is the defect $\dfct(\sigma,\tau)$, which is computed in terms of the order of vanishing at $s=0$ of the
composition of the standard intertwining operators between $\sigma\lvert\det\rvert^s\times\tau$ and $\tau\times\sigma\lvert\det\rvert^s$.
(It is the analogue of the invariant $\mathfrak d$ of Kang, Kashiwara, Kim and Oh for simple modules over quiver Hecke algebras, which is defined through renormalized $R$-matrices \cite{MR3758148}.)
Under suitable conditions $\dfct(\sigma,\tau)=0$ exactly when $\sigma\times\tau$ is irreducible.
A key property is that the defect drops strictly when passing to a socle (\cref{lem:drop}).
It then remains to construct an auxiliary representation $\sigma$ such that $\dfct(\sigma,\pi)=1$ and
$\soc(\sigma\times\pi)=\eta\times\tau$ for some irreducible representation $\eta$ (\cref{lem:ladders}).

In principle, the method of proof gives a finite deterministic algorithm
for computing $C*D$ for multiplicity free rigid $C$ and any $D$. While
theoretically significant, the algorithm is not particularly efficient, in
contrast to the regular rigid case where the recipe of \cite{MR4847251}*{Remark 8.3} is much faster.

It is natural to ask what the limits of the method are.
A proper rigid submodule $y\ne0$ of $z$ is called \dist{} if $\Ext^1_\Pi(z,z/y)=0$.
If every non-simple rigid $z\ne0$ admitted a \dist{} submodule, then we could show the forward direction of \eqref{eq:rigid-conj}
as well as the properties $\SOC(C,D)$ and $\SOC(D,C)$ for every rigid $C$.
Unfortunately, there exists an indecomposable rigid module in type $A_7$
with thirty interval occurrences which admits no \dist{} submodule (\cref{sm:main}).
This answers in the negative a question raised in \cite{MR4847234}*{Remark 8.5}.

The paper is organized as follows. \Cref{sec:prelim} collects notation and
the results of \cites{MR4847234,MR4847251} that we use. \Cref{sec:proof}
contains the propagation argument and the proof of the forward direction of \cref{thm:intro}.
\Cref{sec:converse} proves the converse part,
and \cref{sec:thirty} gives an example of a rigid module without a \dist{} submodule.

\section{Notation and preliminaries}\label{sec:prelim}

\subsection{Conventions}\label{subsec:conventions}
Throughout this paper we fix an algebraically closed field $K$. All
pertinent geometric objects (vector spaces, algebras, varieties,
morphisms, dimensions) are implicitly over $K$.
We will freely use the terminology and the results of \cite{MR4847234} and \cite{MR4847251}.

We take for $Q$ the quiver with vertex set $I=\Z$ and arrows $i\to i+1$.
We only consider finite-dimensional modules, so that
every construction takes place in a finite type $A$ subquiver, which we
may enlarge as necessary -- no statement depends on the choice. We write
$\Pi$ for the preprojective algebra of $Q$ and $\NV(\bd)$ for
Lusztig's nilpotent variety of $\Pi$-modules of a finitely supported dimension vector $\bd$,
acted on by $G_{\bd}=\prod_i\GL_{\bd_i}$. Let $\Comp$ be the disjoint union of the
sets of irreducible components of the $\NV(\bd)$, including the zero
component $0$. For $C\in\Comp$ with $C\subset\NV(\bd)$ we write $\lvert C\rvert=\sum_i\bd_i$.
For a $Q$-module or $\Pi$-module $M$ we write
$\gdim M=(\dim M_i)_{i\in I}$ for its graded dimension (its dimension
vector), and $\dim M=\sum_i\dim M_i$ for its dimension.

The Euler form
\[
 \langle\bd,\be\rangle_Q=\sum_i\bd_i\be_i-\sum_i\bd_i\be_{i+1},
\]
satisfies
\begin{equation}\label{eq:euler-Q}
 \dim\Hom_Q(L_1,L_2)-\dim\Ext^1_Q(L_1,L_2)=\langle\gdim L_1,\gdim L_2\rangle_Q
\end{equation}
for any $Q$-modules $L_1,L_2$.

A \emph{segment} is a set $[a,b]=\{a,a+1,\dots,b\}$ with $a\le b$ in $I$.
We write $\Mq([a,b])$ for the interval representation of $Q$, which is $K$
at the vertices $a,\dots,b$ and zero elsewhere, and whose nonzero arrow
maps are the identity of $K$. By Gabriel's theorem \cite{MR0332887}, the
$\Mq([a,b])$ are, up to isomorphism, the indecomposable $Q$-modules.
A \emph{multisegment} is an element
\[
 \m=\sum_{a\le b}m_{a,b}[a,b],\qquad m_{a,b}\in\Z_{\ge0},
\]
of the free abelian monoid generated by the segments. It is
\emph{multiplicity free} if $m_{a,b}\le1$ for all $a\le b$. We set
\[
 \Mq(\m)=\bigoplus_{a\le b}\Mq([a,b])^{\oplus m_{a,b}};
\]
thus $\m\mapsto \Mq(\m)$ is a bijection between multisegments and isomorphism
classes of $Q$-modules.
We shall use repeatedly the endpoint formulas
\begin{subequations}\label{eq:segment}
\begin{align}
 \dim\Hom_Q\bigl(\Mq([a,b]),\Mq([c,d])\bigr)
 &=\begin{cases}1,&c\le a\le d\le b,\\0,&\text{otherwise},\end{cases}
 \label{eq:segment-hom}\\
 \dim\Ext^1_Q\bigl(\Mq([a,b]),\Mq([c,d])\bigr)
 &=\begin{cases}1,&a<c\le b+1\le d,\\0,&\text{otherwise}.\end{cases}
 \label{eq:segment-ext}
\end{align}
In particular every interval module is a brick. For segments
$\Gamma=[c,d]$ and $\Delta=[a,b]$ we say that $\Gamma$ \emph{precedes}
$\Delta$, and write $\Gamma\prec\Delta$, if $c<a$, $d<b$ and $a\le d+1$. We write $\Sh\Delta=[a+1,b+1]$ for the shift of
$\Delta$. Then \eqref{eq:segment-hom} and
\eqref{eq:segment-ext} say that
\begin{align}
 \Ext^1_Q\bigl(\Mq(\Gamma),\Mq(\Delta)\bigr)\ne0
 &\iff\Gamma\prec\Delta,\label{eq:prec-ext}\\
 \Hom_Q\bigl(\Mq(\Delta),\Mq(\Gamma)\bigr)\ne0
 &\iff\Gamma\prec\Sh\Delta.\label{eq:prec-hom}
\end{align}
\end{subequations}

\begin{remark}[Interval summands of submodules]\label{lem:sub-intervals}
Let $N\subset M$ be $Q$-modules and let $d$ be a vertex.
\begin{enumerate}[label=\textup{(\roman*)},ref=\roman*]
\item\label{item:sub-count}
The number of interval summands of $N$ ending at $d$ is at most the number
of interval summands of $M$ ending at $d$.
\item\label{item:sub-start}
For every interval summand $[c,d]$ of $N$ there is an interval summand
$[a,d]$ of $M$ with $a\le c$.
\end{enumerate}
\end{remark}

Recall that $\Cm{\m}$, the closure of the conormal bundle to the
$G_{\bd}$-orbit of $\Mq(\m)$, sets up the bijection between multisegments and $\Comp$. 

Let 
\[
 (\bd,\be)=\langle\bd,\be\rangle_Q+\langle\be,\bd\rangle_Q,
\]
be the symmetrization of the Euler form.
We will constantly use the Hom--Ext identity of Crawley-Boevey
\cite{MR1781930}*{Lemma 1} and the Ext duality for $\Pi$, in the form
\cite{MR4847234}*{(2.7)--(2.9)}:
\begin{subequations}
\begin{align}
 \dim\Ext^1_\Pi(x,y)
 &=\dim\Hom_\Pi(x,y)+\dim\Hom_\Pi(y,x)-(\gdim x,\gdim y),\label{eq:cb}\\
 \Ext^1_\Pi(x,y)&\simeq\Ext^1_\Pi(y,x)^*.\label{eq:ext-duality}
\end{align}
In particular
\begin{equation}\label{eq:self-ext}
 \dim\Ext^1_\Pi(x,x)=2\dim\End_\Pi(x)-(\gdim x,\gdim x),
\end{equation}
\end{subequations}
and the function $(x,y)\mapsto\dim\Ext^1_\Pi(x,y)$ is upper
semicontinuous. For $X,Y\in\Comp$ we write, as in
\cite{MR4847234}*{\S\S2--3},
\[
 \ext^1_\Pi(X,Y)=\min_{x\in X,\,y\in Y}\dim\Ext^1_\Pi(x,y).
\]
Denote by $\Umin(X,Y)\subset X\times Y$ the locus where this minimum is attained.
It is a nonempty open subset of $X\times Y$.
We say that $X$ and $Y$ \emph{strongly commute}, and write $X\scom Y$, if $\ext^1_\Pi(X,Y)=0$. By \eqref{eq:ext-duality}
the relation $\scom$ is symmetric.

A module $x$ is \emph{rigid} if $\Ext^1_\Pi(x,x)=0$; a component is rigid if it contains a rigid module,
equivalently if it contains an open $G_{\bd}$-orbit, and we refer to a
point of that orbit as a rigid representative. For a rigid module $x$ we
write $\Orb{x}$ for the closure of its orbit, the rigid component of which $x$
is a rigid representative. Writing $\qf(\bd)=\langle\bd,\bd\rangle_Q$, so
that $(\bd,\bd)=2\qf(\bd)$, \eqref{eq:self-ext} shows that
\begin{equation}\label{eq:rigid-q}
 x\text{ is rigid}\iff\dim\End_\Pi(x)=\qf(\gdim x).
\end{equation}

For a $Q$-module $M$, the space of reverse arrow maps satisfying the
preprojective relations is canonically
\begin{equation}\label{eq:reverse-space}
 \Ext^1_Q(M,M)^*.
\end{equation}
See \cite{MR4847234}*{\S9} for more details. For a $\Pi$-module $z$ with
underlying $Q$-module $z_Q$ we write $\dat{z}\in\Ext^1_Q(z_Q,z_Q)^*$ for the
corresponding element, and call it the \emph{data} of $z$. For
$\Pi$-modules $z,z'$ we use the map of \cite{MR4847234}*{(9.1)}
\begin{equation}\label{eq:T-map}
 \Tmap{z}{z'}:\Hom_Q(z_Q,z'_Q)\longrightarrow\Ext^1_Q(z'_Q,z_Q)^*,\qquad
 \Tmap{z}{z'}(f)(\xi)=\dat{z}(f^*\xi)-\dat{z'}(f_*\xi),
\end{equation}
where $f^*\xi$ and $f_*\xi$ are the pullback and the pushforward of $\xi$
along $f$. By \cite{MR4847234}*{(9.3)}, $\Hom_\Pi(z,z')=\ker \Tmap{z}{z'}$. We
write $\Tmap{z}{}=\Tmap{z}{z}$.

The following structure will be used only in \cref{sec:thirty}.

For $t\in K^\times$, multiplying every reverse arrow by $t$ preserves the
preprojective relations, which are homogeneous of degree one in the
reverse arrows, and therefore defines an automorphism of $\Pi$. Twisting
by it gives an autoequivalence $z\mapsto z^{(t)}$ of the category of
$\Pi$-modules, which is the identity on morphisms (and in particular
preserves submodules). In terms of data, $z^{(t)}$ has underlying
$Q$-module $z_Q$ and data $t\,\dat{z}$. 
\begin{equation}\label{eq:rescale}
\text{If $z$ is rigid, then $z^{(t)}\simeq z$ for every $t\in K^\times$.}
\end{equation}
Indeed, the modules $z^{(t)}$ form an irreducible
curve in $\NV(\gdim z)$ through $z$, and they are rigid, since $z\mapsto z^{(t)}$ is an autoequivalence. 
Hence they all lie in the irreducible component of $z$, which has only one open orbit.

\subsection{The binary operation}\label{subsec:binary}
By definition $X*Y$ is the closure of the set of middle terms of the
exact sequences
\begin{equation}\label{eq:product}
 0\longrightarrow y\longrightarrow z\longrightarrow x\longrightarrow0,\qquad(x,y)\in\Umin(X,Y).
\end{equation}
By \cite{MR4847234}*{Theorem 3.1} $X*Y$ is an irreducible component.
Moreover, by \cite{MR4847234}*{Corollary 3.3}, if $X\scom Y$ then
\begin{subequations}
\begin{equation}\label{eq:strong-direct-sum}
 X*Y=Y*X=X\oplus Y,
\end{equation}
the closure of the direct sum locus.

Note that in general $X\nscom X$. If $X\scom X$, we set
\[
X^{\oplus r}=\overbrace{X\oplus\dots\oplus X}^r,\ \  r\ge0.
\]
We have
\begin{equation}\label{eq:rigid-powers}
 X^{\oplus r}*X^{\oplus s}=X^{\oplus(r+s)},\ \ r,s\ge0.
\end{equation}
\end{subequations}
If $X$ is rigid, in which case $X\scom X$, then $X^{\oplus r}$ is rigid:
if $x$ is a rigid representative for $X$, then $x^{\oplus r}$ is a rigid representative for $X^{\oplus r}$.

We caution that $X\scom X$ is possible even if $X$ is not rigid.
See \cite{MR4847234}*{Remark 4.3}, and \cite{MR4847251}*{Example 2.3}.

Following \cite{MR4847234}*{\S6} we say that $X$ and $Y$ \emph{weakly commute}
if $X*Y=Y*X$. Strong commutation implies weak commutation by
\eqref{eq:strong-direct-sum}, and the converse fails in general, as the
examples just cited show. However, by \cite{MR4847234}*{Proposition 6.1},
\begin{equation}\label{eq:weak-strong}
 \text{if $X$ or $Y$ is rigid, then $X*Y=Y*X$ if and only if $X\scom Y$,}
\end{equation}
and in that case we simply say that $X$ and $Y$ \emph{commute}. We use the shorter term whenever a
rigid component is present and there is no risk of confusion.

In general the operation $*$ is not associative. However, by \cite{MR4847234}*{Corollary 7.4},
\begin{subequations}\label{eq:assoc-all}
\begin{equation}\label{eq:associativity}
 (X*Y)*Z=X*(Y*Z)
\end{equation}
provided that $X\scom Y$ or $Y\scom Z$.
Combined with \eqref{eq:strong-direct-sum}, this allows a strongly
commuting pair to be interchanged inside a triple product: if $X\scom Y$
then
\begin{equation}\label{eq:swap-left}
 X*(Y*Z)=(X*Y)*Z=(Y*X)*Z=Y*(X*Z);
\end{equation}
similarly, if $Y\scom Z$ then
\begin{equation}\label{eq:swap-right}
 (X*Y)*Z=X*(Y*Z)=X*(Z*Y)=(X*Z)*Y.
\end{equation}
In each case the outer equalities are instances of
\eqref{eq:associativity}, for the pair $X,Y$ or $Y,Z$ taken in either
order, and the middle one is \eqref{eq:strong-direct-sum}.
\end{subequations}

Dualizing the vector spaces and the arrows, and reflecting the vertex set
($i\mapsto-i$), gives an exact contravariant autoequivalence $M\mapsto M^\vee$ of the
category of $Q$-modules (the reflection turns the dual quiver back into $Q$). It takes
$\Mq([a,b])$ to $\Mq([-b,-a])$, and hence $\Mq(\m)^\vee\simeq\Mq(\m^\vee)$, where for a
multisegment $\m=\sum_{a\le b}m_{a,b}[a,b]$ we put $\m^\vee=\sum_{a\le b}m_{a,b}[-b,-a]$.
Applying the same operation to the reverse arrows as well, we obtain an exact contravariant
autoequivalence $x\mapsto x^\vee$ of the category of $\Pi$-modules lifting it.
It reverses exact sequences, preserves rigidity, and induces an involution $C\mapsto C^\vee$ of $\Comp$ with
\begin{equation}\label{eq:geometric-duality}
 \Cm{\m}^\vee=\Cm{\m^\vee},\qquad (X*Y)^\vee=Y^\vee*X^\vee,\qquad
 \lvert C^\vee\rvert=\lvert C\rvert,
\end{equation}
which preserves rigidity as well as the number of distinct segments. Thus
every statement about $\Pi$-modules or components admits a dual counterpart.

The following upward propagation of rigidity will be key.

\begin{lemma}[\cite{MR4847234}*{Lemma 8.1}]\label{lem:ascending}
Let
\[
0\to y\to z\to x\to0
\]
be a short exact sequence of $\Pi$-modules. If $y$ is rigid and 
$\Ext^1_\Pi(z,x)=0$ then $x$ and $z$ are rigid. Dually, if
$x$ rigid and $\Ext^1_\Pi(y,z)=0$, then $y$ and $z$ are rigid.
\end{lemma}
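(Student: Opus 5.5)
Since $\Ext^1_\Pi(x,x)$ and $\dim\End_\Pi(x)$ are controlled by \eqref{eq:cb} and \eqref{eq:rigid-q}, the plan is a dimension count. We work with the first statement; the dual one then follows by applying $x\mapsto x^\vee$, which reverses exact sequences, preserves rigidity, and turns $\Ext^1_\Pi(y,z)$ into $\Ext^1_\Pi(z^\vee,y^\vee)$.

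So assume $y$ is rigid and $\Ext^1_\Pi(z,x)=0$. By \eqref{eq:ext-duality} this also gives $\Ext^1_\Pi(x,z)=0$. We apply long exact sequences to $0\to y\to z\to x\to0$:
\begin{itemize}
\item Applying $\Hom_\Pi(z,-)$ gives $\Ext^1(z,y)\to\Ext^1(z,z)\to\Ext^1(z,x)=0$, so $\Ext^1(z,y)\to\Ext^1(z,z)$ is surjective.
\item Applying $\Hom_\Pi(-,y)$ gives $\Ext^1(x,y)\to\Ext^1(z,y)\to\Ext^1(y,y)=0$, so $\Ext^1(x,y)\to\Ext^1(z,y)$ is surjective.
\item Applying $\Hom_\Pi(x,-)$ gives $\Ext^1(x,y)\to\Ext^1(x,z)=0\to\Ext^1(x,x)$. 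Hence $\Ext^1(x,y)\to\Ext^1(x,z)$ is surjective, but this alone says nothing about $\Ext^1(x,x)$.
\end{itemize}
The last step shows that a naive chase does not control the relevant terms. We therefore switch to a numerical argument based on \eqref{eq:cb}.

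Write $e(u,v)=\dim\Ext^1_\Pi(u,v)$ and $h(u,v)=\dim\Hom_\Pi(u,v)$, and set $d_x=\gdim x$, $d_y=\gdim y$, $d_z=d_x+d_y$. By bilinearity of $(\ ,\ )$,
\[
(d_z,d_z)=(d_z,d_x)+(d_z,d_y)=(d_x,d_x)+2(d_x,d_y)+(d_y,d_y).
\]
From \eqref{eq:cb}, $e(z,x)=h(z,x)+h(x,z)-(d_z,d_x)$, which vanishes by hypothesis.

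The aim is to show $e(z,z)\le e(z,x)+e(z,y)$ and $e(z,y)\le e(x,y)+e(y,y)$, and then to bound $e(x,y)$. Exactness in the middle of the two surjective sequences above only gives $e(z,z)\le e(z,y)$ and $e(z,y)\le e(x,y)$. These are not enough, since $e(x,y)$ need not vanish.

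The better route is through Hom dimensions. Rigidity of $z$ is equivalent to $2h(z,z)=(d_z,d_z)$, and we always have $2h(z,z)\ge(d_z,d_z)$. For the reverse inequality we bound $h(z,z)\le h(z,x)+h(z,y)$ using the left exact sequence $0\to\Hom(z,y)\to\Hom(z,z)\to\Hom(z,x)$. Similarly $h(z,z)\le h(y,z)+h(x,z)$ from $\Hom(-,z)$. Adding these,
\[
2h(z,z)\le\bigl[h(z,x)+h(x,z)\bigr]+h(z,y)+h(y,z)=(d_z,d_x)+h(z,y)+h(y,z).
\]

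It remains to show $h(z,y)+h(y,z)\le(d_z,d_y)$. This is equivalent to $e(z,y)=0$, which is the main obstacle. Applying $\Hom(y,-)$ gives $e(y,z)\le e(y,y)+e(y,x)=e(y,x)$, so we are reduced to handling $e(x,y)$.

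The intended trick is to use the extension class. The sequence $0\to y\to z\to x\to 0$ is a class $\xi\in\Ext^1(x,y)$, and the connecting map $\Hom(y,y)\to\Ext^1(x,y)$ sends $1\mapsto\xi$. Combined with $\Ext^1(x,z)=0$, the sequence $\Hom(x,x)\to\Ext^1(x,y)\to\Ext^1(x,z)=0$ shows that $\Ext^1(x,y)$ is spanned by the image of $\Hom(x,x)$. Hence $e(x,y)\le h(x,x)-h(x,z)+h(x,y)$, and the sequence $\Hom(z,-)$ gives the analogous count. Combining these inequalities with the identity for $(d_z,d_z)$ should telescope to $2h(z,z)\le(d_z,d_z)$, so $z$ is rigid. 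Then $x$ is rigid by the same count, using $\Ext^1(x,x)\hookleftarrow$ the relevant terms via $\Ext^1(z,x)=0$ and the surjection $\Ext^1(z,x)\to\Ext^1(y,x)$ dually.

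The delicate step is the bookkeeping showing that all the defect terms cancel. I would first verify it by writing every quantity through \eqref{eq:cb} and checking that the resulting sum equals $e(z,x)+e(y,y)=0$.
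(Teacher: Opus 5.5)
Your proposal has a genuine gap: the rigidity of $z$ is never established. The decisive step is left as ``should telescope'', and it cannot be carried out as a dimension count.

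First, the goal you reduce to, $h(z,y)+h(y,z)\le(d_z,d_y)$, i.e.\ $\Ext^1_\Pi(z,y)=0$, is false under the hypotheses. Take $y=S_i$ and $x=S_{i-1}\oplus S_{i+1}$. Let $z$ have basis $v_{i-1},v_i,v_{i+1}$, with the arrow $i-1\to i$ sending $v_{i-1}\mapsto v_i$, the reverse arrow $i+1\to i$ sending $v_{i+1}\mapsto v_i$, and all other maps zero. By \eqref{eq:cb}:
\begin{itemize}
\item $\dim\Ext^1_\Pi(z,x)=2+0-2=0$, so the hypotheses hold, and $z$ is rigid since $2\cdot1-2=0$;
\item but $\dim\Ext^1_\Pi(z,y)=0+1-0=1$ and $\dim\Ext^1_\Pi(x,y)=2$.
\end{itemize}
Your bound $2h(z,z)\le(d_z,d_x)+h(z,y)+h(y,z)$ reads $2\le3$ while $(d_z,d_z)=2$, so the slack it loses is never recovered. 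Since $\Ext^1_\Pi(y,x)\ne0$ here, the ``surjection $\Ext^1(z,x)\to\Ext^1(y,x)$'' in your last sentence does not exist either.

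Second, and more fundamentally, no bookkeeping of dimensions can work. Consider the values
\[ h(x,x)=h(z,x)=h(y,x)=2,\quad h(y,y)=h(z,y)=1,\quad h(y,z)=h(z,z)=3,\quad h(x,y)=h(x,z)=0, \]
with $(d_x,d_x)=(d_y,d_y)=2$ and $(d_x,d_y)=0$. They satisfy:
\begin{itemize}
\item \eqref{eq:cb} and the hypotheses $e(y,y)=e(z,x)=0$;
\item your count $e(x,y)=h(x,x)-h(x,z)+h(x,y)$;
\item all rank conditions of the six long exact sequences, truncated at the vanishing $\Ext^1$'s.
\end{itemize}
Yet they give $e(z,z)=e(x,x)=2$.

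What is missing is the vanishing of a specific composite map. Let $\iota\colon y\to z$ and $p\colon z\to x$, and let $\xi\in\Ext^1_\Pi(x,y)$ be the class of the sequence. You already have three surjections:
\begin{itemize}
\item $f\mapsto f^*\xi$ from $\End_\Pi(x)$ onto $\Ext^1_\Pi(x,y)$, because $\Ext^1_\Pi(x,z)=0$;
\item $p^*\colon\Ext^1_\Pi(x,y)\to\Ext^1_\Pi(z,y)$, because $\Ext^1_\Pi(y,y)=0$;
\item $\iota_*\colon\Ext^1_\Pi(z,y)\to\Ext^1_\Pi(z,z)$, because $\Ext^1_\Pi(z,x)=0$.
\end{itemize}
Hence every element of $\Ext^1_\Pi(z,z)$ has the form $\iota_*p^*f^*\xi=\iota_*\bigl((fp)^*\xi\bigr)$. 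Now $(fp)^*\xi$ is the image of $fp\in\Hom_\Pi(z,x)$ under the connecting map of $\Hom_\Pi(z,-)$, and $\iota_*$ kills that image by exactness. So $\Ext^1_\Pi(z,z)=0$.

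For $x$, consider the connecting map $\Hom_\Pi(y,x)\to\Ext^1_\Pi(x,x)$, $g\mapsto g_*\xi$. It is onto because $\Ext^1_\Pi(z,x)=0$. It is zero because each $g$ lifts to some $\tilde g\colon y\to z$ (as $\Ext^1_\Pi(y,y)=0$), whence $g_*\xi=p_*(\tilde g_*\xi)$ with $\tilde g_*\xi\in\Ext^1_\Pi(x,z)=0$. Alternatively, the long exact sequences together with \eqref{eq:cb} do give $\dim\Ext^1_\Pi(x,x)=\dim\Ext^1_\Pi(z,z)$ under the hypotheses, so $x$ follows numerically once $z$ is settled.

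Your reduction of the dual statement via $x\mapsto x^\vee$ is fine.
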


On the level of components, \cite{MR4847234}*{Corollary 8.2} gives
\begin{equation}\label{eq:rigid-product}
 \text{if $C=C_1*C_2$ with $C_2$ rigid and $C\scom C_1$, then $C$ is rigid if and only if $C_1$ is.}
\end{equation}

\subsection{Representations}\label{subsec:reps}
Let $F$ be a non-archimedean local field. All representations of the groups
$\GL_n(F)$ occurring below are smooth and defined over $\C$. Recall that
normalized parabolic induction $\times$ is biexact, faithful and associative.
Here, for representations $\pi_i$ of $\GL_{n_i}(F)$, $i=1,2$,
$\pi_1\times\pi_2$ is the representation of $\GL_{n_1+n_2}(F)$ obtained by
normalized parabolic induction of $\pi_1\otimes\pi_2$ from the standard
parabolic subgroup with Levi part $\GL_{n_1}(F)\times\GL_{n_2}(F)$.

Thus $\times$, together with induction in stages, endows the direct sum over
$n\ge0$ of the categories of finite length representations of $\GL_n(F)$ with the
structure of a ring category over $\C$ in the sense of \cite{MR3242743}*{Definition 4.2.3},
whose unit object $\one$ is the unit representation of $\GL_0(F)=\{1\}$; its
Grothendieck group is a commutative ring, although $\times$ is not commutative.

For a representation $\pi$ of finite length we recall from
\cite{MR3573961}*{Definition 2.3} that $\pi$ is \emph{socle irreducible},
abbreviated SI, if $\soc(\pi)$ is irreducible and occurs with multiplicity one
in the Jordan--H\"older sequence of $\pi$; see also \cite{MR4169050}*{Definition 1}.
Clearly, cf.~\cite{MR3573961}*{Remark 2.4},
\begin{equation} \label{eq:si-sub}
\text{if $\pi$ is SI and $0\ne\pi'\subseteq\pi$, then $\pi'$ is SI and
$\soc(\pi')=\soc(\pi)$.}
\end{equation}

Following \cite{MR3866895} we call an irreducible representation $\pi$
\emph{$\sq$-irreducible} if $\pi\times\pi$ is irreducible.

The results about $\sq$-irreducible representations that we use are the
following \cite{MR4847251}*{Theorems 4.1.B and 4.1.D, Lemma 5.5}; see also
\cites{MR3314831,MR3866895}.

\begin{theorem}[\cite{MR4847251}*{Theorems 4.1.B and 4.1.D, Lemma 5.5}]\label{thm:sq-input}
Let $\sigma$ and $\tau$ be irreducible representations.
\begin{enumerate}[label=\textup{(R\arabic*)},ref=R\arabic*]
\item\label{item:si}
If at least one of $\sigma,\tau$ is $\sq$-irreducible, then
$\sigma\times\tau$ and $\tau\times\sigma$ are SI.
\item\label{item:irr}
Under the same hypothesis, $\sigma\times\tau$ is irreducible if and only if
$\sigma\times\tau\simeq\tau\times\sigma$ if and only if $\soc(\sigma\times\tau)\simeq\soc(\tau\times\sigma)$.
\item\label{item:product}
If $\sigma$ and $\tau$ are both $\sq$-irreducible and $\sigma\times\tau$ is
irreducible, then $\sigma\times\tau$ is $\sq$-irreducible.
\item\label{item:inj}
If $\sigma$ is $\sq$-irreducible, then each of $\soc(\tau\times\sigma)$ and
$\soc(\sigma\times\tau)$ determines $\tau$ up to isomorphism.
\item\label{item:companion}
If $\tau$ is $\sq$-irreducible, $\pi=\soc(\sigma\times\tau)$ and
$\pi\times\sigma$ is irreducible, then $\sigma$ and $\pi$ are
$\sq$-irreducible.
\end{enumerate}
\end{theorem}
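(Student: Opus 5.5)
These assertions are quoted from \cite{MR4847251} (building on \cites{MR3314831,MR3866895}), and the argument I would give follows the Kang--Kashiwara--Kim--Oh renormalized $R$-matrix method, transported to $p$-adic groups via standard intertwining operators. For irreducible $\sigma,\tau$, let $M(s)\colon\sigma\lvert\det\rvert^s\times\tau\to\tau\times\sigma\lvert\det\rvert^s$ be the standard intertwining operator, meromorphic in $s$. Let $R_{\sigma,\tau}\colon\sigma\times\tau\to\tau\times\sigma$ be the leading coefficient of its Laurent expansion at $s=0$, which is a nonzero intertwiner.

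The plan rests on two basic facts.
\begin{enumerate}
\item[(a)] Compatibility of these operators with induction in stages (the cocycle/braid relation). For example, $R_{\sigma,\tau_1\times\tau_2}$ agrees with $(\mathrm{id}\times R_{\sigma,\tau_2})\circ(R_{\sigma,\tau_1}\times\mathrm{id})$ up to a scalar whenever the orders of vanishing add up; in general the orders satisfy a subadditivity inequality.
\item[(b)] If $\sigma$ is $\sq$-irreducible, then $\sigma\times\sigma$ is irreducible, so by Schur's lemma $R_{\sigma,\sigma}$ is a scalar.
\end{enumerate}

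With these, I would prove \eqref{item:si} as follows; by the contragredient/duality symmetry it suffices to treat $\sigma\times\tau$ with $\sigma$ $\sq$-irreducible. Let $X\subseteq\tau\times\sigma$ be any nonzero subrepresentation. Form $\sigma\times X\subseteq\sigma\times\tau\times\sigma$ and apply $R_{\sigma,\tau\times\sigma}$, factored via (a) as $R_{\sigma,\tau}\times\mathrm{id}$ followed by $\mathrm{id}\times R_{\sigma,\sigma}$. By (b) the second factor is a scalar, so the composite is essentially $R_{\sigma,\tau}\times\mathrm{id}_\sigma$. Frobenius reciprocity together with this factorization then shows that $X$ contains the image $\Im R_{\sigma,\tau}$. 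It follows that $\soc(\tau\times\sigma)=\Im R_{\sigma,\tau}$ and that this image is simple; dually, it is also the cosocle of $\sigma\times\tau$.

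Multiplicity one of the socle is the step I expect to be the main obstacle. My first attempt would be to show $\dim\Hom\bigl(\sigma\times\tau,\tau\times\sigma\bigr)=1$ using the same scalar argument. Then, if $\Im R_{\sigma,\tau}$ occurred twice as a constituent, I would construct a second, independent intertwiner by composing with the multiplicity, giving a contradiction. If this fails, I would fall back on a Jacquet-module computation via the geometric lemma, counting occurrences of $\sigma\otimes\tau$.

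The remaining items then follow.
\begin{enumerate}
\item[\eqref{item:irr}] If $\soc(\sigma\times\tau)\simeq\soc(\tau\times\sigma)$, then by the SI property this socle is simultaneously the socle and the cosocle of $\sigma\times\tau$ and occurs in it with multiplicity one. Hence $\sigma\times\tau$ is irreducible. The other implications are clear.
\item[\eqref{item:inj}] Frobenius reciprocity recovers $\tau$ from $\soc(\sigma\times\tau)$, namely as the unique $\tau$ with $\sigma\otimes\tau$ in the Jacquet module with the appropriate maximality, using SI.
\item[\eqref{item:product}] By \eqref{item:irr}, $\sigma\times\tau\simeq\tau\times\sigma$. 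Associativity then rearranges $(\sigma\times\tau)^{\times 2}$ as $\sigma\times\sigma\times\tau\times\tau$, which is irreducible by \eqref{item:irr} applied to $\sigma\times\sigma$ and $\tau\times\tau$. Irreducibility of the full product is checked by showing that socle and cosocle coincide with multiplicity one.
\item[\eqref{item:companion}] Here I would argue with the orders of vanishing (the defect). The irreducibility of $\pi\times\sigma$ with $\pi\hookrightarrow\sigma\times\tau$ forces the relevant $R$-matrices on $\sigma\times\sigma$ and on $\pi\times\pi$ to be scalars, via the subadditivity in (a). This yields $\sq$-irreducibility of both $\sigma$ and $\pi$.
\end{enumerate}
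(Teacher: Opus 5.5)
The paper gives no proof of this statement; it imports it from \cite{MR4847251}, so your sketch has to stand on its own. Your strategy, the Kang--Kashiwara--Kim--Oh $R$-matrix argument run with the operators $\Rlead$, is the right one. The gaps sit exactly at the non-formal steps.

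\textbf{Simple socle in \cref{item:si}.} The sentence ``Frobenius reciprocity together with this factorization shows that $X$ contains the image'' hides two steps.
\begin{enumerate}[label=\textup{(\arabic*)}]
\item Write $\Rlead_{\sigma,\tau\times\sigma}=c\,(\Rlead_{\sigma,\tau}\times\mathrm{id}_\sigma)$, with $c$ the scalar $\Rlead_{\sigma,\sigma}$. Its restriction to $\sigma\times X$ is either $\Rlead_{\sigma,X}$, with values in $X\times\sigma$, or zero. It is zero exactly when $\sigma\times X\subset(\ker\Rlead_{\sigma,\tau})\times\sigma$, and you never rule this out.
\item Even knowing $(\Rlead_{\sigma,\tau}\times\mathrm{id}_\sigma)(\sigma\times X)\subset X\times\sigma$, that is $\sigma\times X\subset\Rlead_{\sigma,\tau}^{-1}(X)\times\sigma$, you cannot simply cancel the factor $\sigma$.
\end{enumerate}
Both steps need a Mackey-type cancellation lemma, the $p$-adic counterpart of the key submodule lemma of Kang--Kashiwara--Kim--Oh, proved with the geometric lemma. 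It says: if $A\subset\sigma\times\tau$ and $B\subset\tau\times\sigma$ satisfy $\sigma\times B\subset A\times\sigma$, then there is $\tau'\subset\tau$ with $B\subset\tau'\times\sigma$ and $\sigma\times\tau'\subset A$. Apply it with $A=\ker\Rlead_{\sigma,\tau}$ for the first step and with $A=\Rlead_{\sigma,\tau}^{-1}(X)$ for the second. In both cases irreducibility of $\tau$ forces $\tau'\in\{0,\tau\}$, which gives the conclusion.

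\textbf{Multiplicity one in \cref{item:si}.} Your proposed route does not work. Even granting $\dim\Hom(\sigma\times\tau,\tau\times\sigma)=1$, a second copy of $\omega=\operatorname{Im}\Rlead_{\sigma,\tau}$ sitting inside $\ker\Rlead_{\sigma,\tau}$ produces no second intertwiner. What works is a separation by pole orders, as in the proof of \cref{lem:drop}. The operator $\Rlead_{\sigma,\sigma\times\tau}=c\,(\mathrm{id}_\sigma\times\Rlead_{\sigma,\tau})$ kills $\sigma\times\ker\Rlead_{\sigma,\tau}$. Hence every subquotient $\kappa$ of $\ker\Rlead_{\sigma,\tau}$ has $r_{\sigma,\kappa}<r_{\sigma,\sigma}+r_{\sigma,\tau}$. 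On the other hand, the non-vanishing from step (1), applied to $X=\omega$, gives $r_{\sigma,\omega}=r_{\sigma,\sigma}+r_{\sigma,\tau}$. So $\omega$ cannot occur in the kernel.

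\textbf{The remaining items.} They inherit these gaps and add their own.
\begin{itemize}
\item Your proof of \cref{item:irr} uses the multiplicity-one part of \cref{item:si}.
\item In \cref{item:product} you cannot apply \cref{item:irr} to the pair $\sigma\times\sigma$, $\tau\times\tau$, since neither is known to be $\sq$-irreducible. Apply it instead with the $\sq$-irreducible factor $\tau$: first to $\tau$ and $\sigma\times\sigma$, then to $\tau$ and $\sigma\times\sigma\times\tau$. The needed commutations follow from $\sigma\times\tau\simeq\tau\times\sigma$.
\item \Cref{item:inj} is asserted rather than proved. Frobenius reciprocity only says that $\soc(\sigma\times\tau)\hookrightarrow\sigma\times\tau'$ if and only if $\sigma\otimes\tau'$ is a quotient of the corresponding Jacquet module. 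The unspecified ``maximality'' that singles out $\tau$ is precisely what has to be shown.
\item \Cref{item:companion} has no working mechanism. Subadditivity only bounds pole orders from above. From $\pi\subset\sigma\times\tau$ you get $r_{\pi,\sigma}\le r_{\sigma,\sigma}+r_{\tau,\sigma}$ and $r_{\sigma,\pi}\le r_{\sigma,\sigma}+r_{\sigma,\tau}$, hence $\dfct(\pi,\sigma)\le\dfct(\sigma,\sigma)+\dfct(\sigma,\tau)$. This is vacuous, because $\dfct(\pi,\sigma)=0$ when $\pi\times\sigma$ is irreducible. Nothing in your sketch supplies the lower bound that would force $\Rlead_{\sigma,\sigma}$ or $\Rlead_{\pi,\pi}$ to be scalar. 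Even a scalar $\Rlead_{\sigma,\sigma}$ yields irreducibility of $\sigma\times\sigma$ only after rerunning the simple-cosocle argument with $\tau=\sigma$.
\end{itemize}
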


We shall use the following representation-theoretic counterparts of
\eqref{eq:assoc-all}. Let $\pi_1,\pi_2,\pi_3$ be irreducible. By
\eqref{eq:si-sub}, if $\pi_1\times\pi_2\times\pi_3$ is SI then
\begin{subequations}\label{eq:socle-all}
\begin{equation}\label{eq:socle-assoc}
 \soc\bigl(\soc(\pi_1\times\pi_2)\times\pi_3\bigr)
 =\soc\bigl(\pi_1\times\soc(\pi_2\times\pi_3)\bigr)
 =\soc(\pi_1\times\pi_2\times\pi_3).
\end{equation}
If $\pi_1$ and $\pi_2$ are $\sq$-irreducible and $\pi_1\times\pi_2$ is
irreducible, then
\begin{equation}\label{eq:rep-swap-left}
 \soc\bigl(\pi_1\times\soc(\pi_2\times\pi_3)\bigr)
 =\soc(\pi_1\times\pi_2\times\pi_3)
 =\soc(\pi_2\times\pi_1\times\pi_3)
 =\soc\bigl(\pi_2\times\soc(\pi_1\times\pi_3)\bigr);
\end{equation}
similarly if $\pi_2$ and $\pi_3$ are $\sq$-irreducible and $\pi_2\times\pi_3$
is irreducible, then
\begin{equation}\label{eq:rep-swap-right}
 \soc\bigl(\soc(\pi_1\times\pi_2)\times\pi_3\bigr)
 =\soc(\pi_1\times\pi_2\times\pi_3)
 =\soc(\pi_1\times\pi_3\times\pi_2)
 =\soc\bigl(\soc(\pi_1\times\pi_3)\times\pi_2\bigr).
\end{equation}
\end{subequations}
Indeed, under the hypothesis of \eqref{eq:rep-swap-left} we have
$\pi_1\times\pi_2\simeq\pi_2\times\pi_1$ by \cref{item:irr}, and this
representation is $\sq$-irreducible by \cref{item:product}. Hence
$\pi_1\times\pi_2\times\pi_3\simeq\pi_2\times\pi_1\times\pi_3$ is SI by
\cref{item:si}; the outer equalities in \eqref{eq:rep-swap-left} are
\eqref{eq:socle-assoc} for the triples $\pi_1,\pi_2,\pi_3$ and
$\pi_2,\pi_1,\pi_3$, and the middle one is the isomorphism. The proof of
\eqref{eq:rep-swap-right} is the same, with
$\pi_2\times\pi_3\simeq\pi_3\times\pi_2$ in place of
$\pi_1\times\pi_2\simeq\pi_2\times\pi_1$. All the inner socles occurring
here are irreducible by \cref{item:si}.

Fix now a supercuspidal representation $\varrho$ of $\GL_\ell(F)$ and write
\begin{equation}\label{eq:line}
 \Z_\varrho=\{\varrho\lvert\det\rvert^i:i\in\Z\}
\end{equation}
for the corresponding cuspidal line. We consider the representations whose
cuspidal support lies in $\Z_\varrho$.

We label the vertex $i\in I$ of $Q$ by $\varrho\lvert\det\rvert^i$, which
identifies $I$ with $\Z_\varrho$. Under this identification a segment
$[a,b]\subseteq I$ becomes the set
\[
 [a,b]_\varrho=\{\varrho\lvert\det\rvert^a,\varrho\lvert\det\rvert^{a+1},\dots,
 \varrho\lvert\det\rvert^b\},
\]
which is precisely a segment in the sense of \cite{MR584084}*{\S3.1}, a
multisegment $\m$ becomes a multisegment $\m_\varrho$ in the sense of
\cite{MR584084}*{\S6}, and the relation $\prec$ becomes the relation
``precedes'' of \cite{MR584084}*{\S4.1}. Let $\Zel([a,b]_\varrho)$ be the
unique irreducible subrepresentation of
$\varrho\lvert\det\rvert^a\times\dots\times\varrho\lvert\det\rvert^b$, and
for $\m=\Delta_1+\dots+\Delta_k$, ordered so that $\Delta_i$ does not
precede $\Delta_j$ whenever $i<j$, let $\Zel(\m_\varrho)$ be the unique
irreducible subrepresentation of
$\Zel((\Delta_1)_\varrho)\times\dots\times \Zel((\Delta_k)_\varrho)$
\cite{MR584084}*{\S3.1 and Theorem 6.1}. Under Zelevinsky's
classification, the bijection of \cref{sec:intro} is
$\bpi_\varrho(\Cm{\m})=\Zel(\m_\varrho)$, with $\bpi_\varrho(0)=\one$.

The notions introduced below are defined with reference to $\varrho$ via the bijection $\bpi_\varrho$.
All our results hold for every $\varrho$, with proofs that are uniform in $\varrho$, and the only place where two cuspidal
lines interact is the duality \eqref{eq:pi-duality}, which exchanges
$\varrho$ and $\varrho^\vee$. We therefore suppress $\varrho$ from the
notation and write $[a,b]$, $\Zel(\m)$ and $\bpi$ for $[a,b]_\varrho$,
$\Zel(\m_\varrho)$ and $\bpi_\varrho$, except in \cref{subsec:duality}.

It will be convenient to introduce the following terminology.\footnote{In \cite{MR4847251}*{Definition 5.6}
a different terminology was used, without requiring $\sq$-irreducibility.}
\begin{definition}\label{def:good}
Let $C\in\Comp$ be such that $\bpi(C)$ is $\sq$-irreducible. We say that $C$ is
\begin{enumerate}[label=\textup{(\roman*)},ref=\roman*]
\item\label{item:Lgood}
\emph{\Lgd} if $\SOC(C,D)$ for every $D\in\Comp$;
\item\label{item:Rgood}
\emph{\Rgd} if $\SOC(D,C)$ for every $D\in\Comp$;
\item\label{item:good}
\emph{\gd} if it is both \Lgd{} and \Rgd{}.
\end{enumerate}
\end{definition}

Thus $\sq$-irreducibility of $\bpi(C)$ is part of the definition, and each of
the hypotheses ``\gd{}'', ``\Lgd{}'' and ``\Rgd{}'' below carries it. By \cref{item:si} the
induced representations occurring in \cref{def:good} are SI.

Recall finally that a component is \emph{basic} if it is $\Cm{[a,b]}$ or
$\Cm{[a,a]+[a+1,a+1]+\dots+[b,b]}$ for some $a\le b$. The corresponding
representations are $\Zel([a,b])$ and $L([a,b])$, the unique irreducible
subrepresentation of
$\varrho\lvert\det\rvert^b\times\dots\times\varrho\lvert\det\rvert^a$
\cite{MR584084}*{\S9.1}.
Basic components are rigid by \cite{MR4847234}*{\S4} and \gd{} by \cite{MR4847251}*{Corollary 5.10 and \S7.1}.

More generally, every regular rigid component is \gd{} \cite{MR4847251}*{Theorems 6.4 and 6.5}.
We shall only use this for ladders, that is, multisegments $[a_1,b_1]+\dots+[a_k,b_k]$ with
$a_1<\dots<a_k$ and $b_1<\dots<b_k$ \cite{MR3163355}: by \cite{MR4847251}*{\S2.5.1 and Theorem 6.5},
\begin{equation}\label{eq:regular}
 \text{if $\m$ is a ladder, then $\Cm{\m}$ is rigid and \gd{}.}
\end{equation}

The following is essentially \cite{MR4847251}*{Lemma 5.8}.

\begin{lemma}\label{lem:good-product}
Let $B$ be \gd{}.
\begin{subequations}
\begin{enumerate}[label=\textup{(\roman*)},ref=\roman*]
\item\label{item:good-commutation}
If $X\scom B$, then
\begin{equation}\label{eq:good-irreducible}
 \bpi(B)\times\bpi(X)=\bpi(X)\times\bpi(B)=\bpi(B*X)
\end{equation}
is irreducible. If moreover $X$ is \gd{}, then $B*X$ is \gd{}.
\item\label{item:good-power}
If $B$ is rigid, then $B^{\oplus r}$ is \gd{} and
\begin{equation}\label{eq:power}
 \bpi(B^{\oplus r})=\bpi(B)^{\times r}
\end{equation}
is irreducible and $\sq$-irreducible for every $r\ge0$.
\item\label{item:good-converse}
If $B$ is rigid and $\bpi(B)\times\bpi(X)$ is irreducible, then
$X\scom B$. Thus, for rigid $B$, $\bpi(B)\times\bpi(X)$ is irreducible if
and only if $X\scom B$.
\end{enumerate}
\end{subequations}
\end{lemma}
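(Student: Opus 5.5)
The three parts will be derived from the acceptability of $B$, the strong-commutation identity \eqref{eq:strong-direct-sum} and the SI properties of \cref{thm:sq-input}.

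For \cref{item:good-commutation}, since $X\scom B$, \eqref{eq:strong-direct-sum} gives $B*X=X*B$. $B$ is \gd{}, so $\SOC(B,X)$ and $\SOC(X,B)$ hold:
\[
\bpi(B*X)=\soc\bigl(\bpi(B)\times\bpi(X)\bigr),\qquad \bpi(X*B)=\soc\bigl(\bpi(X)\times\bpi(B)\bigr).
\]
These socles therefore coincide. Since $\bpi(B)$ is $\sq$-irreducible, \cref{item:irr} shows that $\bpi(B)\times\bpi(X)\simeq\bpi(X)\times\bpi(B)$ is irreducible, and it equals $\bpi(B*X)$.

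Suppose moreover that $X$ is \gd{}. Then $\bpi(B*X)=\bpi(B)\times\bpi(X)$ is $\sq$-irreducible by \cref{item:product}. Let $D\in\Comp$. We use \eqref{eq:swap-left}, which needs only $B\scom X$, and \eqref{eq:associativity} to write $(B*X)*D=B*(X*D)$. Applying $\SOC(X,D)$ and then $\SOC(B,X*D)$ gives
\[
\bpi((B*X)*D)=\soc\bigl(\bpi(B)\times\soc(\bpi(X)\times\bpi(D))\bigr).
\]
By \eqref{eq:socle-assoc} this equals $\soc(\bpi(B)\times\bpi(X)\times\bpi(D))=\soc(\bpi(B*X)\times\bpi(D))$. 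The SI hypothesis needed there holds because $\bpi(B)\times\bpi(X)$ is $\sq$-irreducible, so \cref{item:si} applies to it times $\bpi(D)$. The right version $\SOC(D,B*X)$ follows symmetrically, using $D*(X*B)=(D*X)*B$ from \eqref{eq:associativity} together with $B*X=X*B$.

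For \cref{item:good-power}, $B$ is rigid, so $B\scom B$. We induct on $r$ using \eqref{eq:rigid-powers}: $B^{\oplus r}=B*B^{\oplus(r-1)}$ with $B\scom B^{\oplus(r-1)}$, since $B^{\oplus(r-1)}$ is rigid and contains $b^{\oplus(r-1)}$ with $b$ a rigid representative of $B$, and $\Ext^1(b,b^{\oplus(r-1)})=0$. The case $r=0$ is trivial and $r=1$ is the hypothesis. For the inductive step, \cref{item:good-commutation} applied with $X=B^{\oplus(r-1)}$, which is \gd{} by induction, gives that $B^{\oplus r}$ is \gd{} and $\bpi(B^{\oplus r})=\bpi(B)\times\bpi(B)^{\times(r-1)}$ is irreducible. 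It is $\sq$-irreducible because \gd{} includes this property.

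For \cref{item:good-converse}, suppose $\bpi(B)\times\bpi(X)$ is irreducible. By \cref{item:irr} we have $\soc(\bpi(B)\times\bpi(X))=\soc(\bpi(X)\times\bpi(B))$. Then $\SOC(B,X)$ and $\SOC(X,B)$ give $\bpi(B*X)=\bpi(X*B)$, and since $\bpi$ is a bijection, $B*X=X*B$. Because $B$ is rigid, \eqref{eq:weak-strong} yields $X\scom B$. The converse direction is \cref{item:good-commutation}.

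The main point needing care is the associativity bookkeeping in the \gd{} claim of \cref{item:good-commutation}: one must check that the geometric triple-product rearrangement and the SI-based socle associativity \eqref{eq:socle-assoc} are each applied under their stated hypotheses. Everything else is a direct citation.
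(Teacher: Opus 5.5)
Your proposal is correct and follows the paper's argument: strong commutation together with $\SOC(B,X)$, $\SOC(X,B)$ and \cref{item:irr} for (i), induction on $r$ via (i) for (ii), and the socle identities for $B$ plus \eqref{eq:weak-strong} for (iii). You also spell out the associativity and SI bookkeeping behind the acceptability of $B*X$, which the paper only sketches. The appeal to \eqref{eq:swap-left} there is superfluous, since \eqref{eq:associativity} alone gives $(B*X)*D=B*(X*D)$.
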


Indeed, if $X\scom B$ then $B*X=X*B=B\oplus X$ by
\eqref{eq:strong-direct-sum}, so $\SOC(B,X)$ and $\SOC(X,B)$ give
$\soc\bigl(\bpi(B)\times\bpi(X)\bigr)=\soc\bigl(\bpi(X)\times\bpi(B)\bigr)=\bpi(B*X)$,
and \cref{item:irr} gives \eqref{eq:good-irreducible}. If moreover $X$ is
\gd{}, then $\bpi(B*X)$ is $\sq$-irreducible by \cref{item:product}, and the
socle identities for $B*X$ follow from those for $B$ and $X$
using \eqref{eq:associativity} and \eqref{eq:socle-assoc}.
\Cref{item:good-power} follows from \cref{item:good-commutation} by induction on $r$.
For \cref{item:good-converse}, if $\bpi(B)\times\bpi(X)$ is
irreducible, it is isomorphic to $\bpi(X)\times\bpi(B)$, so the socle
identities for $B$ give $\bpi(B*X)=\bpi(X*B)$; hence $B*X=X*B$, and since $B$
is rigid, $B$ and $X$ strongly commute by \eqref{eq:weak-strong}.

\subsection{Duality}\label{subsec:duality}
Taking contragredients replaces $\Z_\varrho$ by $\Z_{\varrho^\vee}$, and
\begin{subequations}\label{eq:rep-duality}
\begin{equation}\label{eq:pi-duality}
 \bpi_\varrho(C)^\vee=\bpi_{\varrho^\vee}(C^\vee).
\end{equation}
Finally, for irreducible $\tau,\pi_1,\pi_2$ we have the reciprocity
\begin{equation}\label{eq:embedding-duality}
 \tau\hookrightarrow\pi_1\times\pi_2
 \iff
 \tau^\vee\hookrightarrow\pi_2^\vee\times\pi_1^\vee.
\end{equation}
\end{subequations}
See \cite{MR4847251}*{(3.4)}, which goes back to \cite{MR0333080}*{Theorem 1}.

Therefore, if $\bpi(C)$ is $\sq$-irreducible then
\begin{equation}\label{eq:L-R}
 C\ \text{is \Lgd{}}\iff C^\vee\ \text{is \Rgd{}},
\end{equation}
where \Lgd{} refers to $\varrho$ and \Rgd{} to $\varrho^\vee$.

\section{Propagation}\label{sec:proof}

\subsection{Stable commutation}\label{subsec:coextension}

For a module $M$ over a ring $R$ and $\xi_1,\dots,\xi_r\in M$ we write
$R\langle\xi_1,\dots,\xi_r\rangle$ for the submodule of $M$ generated by
$\xi_1,\dots,\xi_r$, and if $M$ is finitely generated we write $\mu_R(M)$
for the minimal number of generators of $M$.
The following simple fact will play a key role.

\begin{observation}\label{lem:coextension}
Let $y$ be a rigid $\Pi$-module, let $x$ be any $\Pi$-module, and regard
$E=\Ext^1_\Pi(x,y)$ as a left module over $R=\End_\Pi(y)$, an endomorphism
acting by pushforward. Let $r\ge0$, $\xi_1,\dots,\xi_r\in E$, and let
\begin{equation}\label{eq:universal-extension}
 0\longrightarrow y^{\oplus r}\longrightarrow z\longrightarrow x\longrightarrow0
\end{equation}
represent $(\xi_1,\dots,\xi_r)$ under
$\Ext^1_\Pi(x,y^{\oplus r})\simeq E^{\oplus r}$. Then,
\begin{equation}\label{eq:ext-single}
 \Ext^1_\Pi(z,y)\simeq E/R\langle\xi_1,\dots,\xi_r\rangle.
\end{equation}
Indeed, applying $\Hom_\Pi(-,y)$ to \eqref{eq:universal-extension} gives the exact sequence
\[
\Hom_\Pi(y^{\oplus r},y)\to E\to\Ext^1_\Pi(z,y)\to\Ext^1_\Pi(y^{\oplus r},y)=0,
\]
whose first map sends $(\phi_1,\dots,\phi_r)\in R^{\oplus r}$ to $\sum_i(\phi_i)_*\xi_i$. 
\end{observation}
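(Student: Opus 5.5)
The plan is to apply the contravariant functor $\Hom_\Pi(-,y)$ to \eqref{eq:universal-extension} and read off $\Ext^1_\Pi(z,y)$ from the resulting long exact sequence. The relevant stretch is
\[
\Hom_\Pi(y^{\oplus r},y)\xrightarrow{\ \delta\ }\Ext^1_\Pi(x,y)\longrightarrow\Ext^1_\Pi(z,y)\longrightarrow\Ext^1_\Pi(y^{\oplus r},y).
\]
The last term is $\Ext^1_\Pi(y,y)^{\oplus r}=0$, because $y$ is rigid. Hence $\Ext^1_\Pi(z,y)$ is isomorphic to the cokernel of the connecting map $\delta$, and everything reduces to identifying the image of $\delta$.

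We have $\Hom_\Pi(y^{\oplus r},y)=R^{\oplus r}$; write an element as $(\phi_1,\dots,\phi_r)$. The standard description of the connecting homomorphism is that $\delta(\phi)=\phi_*[\eqref{eq:universal-extension}]$, i.e.\ the pushforward along $\phi\colon y^{\oplus r}\to y$ of the class of the extension in $\Ext^1_\Pi(x,y^{\oplus r})$. This class corresponds to $(\xi_1,\dots,\xi_r)$ under the identification $\Ext^1_\Pi(x,y^{\oplus r})\simeq E^{\oplus r}$. Moreover, $\phi$ is the sum $\sum_i\phi_i\circ p_i$ of its components composed with the projections $p_i$. Pushforward is additive in the morphism, and $(p_i)_*$ extracts the $i$-th coordinate $\xi_i$. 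Therefore $\delta(\phi)=\sum_i(\phi_i)_*\xi_i$.

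The image of $\delta$ is then exactly $R\langle\xi_1,\dots,\xi_r\rangle$, where the $R$-action on $E$ is by pushforward as stipulated. This gives \eqref{eq:ext-single}. The isomorphism is a priori only one of vector spaces, but it is in fact $R$-linear: $\End_\Pi(y)$ acts on the whole long exact sequence through its action on the second argument of $\Hom$ and $\Ext$, and that action commutes with the connecting maps.

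The only point needing care is the sign and form of the connecting homomorphism, and any sign ambiguity does not affect the image. There is no real obstacle here; the whole argument is a direct computation with the long exact sequence.
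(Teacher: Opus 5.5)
Your proposal is correct and follows the same route as the paper: apply $\Hom_\Pi(-,y)$ to the extension, kill $\Ext^1_\Pi(y^{\oplus r},y)$ by rigidity of $y$, and identify the connecting map as $(\phi_1,\dots,\phi_r)\mapsto\sum_i(\phi_i)_*\xi_i$, whose image is $R\langle\xi_1,\dots,\xi_r\rangle$. Your added remarks on $R$-linearity and the sign of the connecting map are accurate but not needed for how the observation is used (only dimensions matter in \cref{cor:ext-strict}).
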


Now let $A\in\Comp$ be rigid with rigid representative $y$, let $R=\End_\Pi(y)$, let $D\in\Comp$, and put
$e=\ext^1_\Pi(D,A)$. Let
\begin{subequations}
\begin{equation}\label{eq:generic-locus}
 \mathcal X=\{x\in D:\dim\Ext^1_\Pi(x,y)=e\},
\end{equation}
a nonempty open subset of $D$, and for $x\in\mathcal X$ write
$E_x=\Ext^1_\Pi(x,y)$ for the $R$-module of \cref{lem:coextension}.
In general, if $D$ is not rigid, the module $E_x$ genuinely depends on $x$.

Put
\begin{equation}\label{eq:m-r}
 m_r=\max\bigl\{\dim_KR\langle\xi_1,\dots,\xi_r\rangle
 :x\in\mathcal X,\ \xi_1,\dots,\xi_r\in E_x\bigr\},
\end{equation}
\end{subequations}
and let $\mu(D,A)=\min_{x\in\mathcal X}\mu_R(E_x)\le e$, with
equality when $y$ is a brick, that is, when $R=K$.
Clearly
\begin{equation}\label{eq:m-r-monotone}
 0=m_0<m_1<\dots<m_{\mu(D,A)}=e=m_{\mu(D,A)+1}=\cdots.
\end{equation}

\begin{lemma}\label{cor:ext-strict}
With the assumptions and notation above, put $D_r=D*A^{\oplus r}$. Then
\begin{equation}\label{eq:ext-formula}
 \ext^1_\Pi(D_r,A)=e-m_r\qquad\text{for every }r\ge0.
\end{equation}
Consequently, for every $r\ge0$:
\begin{enumerate}[label=\textup{(\roman*)},ref=\roman*]
\item\label{item:ext-strict}
If $\ext^1_\Pi(D_r,A)>0$ then $\ext^1_\Pi(D_{r+1},A)<\ext^1_\Pi(D_r,A)$.
\item\label{item:ext-bound}
$\ext^1_\Pi(D_r,A)\le\max(e-r,0)$.
\item\label{item:commute}
$D_r\scom A$ if and only if $r\ge\mu(D,A)$.
\end{enumerate}
\end{lemma}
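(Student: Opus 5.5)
The plan is to compute $\ext^1_\Pi(D_r,A)$ by realizing generic points of $D_r=D*A^{\oplus r}$ as extensions of the form \eqref{eq:universal-extension} and then applying \cref{lem:coextension}. Since $A$ is rigid, $A^{\oplus r}$ is rigid with rigid representative $y^{\oplus r}$. Moreover $\Ext^1_\Pi(x,y^{\oplus r})\simeq E_x^{\oplus r}$, so $\ext^1_\Pi(D,A^{\oplus r})=re$, attained exactly on $\mathcal X$. Hence $\Umin(D,A^{\oplus r})$ contains $\mathcal X\times\{y^{\oplus r}\}$, and up to the $G$-action it is dense, because the orbit of $y^{\oplus r}$ is open in $A^{\oplus r}$. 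Consequently $D_r$ is the closure of the set of middle terms $z$ of extensions $0\to y^{\oplus r}\to z\to x\to 0$ with $x\in\mathcal X$ and arbitrary class $(\xi_1,\dots,\xi_r)\in E_x^{\oplus r}$. This set is the image of an irreducible family, namely a vector bundle over $\mathcal X$ after a suitable $G$-saturation.

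\textbf{Upper bound.} By \cref{lem:coextension}, every such $z$ satisfies $\dim\Ext^1_\Pi(z,y)=e-\dim_K R\langle\xi_1,\dots,\xi_r\rangle$. The function $z\mapsto\dim\Ext^1_\Pi(z,y)$ is upper semicontinuous, and the middle terms are dense in $D_r$. The minimum of this function over $D_r$ is $\ext^1_\Pi(D_r,A)$, because $y$ is a rigid representative with dense orbit in $A$: by upper semicontinuity the generic value on $D_r\times A$ is attained with second coordinate $y$. Since the minimum is also attained on the dense constructible set of middle terms, it equals the minimum over that set, namely
\[
e-\max_{x,\xi}\dim_K R\langle\xi_1,\dots,\xi_r\rangle=e-m_r .
\]
This proves \eqref{eq:ext-formula}. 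The main obstacle is the care needed in this step: $\Umin(D,A^{\oplus r})$ may contain pairs $(x,y')$ with $y'$ not isomorphic to $y^{\oplus r}$. I will circumvent this by noting that the closure defining $D_r$ is unchanged when we restrict to the dense open subset of $\Umin$ where the second coordinate lies in the open orbit. An irreducible variety's closure is determined by any dense open subset, and the map to middle terms is continuous on the bundle of extensions. Hence the middle terms with $y'\simeq y^{\oplus r}$ are dense in $D_r$.

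\textbf{Consequences.} The monotonicity statement \eqref{eq:m-r-monotone} needs a short justification. The inequality $m_{r+1}\ge m_r$ is clear. Suppose $m_r<e$, and take $x$ and $\xi$'s attaining $m_r$. Then $R\langle\xi\rangle\ne E_x$, so adding some $\xi_{r+1}\notin R\langle\xi\rangle$ gives a strictly larger submodule, whence $m_{r+1}>m_r$. Also $m_r=e$ as soon as $r\ge\mu_R(E_x)$ for some $x\in\mathcal X$; that is, $m_r=e$ if and only if $r\ge\mu(D,A)$. Combining these facts with \eqref{eq:ext-formula} gives the three parts. Part \cref{item:ext-strict} is the strict increase of $m_r$ while $m_r<e$. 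Part \cref{item:ext-bound} follows because the increase is by at least $1$ at each step, so $m_r\ge\min(r,e)$. Part \cref{item:commute} follows since $D_r\scom A$ means $e-m_r=0$, and this holds exactly when $r\ge\mu(D,A)$.
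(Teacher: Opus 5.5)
Your proposal is correct and follows essentially the same route as the paper. You realize a dense subset of $D_r$ as middle terms of extensions of $x\in\mathcal X$ by the rigid representative $y^{\oplus r}$, by restricting $\Umin(D,A^{\oplus r})$ to its dense open part over the open orbit of $A^{\oplus r}$. You then minimize $\dim\Ext^1_\Pi(z,y)=e-\dim_K R\langle\xi_1,\dots,\xi_r\rangle$ using \cref{lem:coextension} and upper semicontinuity, and read off (i)--(iii) from \eqref{eq:m-r-monotone}. The one difference is that you spell out why \eqref{eq:m-r-monotone} holds ($m_r$ grows strictly while $m_r<e$, and $m_r=e$ exactly when $r\ge\mu(D,A)$), which the paper simply calls clear.
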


\begin{proof}
For $x\in\mathcal X$ we have $\dim\Ext^1_\Pi(x,y^{\oplus r})=re=\ext^1_\Pi(D,A^{\oplus r})$, so
$(x,y^{\oplus r})\in\Umin(D,A^{\oplus r})$ and the middle term $z$ of \eqref{eq:universal-extension} lies in $D_r$.
Conversely a pair of $\Umin(D,A^{\oplus r})$ whose second entry lies in the open orbit of
$A^{\oplus r}$ is of that form with $x\in\mathcal X$, and every extension of such an $x$ by $y^{\oplus r}$ is represented by a tuple in
$E_x^{\oplus r}$. Hence, as $x$ ranges over $\mathcal X$ and the $\xi_i$ over $E_x$, these $z$ run through a dense subset of $D_r$, and minimising
$\dim\Ext^1_\Pi(z,y)$, given by \eqref{eq:ext-single}, over that set gives \eqref{eq:ext-formula}. Assertions \cref{item:ext-strict},
\cref{item:ext-bound} and \cref{item:commute} follow from \eqref{eq:ext-formula} and \eqref{eq:m-r-monotone}.
\end{proof}

\subsection{Cancellation}\label{subsec:cancellation}

\Cref{cor:ext-strict} will be used in conjunction with the following result.

\begin{lemma}\label{lem:cancellation}
Let $B$ be \gd{}, let $C\scom B$ and suppose that $\bpi(C)$ is $\sq$-irreducible. Then for every $D\in\Comp$,
\[
 \SOC(D,C)\iff\SOC(D*B,C).
\]
\end{lemma}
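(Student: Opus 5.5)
We compare both sides of each socle formula and cancel $\bpi(B)$ using the injectivity statement \cref{item:inj}. Since $C\scom B$, we have $C*B=B*C=B\oplus C$ by \eqref{eq:strong-direct-sum}. By \cref{lem:good-product}\cref{item:good-commutation}, $\bpi(B)\times\bpi(C)\simeq\bpi(C)\times\bpi(B)$ is irreducible and equal to $\bpi(B*C)$. Both $\bpi(B)$ and $\bpi(C)$ are $\sq$-irreducible, so $\bpi(B)\times\bpi(C)$ is $\sq$-irreducible by \cref{item:product}.

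First I would rewrite the geometric side. Because $B\scom C$, \eqref{eq:swap-right} applies and gives
\[
 (D*B)*C=(D*C)*B=D*(B*C).
\]
Next I would rewrite the representation side. Put $\sigma=\bpi(D)$, $\beta=\bpi(B)$, $\gamma=\bpi(C)$. By \eqref{eq:rep-swap-right}, applied to the triple $\sigma,\beta,\gamma$ (legitimate since $\beta,\gamma$ are $\sq$-irreducible and $\beta\times\gamma$ is irreducible),
\[
 \soc\bigl(\soc(\sigma\times\beta)\times\gamma\bigr)=\soc\bigl(\soc(\sigma\times\gamma)\times\beta\bigr).
\]
Since $B$ is \gd{}, $\SOC(D,B)$ holds, so $\soc(\sigma\times\beta)=\bpi(D*B)$. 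The left side is therefore $\soc(\bpi(D*B)\times\gamma)$. Again because $B$ is \gd{}, applying $\SOC(D*C,B)$ gives
\[
 \bpi\bigl((D*C)*B\bigr)=\soc\bigl(\bpi(D*C)\times\beta\bigr).
\]

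For the direction $\SOC(D,C)\Rightarrow\SOC(D*B,C)$: substituting $\soc(\sigma\times\gamma)=\bpi(D*C)$ into the right side gives $\soc(\bpi(D*C)\times\beta)=\bpi((D*C)*B)=\bpi((D*B)*C)$. This is exactly $\SOC(D*B,C)$.

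For the converse, assume $\SOC(D*B,C)$. The left side then equals $\bpi((D*B)*C)=\bpi((D*C)*B)=\soc(\bpi(D*C)\times\beta)$. Hence
\[
 \soc\bigl(\soc(\sigma\times\gamma)\times\beta\bigr)=\soc\bigl(\bpi(D*C)\times\beta\bigr).
\]
Here $\soc(\sigma\times\gamma)$ is irreducible by \cref{item:si}, since $\gamma$ is $\sq$-irreducible. Because $\beta$ is $\sq$-irreducible, \cref{item:inj} says the socle of $(-)\times\beta$ determines the irreducible factor, so $\soc(\sigma\times\gamma)=\bpi(D*C)$. This is $\SOC(D,C)$.

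The main point to check is that every socle formula used is legitimate: the inner socles must be irreducible, and the hypotheses of \eqref{eq:rep-swap-right} and \cref{item:inj} must hold. Only \Rgd{}ness of $B$ (i.e.\ $\SOC(X,B)$ for all $X$) is needed, together with \cref{lem:good-product} for the irreducibility of $\beta\times\gamma$, which uses both halves of \gd{}.
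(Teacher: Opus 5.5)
Your proposal is correct and is essentially the paper's proof. Both arguments identify the two sides of $\SOC(D*B,C)$ with the images of the two sides of $\SOC(D,C)$ under $\tau\mapsto\soc\bigl(\tau\times\bpi(B)\bigr)$: one side via \eqref{eq:swap-right} and $\SOC(D*C,B)$, the other via $\SOC(D,B)$ and \eqref{eq:rep-swap-right}. Both then conclude with \cref{item:inj}. The only difference is cosmetic: you split the argument into two directions and use injectivity only for the converse, whereas the paper invokes injectivity once to get the equivalence directly.
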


\begin{proof}
Since $\bpi(C)$ is $\sq$-irreducible, the socles in the statement are irreducible by \cref{item:si}.
We show that the two sides of the second identity are obtained from the two sides of the first one by
applying $\tau\mapsto\soc\bigl(\tau\times\bpi(B)\bigr)$:
\begin{subequations}\label{eq:cancel}
\begin{align}
 \bpi\bigl((D*B)*C\bigr)&=\soc\bigl(\bpi(D*C)\times\bpi(B)\bigr),
 \label{eq:cancel-lhs}\\
 \soc\bigl(\bpi(D*B)\times\bpi(C)\bigr)&=
 \soc\Bigl(\soc\bigl(\bpi(D)\times\bpi(C)\bigr)\times\bpi(B)\Bigr).
 \label{eq:cancel-rhs}
\end{align}
\end{subequations}
Since $\bpi(B)$ is $\sq$-irreducible, $\tau\mapsto\soc\bigl(\tau\times\bpi(B)\bigr)$
is injective on irreducible representations by \cref{item:inj}, and the lemma follows.

For \eqref{eq:cancel-lhs}, since $B\scom C$, \eqref{eq:swap-right} gives $(D*B)*C=(D*C)*B$.
Since $B$ is \Rgd{}, $\SOC(D*C,B)$ holds, that is,
$\bpi\bigl((D*C)*B\bigr)=\soc\bigl(\bpi(D*C)\times\bpi(B)\bigr)$, which is \eqref{eq:cancel-lhs}.

For \eqref{eq:cancel-rhs}, since $B$ is \Rgd{} we have $\SOC(D,B)$, and $\bpi(B)\times\bpi(C)$
is irreducible by \cref{lem:good-product}\cref{item:good-commutation}.
Hence \eqref{eq:cancel-rhs} is \eqref{eq:rep-swap-right} for the triple $\bpi(D),\bpi(B),\bpi(C)$.
\end{proof}

\begin{corollary}\label{cor:perpendicular}
Let $A$ be rigid and \gd{}, and let $C\scom A$, with $\bpi(C)$ $\sq$-irreducible.
If $\SOC(D,C)$ for every $D\scom A$, then $C$ is \Rgd{}.
\end{corollary}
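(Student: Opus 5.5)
Fix an arbitrary $D\in\Comp$; we must show $\SOC(D,C)$. The plan is to pass from $D$ to $D_r=D*A^{\oplus r}$ for large $r$, which commutes with $A$, and then come back to $D$ using \cref{lem:cancellation}.

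First we set up the auxiliary factor. Let $B=A^{\oplus r}$ with $r=\mu(D,A)$, or any larger integer. By \cref{lem:good-product}\cref{item:good-power}, $B$ is \gd{}, since $A$ is rigid and \gd{}. Next we check that $C\scom B$. Let $c\in C$ and $y\in A$ be generic, with $y$ the rigid representative. Then $\Ext^1_\Pi(c,y^{\oplus r})=\Ext^1_\Pi(c,y)^{\oplus r}$, and for generic $c$ this vanishes because $C\scom A$. Hence $\ext^1_\Pi(C,A^{\oplus r})=0$, that is, $C\scom B$.

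We now apply \cref{lem:cancellation} to the triple $B$, $C$, $D$. Its hypotheses are that $B$ is \gd{}, that $C\scom B$, and that $\bpi(C)$ is $\sq$-irreducible. It gives
\[
\SOC(D,C)\iff\SOC(D*A^{\oplus r},C).
\]
By \cref{cor:ext-strict}\cref{item:commute}, $D_r=D*A^{\oplus r}$ satisfies $D_r\scom A$ because $r\ge\mu(D,A)$. The hypothesis of the corollary then gives $\SOC(D_r,C)$. Hence $\SOC(D,C)$ holds. Since $D$ was arbitrary and $\bpi(C)$ is $\sq$-irreducible, $C$ is \Rgd{}.

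The only point needing care is the identification $D*B=D_r$, where $B=A^{\oplus r}$ is the component containing $y^{\oplus r}$. This is exactly the definition of $D_r$ used in \cref{cor:ext-strict}. The main (mild) obstacle is the passage from $C\scom A$ to $C\scom A^{\oplus r}$. It holds because the generic representative of $A^{\oplus r}$ is $y^{\oplus r}$ and $\Ext^1$ is additive; semicontinuity is not needed, since $y^{\oplus r}$ lies in the open orbit.
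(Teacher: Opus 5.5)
Your proposal is correct and follows essentially the same route as the paper's proof. You pick $B=A^{\oplus r}$ with $r\ge\mu(D,A)$, get $D*B\scom A$ from \cref{cor:ext-strict}\cref{item:commute} and acceptability of $B$ from \cref{lem:good-product}\cref{item:good-power}, use $B\scom C$, and then apply \cref{lem:cancellation}. Your extra justification of $C\scom A^{\oplus r}$ via the rigid representative $y^{\oplus r}$ just spells out the step the paper states in one line.
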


\begin{proof}
Given $D$, choose $r\ge\mu(D,A)$ and put $B=A^{\oplus r}$. Then
$D*B\scom A$ by \cref{cor:ext-strict}\cref{item:commute}, and $B$ is \gd{} by
\cref{lem:good-product}\cref{item:good-power}. Moreover $B\scom C$, since $A\scom C$.
By hypothesis, $\SOC(D*B,C)$ holds and \cref{lem:cancellation} then gives $\SOC(D,C)$.
\end{proof}

\subsection{One-sided propagation}\label{subsec:propagation}

\begin{proposition}[One-sided propagation]\label{prop:propagation}
Let $A$ be rigid and \gd{}, and let $B$ be \Rgd{}. Put $C=A*B$ and assume
that $A\scom C$. Then $C$ is \Rgd{}; in particular
$\bpi(C)$ is $\sq$-irreducible.
\end{proposition}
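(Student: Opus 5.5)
The plan is to establish $\sq$-irreducibility of $\bpi(C)$ first, via \cref{item:companion}, and then to prove \Rgd{}-ness through \cref{cor:perpendicular}. That corollary reduces the task to proving $\SOC(D,C)$ only for those $D$ with $D\scom A$.

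\emph{Step 1: $\sq$-irreducibility.} Since $A$ is \Lgd{}, $\SOC(A,B)$ holds, that is,
\[
\bpi(C)=\bpi(A*B)=\soc\bigl(\bpi(A)\times\bpi(B)\bigr).
\]
Since $A$ is \gd{} and $C\scom A$, \cref{lem:good-product}\cref{item:good-commutation} (with $X=C$) shows that $\bpi(C)\times\bpi(A)$ is irreducible. The representation $\bpi(B)$ is $\sq$-irreducible because $B$ is \Rgd{}. We may therefore apply \cref{item:companion} with $\sigma=\bpi(A)$, $\tau=\bpi(B)$ and $\pi=\bpi(C)$. It gives that $\bpi(C)$ is $\sq$-irreducible.

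\emph{Step 2: reduction.} By Step 1 and the hypotheses ($A$ rigid and \gd{}, $C\scom A$), \cref{cor:perpendicular} applies. It therefore suffices to prove $\SOC(D,C)$ for every $D$ with $D\scom A$.

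\emph{Step 3: the case $D\scom A$.} Geometrically, \eqref{eq:swap-left} applied to the strongly commuting pair $D,A$ gives
\[
D*C=D*(A*B)=A*(D*B).
\]
Since $A$ is \Lgd{} and $B$ is \Rgd{}, this yields
\[
\bpi(D*C)=\soc\bigl(\bpi(A)\times\bpi(D*B)\bigr)=\soc\Bigl(\bpi(A)\times\soc\bigl(\bpi(D)\times\bpi(B)\bigr)\Bigr).
\]
On the other side we must compute
\[
\soc\bigl(\bpi(D)\times\bpi(C)\bigr)=\soc\Bigl(\bpi(D)\times\soc\bigl(\bpi(A)\times\bpi(B)\bigr)\Bigr).
\]
Both of these irreducible representations embed into the triple product
\[
\Theta=\bpi(A)\times\bpi(D)\times\bpi(B)\simeq\bpi(D)\times\bpi(A)\times\bpi(B).
\]
The isomorphism holds because $\bpi(A)\times\bpi(D)\simeq\bpi(D)\times\bpi(A)$ is irreducible, by \cref{lem:good-product}\cref{item:good-commutation} applied to $X=D$. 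The two embeddings come from exactness of $\times$: the inner socles sit inside the corresponding double products, and the outer socles sit inside the resulting induced representations.

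It remains to show that $\Theta$ is SI, since then both socles equal $\soc(\Theta)$ and $\SOC(D,C)$ follows. For this we write $\Theta=\sigma\times\bpi(B)$ with $\sigma=\bpi(A)\times\bpi(D)$. Here $\sigma$ is irreducible, though not necessarily $\sq$-irreducible, and $\bpi(B)$ is $\sq$-irreducible. Hence \cref{item:si} applies and shows that $\Theta$ is SI.

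I expect this SI verification to be the only delicate point. Naively applying \eqref{eq:socle-assoc} or \eqref{eq:rep-swap-left} would require $\bpi(D)$ to be $\sq$-irreducible, which we do not know. Grouping $\bpi(A)\times\bpi(D)$ into a single irreducible factor avoids this, because \cref{item:si} only needs one of the two factors to be $\sq$-irreducible.
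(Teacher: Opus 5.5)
Your proof is correct and follows the paper's argument: you get $\sq$-irreducibility from \cref{item:companion}, reduce to $D\scom A$ by \cref{cor:perpendicular}, and obtain the SI property of $\bpi(D)\times\bpi(A)\times\bpi(B)$ from \cref{item:si} by treating the irreducible product $\bpi(D)\times\bpi(A)$ as a single factor. The only difference is that the paper reassociates $D*C=(D*A)*B$ and needs only $\SOC(D*A,B)$ together with $\bpi(D*A)=\bpi(D)\times\bpi(A)$, whereas your rewriting $D*C=A*(D*B)$ via \eqref{eq:swap-left} also uses $\SOC(A,D*B)$ and $\SOC(D,B)$, which is equally valid but slightly longer.
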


Note that we don't need to assume a priori that either $B$ or $C$ is rigid;
only the rigidity of $A$ is used, through \cref{cor:perpendicular}.
(Recall that the rigidity of $B$ implies that of $C$ by \eqref{eq:rigid-product}.)

\begin{proof}
Since $B$ is \Rgd{} (or since $A$ is \Lgd{}),
\begin{equation}\label{eq:rho-socle}
 \bpi(C)=\soc\bigl(\bpi(A)\times\bpi(B)\bigr).
\end{equation}
By \cref{lem:good-product}\cref{item:good-commutation}, applied to the
\gd{} component $A$ and $C\scom A$, the representation
$\bpi(C)\times\bpi(A)$ is irreducible. Hence \cref{item:companion} applied
to \eqref{eq:rho-socle} shows that $\bpi(C)$ is $\sq$-irreducible.

By \cref{cor:perpendicular} it remains to show $\SOC(D,C)$ for $D\scom A$. For such
$D$, $\bpi(D*A)=\bpi(D)\times\bpi(A)$ is irreducible by
\cref{lem:good-product}\cref{item:good-commutation}, so that
$\bpi(D)\times\bpi(A)\times\bpi(B)$ is SI by \cref{item:si}, and
\[
 \bpi(D*C)=\bpi\bigl((D*A)*B\bigr)
 =\soc\bigl(\bpi(D)\times\bpi(A)\times\bpi(B)\bigr)
 =\soc\bigl(\bpi(D)\times\bpi(C)\bigr).
\]
The first equality is \eqref{eq:associativity} for the pair $D\scom A$,
the second holds because $B$ is \Rgd{}, and the third is
\eqref{eq:socle-assoc} together with \eqref{eq:rho-socle}.
\end{proof}

\begin{remark}\label{rem:orders}
Note the asymmetry in \Cref{prop:propagation}.
A factorization $C=A*B$ shows that $C$ is \Rgd{}, because after reducing to $D\scom A$ the
expression $(D*A)*B$ may be reassociated, the pair $D,A$ being adjacent.
The expression $(A*B)*D$ has no such feature: the condition $A\scom D$ on
the outer factors is not a hypothesis under which
\eqref{eq:associativity} is available. The property \Lgd{} has to be obtained
by duality instead.
\end{remark}

\subsection{Propagation along factorizations}\label{subsec:conditional}

\begin{theorem}\label{thm:conditional}
Let $\mathcal S$ be a set of nonzero rigid components, closed under
$C\mapsto C^\vee$, such that every $C\in\mathcal S$ is either basic or of
the form $C=A*B$ with $A,B\in\mathcal S$ and $A\scom C$. Then every
member of $\mathcal S$ is \gd{}.
\end{theorem}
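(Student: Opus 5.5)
We argue by induction on $\lvert C\rvert$ and prove that every $C\in\mathcal S$ is \Rgd{}. Since $\mathcal S$ is closed under $C\mapsto C^\vee$, the same statement then applies to $C^\vee$, which is \Rgd{} (with respect to $\varrho^\vee$). By \eqref{eq:L-R} this makes $C$ \Lgd{}, so $C$ is \gd{}. Because the argument is uniform in $\varrho$, it is enough to prove that every member of $\mathcal S$ is \Rgd{}, for every cuspidal line simultaneously.

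The natural induction hypothesis, however, is that every member of $\mathcal S$ of size less than $\lvert C\rvert$ is \gd{}, with respect to every $\varrho$. Fix $C\in\mathcal S$. If $C$ is basic, it is \gd{} by the facts recalled in \cref{subsec:reps}. Otherwise write $C=A*B$ with $A,B\in\mathcal S$ and $A\scom C$.

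The key point is that $A$ and $B$ are strictly smaller than $C$. Since $\gdim$ is additive along the extensions \eqref{eq:product}, we have $\lvert C\rvert=\lvert A\rvert+\lvert B\rvert$. Members of $\mathcal S$ are nonzero, so $\lvert A\rvert,\lvert B\rvert\ge1$, and hence both are strictly less than $\lvert C\rvert$. By induction $A$ and $B$ are \gd{}, and $A$ is rigid because $\mathcal S$ consists of rigid components. \Cref{prop:propagation} then gives that $C$ is \Rgd{}, and in particular that $\bpi(C)$ is $\sq$-irreducible.

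To obtain \Lgd{}, we apply the same reasoning to $C^\vee\in\mathcal S$, which has the same size $\lvert C^\vee\rvert=\lvert C\rvert$ by \eqref{eq:geometric-duality}. Either $C^\vee$ is basic, or it has a factorization $C^\vee=A'*B'$ within $\mathcal S$ with smaller factors, which are \gd{} by the induction hypothesis. In both cases $C^\vee$ is \Rgd{} with respect to $\varrho^\vee$. By \eqref{eq:pi-duality}, $\bpi(C)$ is $\sq$-irreducible, since contragredient commutes with $\times$. Then \eqref{eq:L-R} gives that $C$ is \Lgd{}. Hence $C$ is \gd{}.

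The main point to handle carefully is this bookkeeping. The induction must run over size, with $C$ and $C^\vee$ treated at the same level, and the hypothesis must be quantified over all cuspidal lines so that the duality step is available. There is no circularity: at size $n$ we only use the \gd{} property of members of size less than $n$, together with \cref{prop:propagation} applied separately to $C$ and to $C^\vee$. No other step presents difficulty.
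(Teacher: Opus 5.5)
Your proof is correct and follows the paper's argument. You induct on $\lvert C\rvert$, apply \cref{prop:propagation} to the factorization $C=A*B$ to get \Rgd{}, then apply the same argument to $C^\vee$ and invoke \eqref{eq:L-R} to get \Lgd{}. You also spell out two points the paper leaves implicit: that $\lvert C\rvert=\lvert A\rvert+\lvert B\rvert$ with both factors nonzero, and that the induction hypothesis must hold for every cuspidal line, so that it applies to $C^\vee$ with respect to $\varrho^\vee$.
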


\begin{proof}
We argue by induction on $\lvert C\rvert$. If $C$ is basic, it is \gd{} (\cref{subsec:reps}).
Otherwise $C=A*B$ with $A,B\in\mathcal S$ and $A\scom C$. By induction hypothesis, $A$ and $B$ are \gd{}.
By \cref{prop:propagation}, $C$ is \Rgd{}.
Since $C^\vee\in\mathcal S$ is not basic either, and $\lvert C^\vee\rvert=\lvert C\rvert$, the same argument shows that
$C^\vee$ is \Rgd{}, that is, $C$ is \Lgd{} by \eqref{eq:L-R}. Hence $C$ is \gd{}.
\end{proof}

\begin{definition}\label{def:distinguished}
Let $z$ be a rigid $\Pi$-module. A proper, nonzero submodule $y$ of $z$ is \emph{\dist{}} if it is rigid and $\Ext^1_\Pi(z,z/y)=0$.
\end{definition}

In this case $x=z/y$ is rigid by \cref{lem:ascending}, and
\begin{equation}\label{eq:distinguished}
 \Orb{z}=\Orb{x}*\Orb{y},\qquad \Orb{z}\scom\Orb{x}.
\end{equation}
Dually, by \eqref{eq:geometric-duality}, $z^\vee$ has a \dist{} submodule if and
only if $z$ has a proper, nonzero submodule $y$ with $z/y$ rigid and $\Ext^1_\Pi(z,y)=0$.
Conversely, if $\Orb{z}=A*B$ with $A,B$ nonzero rigid components and $A\scom\Orb{z}$, then $z$ admits a \dist{} submodule.

Our original hope was that every nonzero non-simple rigid module admits a \dist{} submodule.
(This is the question raised in \cite{MR4847234}*{Remark 8.5}.)
\Cref{thm:conditional} would then imply that every rigid component is \gd{}.
Unfortunately, as we shall see in \cref{sec:thirty}, there exists a rigid module without a \dist{} submodule.
Nonetheless, \Cref{thm:conditional} handles the multiplicity free case, as we shall now see.

\subsection{The multiplicity free case}\label{subsec:mf}

\begin{definition}[\Extremal{} segments]\label{def:extremal}
Let $\Delta$ be a segment of a multisegment $\m$.
We say that $\Delta$ is \emph{\extremal{}} in $\m$ if no segment of $\n=\m-\Delta$
precedes $\Delta$ or $\Sh\Delta$, that is, by \eqref{eq:prec-ext} and
\eqref{eq:prec-hom}, if
\begin{equation}\label{eq:deletion-vanishing}
 \Ext^1_Q\bigl(\Mq(\n),\Mq(\Delta)\bigr)=0\quad\text{and}\quad
 \Hom_Q\bigl(\Mq(\Delta),\Mq(\n)\bigr)=0.
\end{equation}
\end{definition}

This is condition (3b) in the definition of detachable segments in \cite{MR3866895}*{Definition 3.1}.
Of course, we could have similarly defined right-extremal, but we will not need it.

Since $\Delta\prec\Sh\Delta$, a \extremal{} segment occurs with multiplicity one in $\m$.
Conversely, a segment $\Delta=[a,b]$ of $\m$ that minimizes $a+b$ and occurs with multiplicity one in $\m$ is \extremal{}.
Note that a \extremal{} segment is not necessarily unique.

\begin{observation}[\Extremal{} deletion]\label{prop:deletion}
Let $\Delta$ be \extremal{} in $\m$, $\n=\m-\Delta$ and $A=\Cm{\n}$.
Then $\Cm{\m}=A*\Cm{\Delta}$. If moreover $\Cm{\m}$ is rigid, then
$A$ is rigid and $\Cm{\m}\scom A$.
\end{observation}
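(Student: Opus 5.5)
The plan is to realize a dense subset of $\Cm{\m}$ as middle terms of extensions of a generic point of $A=\Cm{\n}$ by the rigid representative of $\Cm{\Delta}$. Then $\Cm{\m}\subseteq A*\Cm{\Delta}$, and the two must be equal because both are irreducible components of the same $\NV(\bd)$. The rigid part of the statement will then be derived from Crawley-Boevey's formula \eqref{eq:cb} and the long exact sequences.

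\emph{Step 1: $y=\Mq(\Delta)$ is a submodule of a generic $z\in\Cm{\m}$.} Put $y=\Mq(\Delta)$ with zero reverse arrows. This is the rigid representative of the basic component $\Cm{\Delta}$, and it is a brick. A point $z$ of the conormal bundle over $\Mq(\m)=\Mq(\n)\oplus\Mq(\Delta)$ is given by data $\dat{z}\in\Ext^1_Q(\Mq(\m),\Mq(\m))^*$, which decomposes into four blocks according to the direct sum decomposition. The block that governs the reverse arrows from the $\Delta$-summand into the $\n$-summand is the one dual to $\Ext^1_Q(\Mq(\n),\Mq(\Delta))$. This block vanishes by \eqref{eq:deletion-vanishing}; the identification should be checked against the description of \eqref{eq:reverse-space} in \cite{MR4847234}*{\S9}. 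The diagonal block at $\Delta$ lies in $\Ext^1_Q(\Mq(\Delta),\Mq(\Delta))^*=0$. Hence $y\subset z$ is a $\Pi$-submodule, and $x=z/y$ has underlying $Q$-module $\Mq(\n)$ and data equal to the $\n$-block of $\dat{z}$. The projection from the conormal fibre over $\Mq(\m)$ to the conormal fibre over $\Mq(\n)$ is a surjective linear map, so as $z$ ranges over a dense subset of $\Cm{\m}$, the quotient $x$ ranges over a dense subset of $A$.

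\emph{Step 2: the pairs $(x,y)$ lie in $\Umin(A,\Cm{\Delta})$.} By \eqref{eq:cb},
\[
\dim\Ext^1_\Pi(x,y)=\dim\Hom_\Pi(x,y)+\dim\Hom_\Pi(y,x)-(\gdim x,\gdim y).
\]
For $x$ with $x_Q\simeq\Mq(\n)$ we have $\Hom_\Pi(y,x)\subseteq\Hom_Q(\Mq(\Delta),\Mq(\n))=0$ by \eqref{eq:deletion-vanishing}. The function $x\mapsto\dim\Hom_\Pi(x,y)$ is upper semicontinuous, so it attains its minimum on a dense open subset of $A$, and by Step 1 a generic $z$ produces an $x$ in that subset. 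Therefore $(x,y)\in\Umin(A,\Cm{\Delta})$, and by \eqref{eq:product} a dense subset of $\Cm{\m}$ is contained in $A*\Cm{\Delta}$. Since $A*\Cm{\Delta}$ is an irreducible component of the same nilpotent variety \cite{MR4847234}*{Theorem 3.1}, we conclude $\Cm{\m}=A*\Cm{\Delta}$.

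\emph{Step 3: the rigid case.} Assume $\Cm{\m}$ is rigid and take $z$ in its open orbit. By Step 1 we may assume $0\to y\to z\to x\to0$ with $y=\Mq(\Delta)$ and $x$ generic in $A$. By \cref{lem:ascending}, rigidity of $x$ follows once $\Ext^1_\Pi(z,x)=0$, and this vanishing also gives $\Cm{\m}\scom A$. So everything reduces to proving $\Ext^1_\Pi(z,x)=0$.

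To prove it, I would use the vanishing $\Hom_\Pi(y,x)=0$ together with the fact that $y$ is a rigid brick. From these, the long exact sequences for $\Hom_\Pi(-,x)$, $\Hom_\Pi(y,-)$ and $\Hom_\Pi(z,-)$ give:
\begin{itemize}
\item $\Hom_\Pi(z,x)\simeq\End_\Pi(x)$;
\item $\Hom_\Pi(y,z)\simeq K$;
\item $\dim\Ext^1_\Pi(z,y)=\dim\Hom_\Pi(z,x)-\dim\End_\Pi(z)+\dim\Hom_\Pi(z,y)$, using $\Ext^1_\Pi(z,z)=0$.
\end{itemize}
Next, I would expand $\dim\Ext^1_\Pi(z,x)$ by \eqref{eq:cb}. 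I would then compare it with $0=2\dim\End_\Pi(z)-(\gdim z,\gdim z)$, splitting $\gdim z=\gdim x+\gdim y$ and using $(\gdim y,\gdim y)=2$. The aim is to show that all the terms cancel.

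This dimension bookkeeping is the step I expect to be the main obstacle. If it does not close directly, I would instead use the dual form of \cref{lem:ascending}. This would require establishing $\Ext^1_\Pi(y,z)=0$ via \cref{cor:ext-strict} applied to $D=A$ and the brick $y$, where $\mu=e$. It would then combine with \eqref{eq:rigid-product} to obtain the rigidity of $A$.
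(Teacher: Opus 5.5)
\paragraph{Steps 1 and 2.} These are correct. They give a self-contained proof of $\Cm{\m}=A*\Cm{\Delta}$, where the paper simply cites \cite{MR4847234}*{Corollary 9.4}. You do not even need \eqref{eq:cb} there: upper semicontinuity of $x\mapsto\dim\Ext^1_\Pi(x,y)$ already puts the generic pair in $\Umin(A,\Cm{\Delta})$.

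\paragraph{Step 3: the reduction is right, the vanishing is missing.} Reducing the rigid case to $\Ext^1_\Pi(z,x)=0$ via \cref{lem:ascending} is exactly the paper's route. However, the paper obtains this vanishing from \cite{MR4847234}*{Lemma 9.5}, and neither of your two routes supplies it. The bookkeeping does not close. Carrying it out also requires $\Hom_\Pi(x,z)$, which comes from $0\to\Hom_\Pi(x,y)\to\Hom_\Pi(x,z)\to\End_\Pi(x)\to\Ext^1_\Pi(x,y)$. The terms then cancel except for
\[
 \dim\Ext^1_\Pi(z,x)=\dim\Ext^1_\Pi(x,y)-\dim\{f^*\xi:f\in\End_\Pi(x)\},
\]
where $\xi$ is the class of $0\to y\to z\to x\to0$. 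So what actually has to be shown is that $\xi$ generates $\Ext^1_\Pi(x,y)$ as a right $\End_\Pi(x)$-module. This does not follow from the identities you list.

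\paragraph{The fallback is circular or false.}
\begin{itemize}
\item The dual form of \cref{lem:ascending} assumes $x$ rigid, which is what you are trying to prove.
\item \eqref{eq:rigid-product} assumes $\Cm{\m}\scom A$, which is the other conclusion you are trying to prove.
\item $\Ext^1_\Pi(y,z)=0$ fails in general. \Cref{cor:ext-strict} with $R=K$ gives $\ext^1_\Pi(\Cm{\m},\Cm{\Delta})=e-1$. For example, $\m=[2,2]+[3,3]+[3,4]$ with $\Delta=[2,2]$ has $\Cm{\m}$ rigid and $e=2$.
\end{itemize}

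\paragraph{The missing idea.} Use that the sequence splits over $Q$: $z_Q=x_Q\oplus y_Q$ with $\Hom_Q(y_Q,x_Q)=0=\Ext^1_Q(x_Q,y_Q)$. Then work with $\Tmap{z}{}$ in block form.
\begin{itemize}
\item Write $u\in\End_Q(z_Q)$ as $(a,b,c)$ with $a\in\End_Q(x_Q)$, $b\in\Hom_Q(x_Q,y_Q)$ and $c\in K$.
\item The $\Ext^1_Q(x_Q,x_Q)^*$-component of $\Tmap{z}{}(u)$ is $\Tmap{x}{}(a)$. This also equals $\Tmap{z}{x}(a\circ p)$, where $p\colon z\to x$ is the projection.
\item Identify $\Ext^1_Q(z_Q,z_Q)$ with $\Ext^1_Q(z_Q,x_Q)$, which is possible because $\Ext^1_Q(z_Q,y_Q)=0$. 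Since $\Tmap{x}{}(1_x)=0$, one gets $\Tmap{z}{}(u)=\Tmap{x}{z}(a-c1_x,b)$.
\end{itemize}
Surjectivity of $\Tmap{z}{}$, which is the rigidity of $z$, then makes $\Tmap{x}{}$, $\Tmap{z}{x}$ and $\Tmap{x}{z}$ all surjective. This gives the rigidity of $x$ by \eqref{eq:rigid-q}. Finally, \eqref{eq:cb} together with $\Hom_\Pi=\ker T$ yields
\[
 \dim\Ext^1_\Pi(z,x)=\dim\operatorname{coker}\Tmap{z}{x}+\dim\operatorname{coker}\Tmap{x}{z}=0 .
\]
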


Indeed, the first assertion is \cite{MR4847234}*{Corollary 9.4}, since the
first vanishing in \eqref{eq:deletion-vanishing} is that of the target of
the map $\Tmap{y}{x}$ of \eqref{eq:T-map}, for $x\in A$ and $y\in\Cm{\Delta}$. For
the second, \cite{MR4847234}*{Lemma 9.5}, applied to a rigid
representative $z$ of $\Cm{\m}$ (whose underlying $Q$-module is $\Mq(\m)$),
gives an exact sequence $0\to y\to z\to x\to0$ with $x\in A$ and $y\in\Cm{\Delta}$ rigid and
$\Ext^1_\Pi(z,x)=0$, so that $y$ is a \dist{} submodule of $z$.

\begin{corollary}[Multiplicity free rigid components are \gd{}]\label{cor:main}
Let $\m$ be a multiplicity free multisegment and suppose that
$C=\Cm{\m}$ is rigid. Then $C$ is \gd{}, that is, $\bpi(C)$ is
$\sq$-irreducible and
\begin{equation}\label{eq:main}
 \SOC(C,D)\quad\text{and}\quad\SOC(D,C)
\end{equation}
for every $D\in\Comp$, and $\bpi(C)\times\bpi(D)$ and $\bpi(D)\times\bpi(C)$ have
irreducible socle occurring with multiplicity one.
\end{corollary}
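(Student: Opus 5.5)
The plan is to apply \cref{thm:conditional} to the set
\[
\mathcal S=\{\Cm{\m}\ne0:\ \m\text{ multiplicity free and }\Cm{\m}\text{ rigid}\}.
\]
Once its hypotheses are checked, every member of $\mathcal S$ is \gd{}. The remaining assertions of \cref{cor:main} then follow directly. The $\sq$-irreducibility of $\bpi(C)$ is part of the definition of \gd{}. The SI property of $\bpi(C)\times\bpi(D)$ and $\bpi(D)\times\bpi(C)$ is \cref{item:si}. The zero component is trivially \gd{}, since $\bpi(0)=\one$.

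First I check closure under duality. The involution $C\mapsto C^\vee$ preserves rigidity, and by \eqref{eq:geometric-duality} it sends $\Cm{\m}$ to $\Cm{\m^\vee}$. Since $\m\mapsto\m^\vee$ preserves segment multiplicities, $\m^\vee$ is again multiplicity free. Hence $\mathcal S$ is closed under $\vee$.

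Next I verify the factorization condition for $C=\Cm{\m}\in\mathcal S$ by induction-free casework. If $\m$ consists of a single segment, $C=\Cm{[a,b]}$ is basic. Otherwise I choose $\Delta=[a,b]$ in $\m$ minimizing $a+b$. Since $\m$ is multiplicity free, $\Delta$ occurs with multiplicity one, so it is \extremal{} by the remark after \cref{def:extremal}. Put $\n=\m-\Delta\ne0$, $A=\Cm{\n}$ and $B=\Cm{\Delta}$. By \cref{prop:deletion}, $C=A*B$, $A$ is rigid, and $C\scom A$, i.e.\ $A\scom C$ by symmetry. The multisegment $\n$ is multiplicity free as a sub-multisegment of $\m$, so $A\in\mathcal S$. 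Also $B$ is basic and rigid (\cite{MR4847234}*{\S4}), so $B\in\mathcal S$. All of these components are nonzero.

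The only real subtlety is to match \cref{thm:conditional} exactly. It requires $A,B\in\mathcal S$ and $A\scom C$ with the rigid factor on the left. This is precisely the order produced by \extremal{} deletion, $C=\Cm{\n}*\Cm{\Delta}$. Its induction is on $\lvert C\rvert$, and $\lvert A\rvert,\lvert B\rvert<\lvert C\rvert$ because both are nonzero. No use of the dual decomposition is needed beyond closure of $\mathcal S$ under $\vee$, since \cref{thm:conditional} already obtains \Lgd{} from \Rgd{} of $C^\vee$ via \eqref{eq:L-R}. I expect no serious obstacle: the substantive inputs, \cref{prop:propagation} and \cref{prop:deletion}, are already available.
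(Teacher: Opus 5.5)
Your proposal is correct and matches the paper's argument. The paper likewise applies \cref{thm:conditional} to the set of nonzero rigid multiplicity free components, using closure under $C\mapsto C^\vee$ and the factorization $\Cm{\m}=\Cm{\m-\Delta}*\Cm{\Delta}$ with $\Cm{\m-\Delta}\scom\Cm{\m}$ from \cref{prop:deletion}, and it obtains the last assertion from \cref{item:si}; your explicit treatment of the zero component, which the paper leaves implicit, is a harmless extra.
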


Indeed, the set $\mathcal S$ of rigid components $\Cm{\m}$ with
$\m\ne0$ multiplicity free is closed under $C\mapsto C^\vee$, and every
non-basic $C=\Cm{\m}\in\mathcal S$ has a \extremal{} segment $\Delta$, e.g.\
one minimizing the sum of its endpoints, so that $C=\Cm{\m-\Delta}*\Cm{\Delta}$
with $\Cm{\m-\Delta}\in\mathcal S$ and $\Cm{\m-\Delta}\scom C$ by
\cref{prop:deletion}. Thus \cref{thm:conditional} applies. The last
assertion is \cref{item:si}.

\section{The multiplicity free converse}\label{sec:converse}

In this section we prove that if $\m$ is multiplicity free and $\Zel(\m)$ is
$\sq$-irreducible, then $\Cm{\m}$ is rigid. Together with \cref{cor:main}
this gives the equivalence of rigidity, $\sq$-irreducibility and
acceptability in the multiplicity free case (\cref{thm:mf-equivalence}).
The argument removes a \extremal{} segment, as in \cref{subsec:mf}, and
controls the commutation of $\Zel(\m)$ with $\Zel(\m-\Delta)$ by means of
standard intertwining operators and an auxiliary ladder representation.

\subsection{Intertwining operators}\label{subsec:intertwiners}
Write $\nu=\lvert\det\rvert$. For nonzero representations $\sigma,\tau$ of
finite length let
\[
 M_{\sigma,\tau}(s):\sigma\nu^s\times\tau\longrightarrow\tau\times\sigma\nu^s
\]
be the standard intertwining operator, let $r_{\sigma,\tau}\ge0$ be the
order of its pole at $s=0$, and let
$\Rlead_{\sigma,\tau}=\lim_{s\to0}s^{r_{\sigma,\tau}}M_{\sigma,\tau}(s)$, a
nonzero intertwining operator $\sigma\times\tau\to\tau\times\sigma$, as in \cite{MR3866895}*{\S2.2}.
We shall use the following standard properties, where $\sigma_1,\sigma_2,\sigma_3$ are nonzero
representations of finite length.
\begin{enumerate}[label=\textup{(I\arabic*)},ref=I\arabic*]
\item\label{item:r-sub}
$r_{\tau,\sigma_2}\le r_{\sigma_1,\sigma_2}$ for every nonzero subquotient $\tau$ of $\sigma_1$. If
$\tau\subset\sigma_1$ is a subrepresentation, the restriction of $\Rlead_{\sigma_1,\sigma_2}$ to
$\tau\times\sigma_2$ is $\Rlead_{\tau,\sigma_2}$ if $r_{\tau,\sigma_2}=r_{\sigma_1,\sigma_2}$, and $0$ otherwise
\cite{MR3866895}*{Lemma 2.3, parts 2 and 3}.
\item\label{item:r-prod}
$r_{\sigma_1\times\sigma_2,\sigma_3}\le r_{\sigma_1,\sigma_3}+r_{\sigma_2,\sigma_3}$, with equality if
$\sigma_3$ is irreducible, in which case
$\Rlead_{\sigma_1\times\sigma_2,\sigma_3}=(\Rlead_{\sigma_1,\sigma_3}\times\mathrm{id}_{\sigma_2})\circ(\mathrm{id}_{\sigma_1}\times\Rlead_{\sigma_2,\sigma_3})$
\cite{MR3866895}*{Lemma 2.3, part 5}.
\item\label{item:r-scalar}
If $\sigma_1$ and $\sigma_2$ are irreducible, then
$M_{\sigma_2\nu^s,\sigma_1\nu^s}(-s)\circ M_{\sigma_1,\sigma_2}(s)$ is a scalar \cite{MR3866895}*{Lemma 2.3, part 6}.
\item\label{item:r-socle}
If $\pi$ is $\sq$-irreducible, then $\Rlead_{\pi,\pi}$ is a nonzero scalar, and for every irreducible
$\sigma$ the image of $\Rlead_{\sigma,\pi}$, respectively $\Rlead_{\pi,\sigma}$, is irreducible and equal to
$\soc(\pi\times\sigma)$, which is also the cosocle of $\sigma\times\pi$, respectively
$\soc(\sigma\times\pi)$, the cosocle of $\pi\times\sigma$,
\cite{MR3866895}*{Corollaries 2.4 and 2.5}.
\end{enumerate}
Moreover, $r_{\sigma,\tau}=r_{\tau^\vee,\sigma^\vee}$ \cite{2508.13817}*{Theorem 4.1, part 5}, so that
\cref{item:r-sub} and \cref{item:r-prod} have analogues for the second argument.

Assume from now on that $\sigma,\tau$ are irreducible.
The composition $M_{\tau\nu^s,\sigma\nu^s}(-s)\circ M_{\sigma,\tau}(s)$ is a scalar
by \cref{item:r-scalar}, and we let $\alpha(\sigma,\tau)\in\Z$
be its order of vanishing at $s=0$ (negative for a pole). This invariant is easy to compute.
It depends only on the cuspidal supports of $\sigma$ and $\tau$ and it is given explicitly by
\begin{equation}\label{eq:alpha}
\alpha\bigl(\bpi(C),\bpi(D)\bigr)=-(\gdim C,\gdim D)
\end{equation}
for any $C,D\in\Comp$ \cite{2508.13817}*{Lemma 1.1, part 1}.

Following \cite{2508.13817}*{\S4} we put
\begin{equation}\label{eq:defect}
 \dfct(\sigma,\tau)=r_{\sigma,\tau}+r_{\tau,\sigma}+\alpha(\sigma,\tau),
\end{equation}
the \emph{defect} of the pair.
It is symmetric and non-negative \cite{2508.13817}*{\S4}. 
It is closely related to the invariant $\mathfrak d(M,N)$ considered by Kang, Kashiwara, Kim and Oh for simple modules over quiver Hecke (KLR)
algebras \cite{MR3758148}*{Definition 3.2.2}.

If $\sigma$ or $\tau$ is $\sq$-irreducible, then
\begin{equation}\label{eq:defect-zero}
\text{$\dfct(\sigma,\tau)=0$ if and only if $\sigma\times\tau$ is irreducible}
\end{equation}
\cite{2508.13817}*{Lemma 4.5}; cf.\ \cite{MR3758148}*{Lemma 3.2.3}.

It was conjectured in \cite{2508.13817}*{Conjecture 4} that for all multisegments $\m,\n$
\begin{subequations}
\begin{equation}\label{eq:droschl-hom}
 r_{\Zel(\m),\Zel(\n)}=\hom_\Pi(\Cm{\m},\Cm{\n})
\end{equation}
and consequently (by \eqref{eq:alpha} and \eqref{eq:cb})
\begin{equation}\label{eq:droschl}
\dfct\bigl(\Zel(\m),\Zel(\n)\bigr)=\ext^1_\Pi(\Cm{\n},\Cm{\m}).
\end{equation}
\end{subequations}
These relations were proved in \cite{2508.13817}*{Theorem 1} (using \cite{MR4847251}*{Theorem 6.4}) in the case where $\Cm{\m}$ or $\Cm{\n}$ is regular and rigid.

\begin{lemma}\label{lem:drop}
Let $\pi$ be $\sq$-irreducible and $\sigma$ irreducible with $\dfct(\pi,\sigma)>0$. Then
\begin{equation}\label{eq:drop}
 \dfct(\pi,\tau)<\dfct(\pi,\sigma)
\end{equation}
for every irreducible subquotient $\tau$ of $\sigma\times\pi$. In particular,
if $\dfct(\pi,\sigma)=1$, then $\pi\times\tau$ is irreducible for every such $\tau$.
\end{lemma}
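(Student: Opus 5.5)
The plan is to bound each term of $\dfct(\pi,\tau)=r_{\pi,\tau}+r_{\tau,\pi}+\alpha(\pi,\tau)$ using \cref{item:r-sub} and \cref{item:r-prod}, which gives a non-strict inequality. Strictness then comes from the leading operators $\Rlead$ and \cref{item:r-socle}.

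\emph{Step 1 (the three terms).} Since $\alpha$ depends only on cuspidal supports and is additive in them by \eqref{eq:alpha}, we have
\[
\alpha(\pi,\tau)=\alpha(\pi,\sigma\times\pi)=\alpha(\pi,\sigma)+\alpha(\pi,\pi).
\]
Because $\pi\times\pi$ is irreducible, \eqref{eq:defect-zero} gives $\dfct(\pi,\pi)=0$, so $\alpha(\pi,\pi)=-2r_{\pi,\pi}$. Next, $\tau$ is a subquotient of $\sigma\times\pi$ and $\pi$ is irreducible. By \cref{item:r-sub} and the equality case of \cref{item:r-prod},
\[
r_{\tau,\pi}\le r_{\sigma\times\pi,\pi}=r_{\sigma,\pi}+r_{\pi,\pi}.
\]
By the second-argument analogues (via $r_{\sigma,\tau}=r_{\tau^\vee,\sigma^\vee}$),
\[
r_{\pi,\tau}\le r_{\pi,\sigma\times\pi}=r_{\pi,\sigma}+r_{\pi,\pi}.
\]
Adding the three relations gives $\dfct(\pi,\tau)\le\dfct(\pi,\sigma)$.

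\emph{Step 2 (strictness; the main obstacle).} Suppose $\dfct(\pi,\tau)=\dfct(\pi,\sigma)$, so both inequalities of Step 1 are equalities. Since $\Rlead_{\pi,\pi}$ is a nonzero scalar, the product formula in \cref{item:r-prod} shows that $\Rlead_{\sigma\times\pi,\pi}$ is, up to a nonzero scalar, $\Rlead_{\sigma,\pi}\times\mathrm{id}_\pi$. Its dual counterpart $\Rlead_{\pi,\sigma\times\pi}$ is similarly $\Rlead_{\pi,\sigma}\times\mathrm{id}_\pi$, up to the identification of $\pi\times\pi$.

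When the pole orders agree, the leading operator of $\sigma\times\pi$ induces on the subquotient $\tau$ the leading operator of $\tau$. This is the subquotient form of \cref{item:r-sub}, which I would justify from the $s$-expansion of $M(s)$ compatible with a filtration of $\sigma\times\pi$ having $\tau$ as a graded piece. Hence $\Rlead_{\tau,\pi}\ne0$ and $\Rlead_{\pi,\tau}\ne0$ must be induced by these maps.

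By \cref{item:r-socle}, the image of $\Rlead_{\pi,\sigma}$ is the irreducible $\soc(\sigma\times\pi)$. So the image of $\Rlead_{\pi,\sigma}\times\mathrm{id}_\pi$ is $\soc(\sigma\times\pi)\times\pi$. For the induced map on $\tau$ to be nonzero, the piece $\tau$ must be hit by this image; tracking filtrations, this forces $\tau\simeq\soc(\sigma\times\pi)$. Symmetrically, the kernel of $\Rlead_{\sigma,\pi}$ is the radical of $\sigma\times\pi$, since its image is the cosocle $\soc(\pi\times\sigma)$. Nonvanishing of the induced map then forces $\tau$ to be the cosocle of $\sigma\times\pi$.

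By \cref{item:si}, $\sigma\times\pi$ is SI, so its socle occurs with multiplicity one. A single Jordan--H\"older factor that is both the socle and the cosocle can only occur in that way if $\sigma\times\pi$ is irreducible. Then \cref{item:irr} gives $\sigma\times\pi\simeq\pi\times\sigma$ irreducible, and \eqref{eq:defect-zero} gives $\dfct(\pi,\sigma)=0$, contradicting the hypothesis. I expect this filtration bookkeeping (socle versus cosocle) to be the delicate part. As a fallback I would compare the order of vanishing of the scalar $M\circ M$ of \cref{item:r-scalar} on $\tau$ with that on $\sigma\times\pi$.

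\emph{Step 3.} If $\dfct(\pi,\sigma)=1$, then by \eqref{eq:drop} and non-negativity $\dfct(\pi,\tau)=0$. Since $\pi$ is $\sq$-irreducible, \eqref{eq:defect-zero} shows that $\pi\times\tau$ is irreducible.
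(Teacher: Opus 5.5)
Your proof is correct and follows the paper's argument. It uses the same non-strict bounds from \cref{item:r-sub} and \cref{item:r-prod}, the same factorization of $\Rlead_{\sigma\times\pi,\pi}$ and $\Rlead_{\pi,\sigma\times\pi}$ through $\Rlead_{\sigma,\pi}$ and $\Rlead_{\pi,\sigma}$ via the scalar $\Rlead_{\pi,\pi}$, and the same fact that an SI representation whose socle is isomorphic to its cosocle is irreducible; the only difference is that the paper splits into cases according to whether $\tau$ is the cosocle of $\sigma\times\pi$ and places $\tau$ inside $\ker\Rlead_{\sigma,\pi}$ or $\ker\Rlead_{\pi,\sigma}\subset\pi\times\sigma$, so it needs only the subrepresentation form of \cref{item:r-sub}, whereas your contrapositive version uses its (standard) subquotient form.
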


\begin{proof}
By \eqref{eq:defect-zero}, $\sigma\times\pi$ is reducible.
By \cref{item:r-socle}, the image of $\Rlead_{\sigma,\pi}$ is
irreducible and is the cosocle of $\sigma\times\pi$, which is isomorphic to
$\soc(\pi\times\sigma)$; likewise for $\Rlead_{\pi,\sigma}$, with the two factors
interchanged. As $\sigma\times\pi$ and $\pi\times\sigma$ are SI by \cref{item:si}
and have the same composition factors, their cosocles occur in them with
multiplicity one.

Suppose first that $\tau$ is not isomorphic to the cosocle of
$\sigma\times\pi$. Then $\tau\simeq U/V$ with $V\subset U\subset\ker \Rlead_{\sigma,\pi}$.
Since $\Rlead_{\pi,\pi}$ is a nonzero scalar by \cref{item:r-socle}, \cref{item:r-prod} shows that
$r_{\sigma\times\pi,\pi}=r_{\sigma,\pi}+r_{\pi,\pi}$ and that $\Rlead_{\sigma\times\pi,\pi}$ is a nonzero
multiple of $\Rlead_{\sigma,\pi}\times\mathrm{id}_\pi$. In particular, $\Rlead_{\sigma\times\pi,\pi}$
vanishes on $U\times\pi$, and \cref{item:r-sub} gives
$r_{\tau,\pi}\le r_{U,\pi}<r_{\sigma\times\pi,\pi}=r_{\sigma,\pi}+r_{\pi,\pi}$.
On the other hand, by the analogues of \cref{item:r-sub} and \cref{item:r-prod} for the second argument,
$r_{\pi,\tau}\le r_{\pi,\sigma\times\pi}\le r_{\pi,\sigma}+r_{\pi,\pi}$. Adding,
and using $\alpha(\pi,\tau)=\alpha(\pi,\sigma)+\alpha(\pi,\pi)$ (by \eqref{eq:alpha}) and
$2r_{\pi,\pi}+\alpha(\pi,\pi)=\dfct(\pi,\pi)=0$ (by \eqref{eq:defect-zero}), we obtain
$\dfct(\pi,\tau)<\dfct(\pi,\sigma)+\dfct(\pi,\pi)=\dfct(\pi,\sigma)$.

If $\tau$ is the cosocle of $\sigma\times\pi$, that is, $\tau\simeq\soc(\pi\times\sigma)$,
then $\tau$ is not isomorphic to the cosocle of $\pi\times\sigma$: otherwise
the socle of $\pi\times\sigma$, which has multiplicity one, would be a direct
summand, and $\pi\times\sigma$ would be irreducible. So $\tau$ is a
subquotient of $\ker \Rlead_{\pi,\sigma}$, and the same argument, with the roles of the two factors interchanged, applies.
The last assertion follows from \eqref{eq:defect-zero}.
\end{proof}

For the record we state the following corollary, although we will not use it here.

\begin{corollary}\label{cor:defect-one}
Let $\pi,\sigma$ be $\sq$-irreducible with $\dfct(\pi,\sigma)=1$. Then each of
$\tau=\soc(\sigma\times\pi)$ and $\tau'=\soc(\pi\times\sigma)$ is
$\sq$-irreducible, and its products with $\pi$ and with $\sigma$ are irreducible.
\end{corollary}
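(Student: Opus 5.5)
By \cref{lem:drop} with $\dfct(\pi,\sigma)=1$, the products $\pi\times\tau$ and $\pi\times\tau'$ are irreducible. I plan to deduce the $\sq$-irreducibility of $\tau$ and $\tau'$ from \cref{item:companion}, and the irreducibility of the products with $\sigma$ from the symmetric roles of $\pi$ and $\sigma$.

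For $\tau=\soc(\sigma\times\pi)$, \cref{lem:drop} applied to the $\sq$-irreducible $\pi$ gives $\dfct(\pi,\tau)<1$, so $\dfct(\pi,\tau)=0$. Hence $\pi\times\tau$ is irreducible by \eqref{eq:defect-zero}, and so is $\tau\times\pi$ by \cref{item:irr}. Now $\pi$ and $\sigma$ enter symmetrically: $\sigma$ is $\sq$-irreducible and $\dfct(\sigma,\pi)=\dfct(\pi,\sigma)=1$ by symmetry of the defect. Applying \cref{lem:drop} with $\sigma$ as the $\sq$-irreducible factor and $\pi$ as the other, and noting that $\tau$ is a subquotient of $\sigma\times\pi$ (which has the same composition factors as $\pi\times\sigma$), we get $\dfct(\sigma,\tau)=0$. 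So $\sigma\times\tau$ and $\tau\times\sigma$ are irreducible as well. The same argument applies verbatim to $\tau'$.

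For $\sq$-irreducibility I apply \cref{item:companion} to $\tau=\soc(\sigma\times\pi)$, with $\pi$ in the role of the $\sq$-irreducible second factor. Since $\tau\times\sigma$ is irreducible, it yields that $\tau$ (and $\sigma$) are $\sq$-irreducible. For $\tau'=\soc(\pi\times\sigma)$ I use \cref{item:companion} with $\sigma$ as the $\sq$-irreducible second factor and $\pi$ as the first. The required irreducibility of $\tau'\times\pi$ was obtained in the previous step, so $\tau'$ is $\sq$-irreducible.

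The only real point to verify is that \cref{lem:drop} may be applied with either factor in the role of the $\sq$-irreducible one. This holds because its hypotheses involve only $\sq$-irreducibility of one argument and the value of the symmetric defect, and because the composition factors of $\sigma\times\pi$ and $\pi\times\sigma$ coincide in the commutative Grothendieck ring. Beyond that, the argument is a short bookkeeping exercise.
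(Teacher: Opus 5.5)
Your proof is correct and follows essentially the same route as the paper. You apply \cref{lem:drop} with each of $\pi,\sigma$ in the $\sq$-irreducible role, using the symmetry of $\dfct$ and the fact that $\sigma\times\pi$ and $\pi\times\sigma$ have the same composition factors, to get the four irreducible products. You then invoke \cref{item:companion} on $\tau=\soc(\sigma\times\pi)$ via $\tau\times\sigma$ and on $\tau'=\soc(\pi\times\sigma)$ via $\tau'\times\pi$.
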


\begin{proof}
\Cref{lem:drop}, applied with each of $\pi,\sigma$ in the role of $\pi$,
gives the irreducibility of the four products. Since $\tau\times\sigma$ is
irreducible, \cref{item:companion}, applied to $\tau=\soc(\sigma\times\pi)$,
shows that $\tau$ is $\sq$-irreducible, and likewise for $\tau'$, using $\tau'\times\pi$.
\end{proof}

\subsection{Auxiliary ladders and \extremal{} deletion}\label{subsec:ladders}

\begin{theorem}\label{thm:realdelete}
Let $\Delta$ be \extremal{} in $\m$ and $\n=\m-\Delta$. If $\Zel(\m)$ is
$\sq$-irreducible, then $\Zel(\m)\times \Zel(\n)$ is irreducible and $\Zel(\n)$ is $\sq$-irreducible.
\end{theorem}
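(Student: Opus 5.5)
The plan follows the outline from the introduction. Put $\pi=\Zel(\m)$ and $\tau=\Zel(\n)$. The goal is to find an auxiliary irreducible $\sigma$, which will be a ladder representation, such that $\dfct(\pi,\sigma)=1$ and $\soc(\sigma\times\pi)\simeq\eta\times\tau$ for some irreducible $\eta$. This is the content of the forthcoming \cref{lem:ladders}.

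Granting such a $\sigma$, the argument is short. Since $\pi$ is $\sq$-irreducible and $\dfct(\pi,\sigma)=1$, \cref{lem:drop} shows that $\pi\times\soc(\sigma\times\pi)=\pi\times\eta\times\tau$ is irreducible. By exactness of $\times$, the subquotient $\pi\times\tau$ of $\pi\times(\eta\times\tau)$ is then irreducible. For the $\sq$-irreducibility of $\tau$, the natural tool is \cref{item:companion}, which needs $\tau$ to appear as a socle of a product with a $\sq$-irreducible factor, together with a commutation.

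I would first try $\eta=\Zel(\Delta)$ arranged so that $\pi=\soc(\tau\times\Zel(\Delta))$. Indeed, by Zelevinsky's ordering with $\Delta$ \extremal{} (nothing in $\n$ precedes $\Delta$), $\Zel(\m)\hookrightarrow \Zel(\n)\times\Zel(\Delta)$. Since $\Zel(\Delta)$ is $\sq$-irreducible, this gives $\pi=\soc(\tau\times\Zel(\Delta))$. Applying \cref{item:companion} with $\sigma\mapsto\tau$ and $\tau\mapsto\Zel(\Delta)$, the irreducibility of $\pi\times\tau$ already established yields that $\tau$ and $\pi$ are $\sq$-irreducible. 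So the only real input is the irreducibility of $\pi\times\tau$, and $\eta$ only has to be some irreducible representation; one can even take $\eta=\one$ if that fits better.

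The main obstacle is constructing $\sigma$. Write $\Delta=[a,b]$. I would take $\sigma$ to be a ladder built from segments adjacent to $\Delta$, for example $\sigma=\Zel(\ladp)$, where $\ladp$ is a ladder whose segments ``link'' with $\Delta$ once in the sense that $\Ext$/$\Hom$ against $\Cm{\Delta}$ contributes exactly one. The defect can then be computed geometrically: since $\Cm{\ladp}$ is regular and rigid, \eqref{eq:droschl} (proved in that case) gives
\[
\dfct\bigl(\pi,\Zel(\ladp)\bigr)=\ext^1_\Pi(\Cm{\m},\Cm{\ladp}).
\]
The left-extremality of $\Delta$, namely the vanishings \eqref{eq:deletion-vanishing}, should force the contributions of $\n$ to cancel, leaving $1$ from $\Delta$. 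For the socle, I would compute $\soc(\Zel(\ladp)\times\pi)$ as $\bpi(\Cm{\ladp}*\Cm{\m})$ using that ladders are \gd{} by \eqref{eq:regular}, so $\SOC(\Cm{\ladp},\Cm{\m})$ holds. It then remains to show $\Cm{\ladp}*\Cm{\m}=\Cm{\ladp'}\oplus\Cm{\n}$ with $\Cm{\ladp'}\scom\Cm{\n}$, where the segment $\Delta$ is absorbed into the ladder. This identification of the product, via the $\Tmap{}{}$-map criterion of \cite{MR4847234}*{\S9} and the vanishings \eqref{eq:deletion-vanishing}, is where I expect the genuine combinatorial work to lie, and the choice of $\ladp$ may need adjusting so that both the defect count and the splitting hold at once.
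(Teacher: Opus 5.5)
Granting \cref{lem:ladders}, your deduction is the paper's own proof. \cref{lem:drop} makes $\pi\times\eta\times\tau$ irreducible, hence $\pi\times\tau$ is irreducible, and \cref{item:companion} applied to $\pi=\soc(\tau\times\Zel(\Delta))$ gives the $\sq$-irreducibility of $\tau$. Three small repairs are needed in that part.
First, $\pi\times\tau$ is not a subquotient of $\pi\times\eta\times\tau$, since they are representations of different groups. The correct argument: if $\pi\times\tau$ had length at least two, so would $\eta\times\pi\times\tau$, by exactness and faithfulness of $\eta\times-$; and $\pi\times\eta\times\tau$ has the same Jordan--H\"older factors.
Second, $\eta=\one$ is impossible for degree reasons, since $\soc(\sigma\times\pi)$ lives on a larger group than $\tau$.
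Third, Zelevinsky's theorem with $\Delta$ placed last only embeds $\Zel(\m)$ into $\Zel(\Delta_1)\times\dots\times\Zel(\Delta_{k-1})\times\Zel(\Delta)$, not directly into $\Zel(\n)\times\Zel(\Delta)$. The latter is true but needs an extra argument. The paper instead reads off $\pi=\soc(\tau\times\Zel(\Delta))$ from $\Cm{\m}=\Cm{\n}*\Cm{\Delta}$ (\cref{prop:deletion}) and $\SOC(\Cm{\n},\Cm{\Delta})$, basic components being \gd{}.

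The genuine gap is \cref{lem:ladders} itself, which you call the main obstacle but do not resolve. No ladder is specified, and neither the defect count nor the splitting of the product is proved. For a ladder made of segments near $\Delta$ inside $\supp\m$, nothing in your sketch makes the contribution of $\n$ to $\ext^1_\Pi$ disappear. Your intuition that $\Delta$ gets absorbed into the ladder is right; the missing idea is to make the ladder long enough to leave $\supp\m$, so that it becomes projective there.

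In the paper's notation (your $\ladp$ is the paper's $\ladq$, your $\ladp'$ is the paper's $\ladp$): take $\Delta=[a,b]$, $\min\supp\m>0$ and $k>\max\supp\m$. Put $\ladp=\sum_{i=0}^k[i,b+i]$, $\ladq=[0,a-1]+\sum_{i=1}^k[i,b+i]$, $\sigma=\Zel(\ladq)$ and $\eta=\Zel(\ladp)$, so that $\Cm{\ladp}=\Cm{\ladq}*\Cm{\Delta}$.
The rigid representative of $\Cm{\ladp}$ is the projective $\Pi_J$-module $p=\Pi_Je_k$ with $J=[0,b+k]$. That of $\Cm{\ladq}$ is $q=p/x_\Delta$, where $x_\Delta$ is $\Mq(\Delta)$ with zero reverse arrows.
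For $w$ supported on $\supp\m$ one has $\Hom_\Pi(p,w)=e_kw=0$ and $\Ext^1_\Pi(p,w)=0$. Hence $\Ext^1_\Pi(q,w)\simeq\Hom_\Pi(x_\Delta,w)$. This vanishes for generic $w\in\Cm{\n}$ by the $\Hom$-vanishing in \eqref{eq:deletion-vanishing}, and is one-dimensional for generic $w\in\Cm{\m}$.
This gives $\Cm{\ladp}\scom\Cm{\n}$ and $\Cm{\ladq}\scom\Cm{\n}$, and, by \eqref{eq:droschl}, $\dfct(\sigma,\pi)=1$.
The splitting is then formal, with no $T$-map combinatorics. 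By \eqref{eq:swap-left} and \eqref{eq:strong-direct-sum}, $\Cm{\ladq}*\Cm{\m}=\Cm{\ladq}*(\Cm{\n}*\Cm{\Delta})=\Cm{\n}*\Cm{\ladp}=\Cm{\n}\oplus\Cm{\ladp}$. Finally \eqref{eq:regular} together with \cref{lem:good-product} gives $\soc(\sigma\times\pi)=\eta\times\tau$.
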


The auxiliary representations are supplied by the following construction.

\begin{lemma}\label{lem:ladders}
Let $\Delta$ be \extremal{} in $\m$, and put $\n=\m-\Delta$, $\pi=\Zel(\m)$ and $\tau=\Zel(\n)$.
Then, there exists a ladder representation $\sigma$ such that $\dfct(\sigma,\pi)=1$ and
$\soc(\sigma\times\pi)=\eta\times\tau$ for some irreducible representation $\eta$.
\end{lemma}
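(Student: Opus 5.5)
The plan is to take $\sigma$ as simple as possible, namely a one-segment (hence ladder) representation $\sigma=\Zel(\Gamma)$ with $\Gamma=[a-1,b-1]$, where $\Delta=[a,b]$. I will fall back on a longer ladder only if the defect turns out to exceed $1$. The advantage of a ladder is that everything about $\sigma$ can be moved to the geometric side. By \eqref{eq:regular}, $\Cm{\Gamma}$ is rigid and \gd{}. Hence $\soc(\sigma\times\pi)=\bpi(\Cm{\Gamma}*\Cm{\m})$, and by \cite{2508.13817}*{Theorem 1}, i.e.\ \eqref{eq:droschl} in the regular rigid case, $\dfct(\sigma,\pi)=\ext^1_\Pi(\Cm{\m},\Cm{\Gamma})$. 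Both requirements of the lemma therefore become statements about $\Pi$-modules.

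The first step is the defect. Since $\Cm{\Gamma}$ is rigid with an interval module $y=\Mq(\Gamma)$ (zero reverse arrows) as rigid representative, I would compute $\ext^1_\Pi(\Cm{\m},\Cm{\Gamma})$ from \eqref{eq:cb}. The term $(\gdim\m,\gdim\Gamma)$ is explicit. The Hom terms I would bound using the map $\Tmap{z}{y}$ of \eqref{eq:T-map} and the endpoint formulas \eqref{eq:segment}, for a generic $z\in\Cm{\m}$ with underlying $Q$-module $\Mq(\m)$. Left-extremality of $\Delta$, i.e.\ \eqref{eq:deletion-vanishing}, should kill all contributions of $\n$ that could interact with $\Gamma$ through the segment $\Delta$. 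What should survive is the single extension $\Ext^1_Q(\Mq(\Gamma),\Mq(\Delta))\ne0$ coming from $\Gamma\prec\Delta$, giving defect exactly $1$. The main obstacle is the segments of $\n$ that precede or are preceded by $\Gamma$ without involving $\Delta$. If these contribute, I would enlarge $\sigma$ to a ladder $\Gamma_1+\dots+\Gamma_k$ with $\Gamma_k=\Gamma$, choosing the extra rungs so that their Hom and Ext contributions cancel in \eqref{eq:cb}. Concretely, I would try the ladder whose rungs are the shifts $\Sh^{-1}$ of the segments of $\m$ lying weakly below $\Delta$ in the order by $a+b$.

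The second step is the socle. By the above, $\soc(\sigma\times\pi)=\bpi(\Cm{\Gamma}*\Cm{\m})$. Using \cref{prop:deletion}, $\Cm{\m}=\Cm{\n}*\Cm{\Delta}$, and I would aim to show
\[
 \Cm{\Gamma}*\Cm{\m}=\Cm{\n}\oplus\Cm{\Gamma+\Delta}.
\]
The idea is that the unique extension gluing $\Gamma$ to $\Delta$ absorbs $\Delta$ into the ladder $\Gamma+\Delta$, while left-extremality makes $\Cm{\Gamma+\Delta}\scom\Cm{\n}$. I would verify this strong commutation via \eqref{eq:cb} and \eqref{eq:deletion-vanishing}, and the identity of components via \eqref{eq:swap-left}/\eqref{eq:associativity} once $\Cm{\Gamma}\scom\Cm{\n}$ has been checked.

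Finally, with $\eta=\Zel(\Gamma+\Delta)$, which is \gd{} since $\Gamma+\Delta$ is a ladder by \eqref{eq:regular}, \cref{lem:good-product}\cref{item:good-commutation} gives $\bpi(\Cm{\n}\oplus\Cm{\Gamma+\Delta})=\eta\times\tau$, irreducible. This yields $\soc(\sigma\times\pi)=\eta\times\tau$. If the ladder has to be enlarged, $\eta$ becomes the corresponding longer ladder and the same argument applies.
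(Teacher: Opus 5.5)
The reduction you describe is the same as the paper's. Once $\sigma$ is a ladder, \eqref{eq:droschl} turns the defect into an $\ext^1_\Pi$ against $\Cm{\m}$. The socle then follows from \eqref{eq:swap-left} and \cref{lem:good-product}, provided the ladder component strongly commutes with $\Cm{\n}$. The gap is the choice of the ladder. Left-extremality only controls how $\n$ interacts with $\Delta$. It says nothing about how $\n$ interacts with $\Gamma=[a-1,b-1]$, and those interactions need not vanish or cancel.

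Here is a counterexample. Take $\m=[0,1]+[1,1]$ and $\Delta=[0,1]$, which minimizes the sum of endpoints and so is \extremal{}. Then $\n=[1,1]$ and $\Gamma=[-1,0]$.
\begin{itemize}
\item Besides $\Gamma\prec\Delta$ you also have $\Gamma\prec[1,1]$, so $\Cm{\Gamma}\nscom\Cm{\n}$.
\item Since $\Mq(\m)$ and $\Mq(\Gamma)$ are rigid $Q$-modules, $\Cm{\m}$ and $\Cm{\Gamma}$ are their orbit closures with zero reverse arrows. Then \eqref{eq:cb} gives $\ext^1_\Pi(\Cm{\m},\Cm{\Gamma})=\dim\Ext^1_Q(\Mq(\Gamma),\Mq(\m))=2$, so $\dfct(\sigma,\pi)=2$, not $1$.
\item Since $\Ext^1_Q(\Mq(\m),\Mq(\Gamma))=0$, every extension of $\Mq(\Gamma)$ by $\Mq(\m)$ is a $Q$-extension. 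The generic one is $\Mq([-1,1]+[0,1])$, so $\soc(\sigma\times\pi)=\Zel([-1,1]+[0,1])$.
\item This is not of the form $\eta\times\Zel([1,1])$, because an irreducible product $\Zel(\m_1)\times\Zel(\m_2)$ equals $\Zel(\m_1+\m_2)$.
\end{itemize}
Your fallback, with rungs the $\Sh^{-1}$-shifts of the segments weakly below $\Delta$, returns $\Gamma$ itself here. The cancellation of the contributions of $\n$ is exactly the step that needs an argument, and none is given. Separately, even when $\Cm{\Gamma}\scom\Cm{\n}$, the nonsplit $Q$-extension gives $\Cm{\Gamma}*\Cm{\Delta}=\Cm{\Gamma\cup\Delta+\Gamma\cap\Delta}$. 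The ladder component $\Cm{\Gamma+\Delta}$ is $\Cm{\Delta}*\Cm{\Gamma}$, so your $\eta$ is also misidentified.

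The missing idea is to make the ladder long enough to pass over all of $\supp\m$, so that its rigid representative is controlled by a projective module.
\begin{itemize}
\item After shifting so that $\min\supp\m>0$, choose $k>\max\supp\m$. The paper takes $\ladq=[0,a-1]+\sum_{i=1}^k[i,b+i]$ and $\ladp=\sum_{i=0}^k[i,b+i]$.
\item The rigid representative of $\Cm{\ladp}$ is the projective $\Pi_J$-module $p=\Pi_Je_k$, with $J=[0,b+k]$. That of $\Cm{\ladq}$ is $q=p/x_\Delta$ with $x_\Delta\simeq\Mq(\Delta)$, and $\Cm{\ladp}=\Cm{\ladq}*\Cm{\Delta}$.
\item For every $w$ supported in $\supp\m$ one has $\Hom_\Pi(p,w)=e_kw=0$ and $\Ext^1_\Pi(p,w)=0$. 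Hence $\Ext^1_\Pi(q,w)\simeq\Hom_\Pi(x_\Delta,w)$.
\end{itemize}
This one isomorphism does all the cancellation at once. By \eqref{eq:deletion-vanishing} it vanishes for generic $w\in\Cm{\n}$ and is one-dimensional for generic $w\in\Cm{\m}$. This gives $\Cm{\ladq}\scom\Cm{\n}$, $\Cm{\ladp}\scom\Cm{\n}$ and $\ext^1_\Pi(\Cm{\ladq},\Cm{\m})=1$. From there your remaining steps go through unchanged with $\sigma=\Zel(\ladq)$ and $\eta=\Zel(\ladp)$.
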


No $\sq$-irreducibility of $\pi$ or $\tau$ is assumed here.

\begin{proof}
Write $\Delta=[a,b]$, $A=\Cm{\n}$ and $C=\Cm{\m}$, so that $C=A*\Cm{\Delta}$ by \cref{prop:deletion}.
Upon shifting the vertex set if necessary, we may assume without loss of generality that $\min\supp\m>0$.
Fix $k>\max\supp\m$ and let
\[
 \ladp=\sum_{i=0}^{k}[i,b+i],\ \  \ladq=[0,a-1]+\sum_{i=1}^{k}[i,b+i].
\]
These are ladders, hence $\Cm{\ladp}$ and $\Cm{\ladq}$ are rigid and \gd{} by \eqref{eq:regular}.
Put $\sigma=\Zel(\ladq)$ and $\eta=\Zel(\ladp)$. Then by \cite{MR4847251}*{Example 2.1}
$\Cm{\ladp}=\Cm{\ladq}*\Cm{\Delta}$ and hence $\eta=\soc(\sigma\times \Zel(\Delta))$ (see also \cite{MR3866895}*{Lemma 3.11}).
(In general, products of a ladder representation with an arbitrary irreducible representation were studied in
\cite{MR3573961}*{\S6.3}; in particular, \cite{MR3573961}*{Proposition 6.20} gives a combinatorial criterion for
the socle of such a product to be $\Zel(\m+\n)$, and hence for its irreducibility; cf.\ \cite{MR3866895}*{Theorem 3.13}.
The situation at hand is easier than the general case.)

Let $J=[0,b+k]$, let $\Pi_J$ be the preprojective algebra of the full subquiver of $Q$ on $J$,
regard $\Pi_J$-modules as $\Pi$-modules vanishing outside $J$, and let $e_k$ be the idempotent at the vertex $k$.
The rigid representative of $\Cm{\ladp}$ is the projective $\Pi_J$-module $p=\Pi_Je_k$, a lamina in the sense of Ringel
\cite{MR1734897}*{\S4}; its quotient $q$ by the submodule $x_\Delta\simeq \Mq(\Delta)$
(with zero reverse arrows) at the bottom of $p$ is the rigid representative of
$\Cm{\ladq}$ \cite{MR4847251}*{\S2.5.1}. For $w$ supported in
$\supp\m\subset J\setminus\{k\}$ we have $\Hom_\Pi(p,w)=e_kw=0$ and
$\Ext^1_\Pi(p,w)=0$, hence $\Ext^1_\Pi(q,w)\simeq\Hom_\Pi(x_\Delta,w)$. This
vanishes for generic $w\in A$ by \eqref{eq:deletion-vanishing}, and is
one-dimensional for generic $w\in C=A*\Cm{\Delta}$, such $w$ being an extension of a generic element of $A$ by $x_\Delta$. Thus
\begin{equation}\label{eq:extaux}
 \Cm{\ladp}\scom A,\qquad\Cm{\ladq}\scom A,\qquad\ext^1_\Pi(\Cm{\ladq},C)=1.
\end{equation}
Therefore, by \eqref{eq:swap-left}, applied to the pair $\Cm{\ladq}\scom A$, and \eqref{eq:strong-direct-sum},
\[
 \Cm{\ladq}*C=\Cm{\ladq}*(A*\Cm{\Delta})=A*(\Cm{\ladq}*\Cm{\Delta})=A*\Cm{\ladp}=A\oplus\Cm{\ladp}.
\]
Since $\Cm{\ladq}$ and $\Cm{\ladp}$ are \gd{},
$\soc(\sigma\times\pi)=\bpi(A\oplus\Cm{\ladp})=\eta\times\tau$ by
\cref{lem:good-product}\cref{item:good-commutation}.

Finally, since $\Cm{\ladq}$ is regular and rigid, we can apply \eqref{eq:droschl}, \eqref{eq:ext-duality} and
\eqref{eq:extaux} to conclude that $\dfct(\sigma,\pi)=\ext^1_\Pi(C,\Cm{\ladq})=\ext^1_\Pi(\Cm{\ladq},C)=1$.
\end{proof}

\begin{proof}[Proof of \cref{thm:realdelete}]
Put $\pi=\Zel(\m)$ and $\tau=\Zel(\n)$, and let $\sigma$ and $\eta$ be as
in \cref{lem:ladders}, so that $\kappa=\soc(\sigma\times\pi)=\eta\times\tau$.
If $\pi$ is $\sq$-irreducible, \cref{lem:drop} shows that $\pi\times\kappa$ is irreducible.
Hence, $\tau\times\pi$ is irreducible. Now
$\pi=\soc\bigl(\tau\times \Zel(\Delta)\bigr)$ with $\Zel(\Delta)$
$\sq$-irreducible, so \cref{item:companion} shows that $\tau$ is
$\sq$-irreducible.\footnote{The $\sq$-irreducibility of $\Zel(\n)$ also follows from
\cite{MR3866895}*{Lemma 3.2}, since a \extremal{} segment is detachable in
the sense of \cite{MR3866895}*{Definition 3.1}.}
\end{proof}

\begin{theorem}\label{thm:mf-equivalence}
For every multiplicity free multisegment $\m$ the following conditions are
equivalent:
\begin{enumerate}[label=\textup{(\alph*)},ref=\alph*]
\item\label{item:mf-rigid}
$\Cm{\m}$ is rigid;
\item\label{item:mf-sq}
$\Zel(\m)$ is $\sq$-irreducible;
\item\label{item:mf-gd}
$\Cm{\m}$ is \gd{}.
\end{enumerate}
\end{theorem}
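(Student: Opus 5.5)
The implications (c)$\Rightarrow$(b) and (a)$\Rightarrow$(c) are already in hand: acceptability includes $\sq$-irreducibility of $\Zel(\m)=\bpi(\Cm{\m})$ by \cref{def:good}, and \cref{cor:main} says that a rigid multiplicity free component is \gd{}. What remains is (b)$\Rightarrow$(a). I will prove it by induction on the number of segments of $\m$. If $\m=0$, or $\m$ is a single segment, then $\Cm{\m}$ is basic and hence rigid.

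For the inductive step, suppose $\m$ is multiplicity free with at least two segments and $\pi=\Zel(\m)$ is $\sq$-irreducible. Choose a segment $\Delta$ of $\m$ minimizing the sum of its endpoints. Since $\m$ is multiplicity free, $\Delta$ is \extremal{}. Put $\n=\m-\Delta$, which is again multiplicity free, with fewer segments. By \cref{thm:realdelete}, $\tau=\Zel(\n)$ is $\sq$-irreducible and $\pi\times\tau$ is irreducible. The induction hypothesis then gives that $A=\Cm{\n}$ is rigid, and therefore \gd{} by \cref{cor:main}.

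Next I transfer the irreducibility of $\pi\times\tau$ to the geometric side. Apply \cref{lem:good-product}\cref{item:good-converse} with the rigid \gd{} component $A$ in the role of $B$ and $X=C=\Cm{\m}$. Since $\bpi(A)\times\bpi(C)=\tau\times\pi$ is irreducible, this gives $C\scom A$. By \cref{prop:deletion}, $C=A*\Cm{\Delta}$ with $\Cm{\Delta}$ basic and hence rigid. Now apply \eqref{eq:rigid-product} with $C_1=A$ and $C_2=\Cm{\Delta}$: because $C\scom A$ and $A$ is rigid, $C$ is rigid, which completes the induction. Note that \cref{lem:good-product}\cref{item:good-converse} requires the rigid factor to be \gd{}; this holds because the induction uses \cref{cor:main} at the smaller stage.

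The real work is \cref{thm:realdelete}, which relies on \cref{lem:ladders} and \cref{lem:drop}, and that is already established. So the remaining obstacle is only bookkeeping. First, the induction must go through multiplicity free multisegments, which holds since removing a segment preserves being multiplicity free. Second, one must check that the hypotheses of \eqref{eq:rigid-product} apply with the factors in the right order, namely $C=C_1*C_2$ with $C_1=A$ and $C\scom C_1$. This matches the factorization $C=A*\Cm{\Delta}$ from \cref{prop:deletion}.
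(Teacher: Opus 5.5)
Your proposal is correct and follows the paper's own argument: induction on the number of segments, deletion of a \extremal{} segment of minimal endpoint sum, then \cref{thm:realdelete}, the induction hypothesis and \cref{cor:main} to make $A=\Cm{\n}$ rigid and acceptable, then \cref{lem:good-product}\cref{item:good-converse} to get $\Cm{\m}\scom A$, and finally \eqref{eq:rigid-product} applied to $\Cm{\m}=A*\Cm{\Delta}$. You pass from irreducibility of $\pi\times\tau$ to that of $\tau\times\pi$ without comment; this is immediate, since the two products have the same composition factors.
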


\begin{proof}
\Cref{item:mf-rigid} implies \cref{item:mf-gd} by \cref{cor:main}, and
\cref{item:mf-gd} implies \cref{item:mf-sq} by definition. We show that
\cref{item:mf-sq} implies \cref{item:mf-rigid} by induction on the number
of segments of $\m$, the cases of at most one segment being clear. Choose
a \extremal{} segment $\Delta$ of $\m$, for instance one minimizing the sum
of its endpoints, and put $\n=\m-\Delta$. By
\cref{thm:realdelete}, $\Zel(\n)$ is $\sq$-irreducible and
$\Zel(\m)\times \Zel(\n)$ is irreducible. By the induction hypothesis $A=\Cm{\n}$ is
rigid, hence \gd{} by \cref{cor:main}, and \cref{lem:good-product}\cref{item:good-converse} gives
$\Cm{\m}\scom A$. Since $\Cm{\m}=A*\Cm{\Delta}$ with $A$ and $\Cm{\Delta}$ rigid,
$\Cm{\m}$ is rigid by \eqref{eq:rigid-product}.
\end{proof}

In particular \eqref{eq:rigid-conj} holds for multiplicity free
components, which is the multiplicity free case of
\cite{MR4847251}*{Conjecture 3.4.C}.

\appendix

\section{A rigid module without a \texorpdfstring{\dist{} submodule}{distinguished submodule}}\label{sec:thirty}

\subsection{The module \texorpdfstring{$X$}{X}}\label{sm:setup}
The purpose of this appendix is to answer in the negative the question,
raised in \cite{MR4847234}*{Remark~8.5}, whether every non-simple rigid
$\Pi$-module has a \dist{} submodule (\cref{def:distinguished}); see
\cref{subsec:conditional}.

As allowed by the conventions of \cref{sec:prelim}, we work in the full
subquiver of $Q$ on the vertices $1,\dots,7$ and with its preprojective
algebra $\Pi$. As in \cref{sec:prelim}, the multisegment $\m_x$ of a
$\Pi$-module $x$ is that of its underlying $Q$-module, and we call the
number of segments of $\m_x$, counted with multiplicity, the number of
\emph{occurrences} of $x$. We write $\langle\cdot,\cdot\rangle$ for
$\langle\cdot,\cdot\rangle_Q$. Set
\[
 \Delta_1=[1,2],\quad \Delta_2=[2,5],\quad \Delta_3=[3,4],\quad
 \Delta_4=[4,7],\quad \Delta_5=[5,6],
\]
where $[a,b]$ also stands for the interval module $\Mq([a,b])$ of
\cref{sec:prelim}.

Let $W=K^3$ be the space of row vectors, let $V=\Mat_3(K)_0=\{\zeta\in\Mat_3(K):\tr\zeta=0\}$
be the space of $3\times3$ matrices of trace zero, and define
\[
 \rowmap_i:V\longrightarrow W,\qquad
 \rowmap_i(\zeta)=\text{the $i$-th row of $\zeta$},
 \quad i=1,2,3.
\]
Each $\rowmap_i$ is surjective, so that $\dim\ker \rowmap_i=5$. Put $M_1=M_2=W$ and
$M_3=M_4=M_5=V$, and
\begin{equation}\label{sm:layers}
 \begin{gathered}
 X_Q=\bigoplus_{i=1}^5\Delta_i\otimes M_i=P_0\oplus P_1\oplus P_2,\\
 P_0=\Delta_1\otimes W,\qquad
 P_1=\Delta_2\otimes W\oplus\Delta_3\otimes V,\qquad
 P_2=\Delta_4\otimes V\oplus\Delta_5\otimes V.
 \end{gathered}
\end{equation}
We call $M_i$, placed at a vertex $v\in\Delta_i$, the \emph{$\Delta_i$-space}
at $v$.

The two interval types in $P_1$, and the two in $P_2$, are strictly nested:
$[3,4]\subset[2,5]$ and $[5,6]\subset[4,7]$. By \eqref{eq:segment} there are no nonzero $Q$-maps and no extensions
between two strictly nested intervals, and an interval module is a brick
without self-extensions. Hence each $P_g$ is a rigid $Q$-module, and
\begin{equation}\label{sm:layer-end}
 \End_Q(P_0)=\End(W),\qquad
 \End_Q(P_1)=\End(W)\oplus\End(V),\qquad
 \End_Q(P_2)=\End(V)\oplus\End(V),
\end{equation}
where $\End(M_i)$ acts on $\Delta_i\otimes M_i$ through the second factor.

For later use we note that for any $Q$-submodules $z_g\subset P_g$ and $z'_k\subset P_k$ we have
\begin{subequations}\label{sm:layer-vanishing}
\begin{align}
 \Hom_Q(z_g,z'_k)&=0\qquad\text{unless $k\le g\le k+1$},
 \label{sm:hom-vanish}\\
 \Ext^1_Q(z_g,z'_k)&=0\qquad\text{unless $g\le k\le g+1$}.
 \label{sm:ext-vanish}
\end{align}
\end{subequations}
This follows from \cref{lem:sub-intervals} and \eqref{eq:segment}.

Identifying $\Hom(M_i,M_j)^*$ with $\Hom(M_j,M_i)$ by the trace pairing, we have
\[
 \Ext^1_Q(X_Q,X_Q)^*
 =\bigoplus_{i,j=1}^5\Ext^1_Q(\Delta_i,\Delta_j)^*\otimes\Hom(M_j,M_i).
\]
By \eqref{eq:segment-ext}, $\Ext^1_Q(\Delta_i,\Delta_j)$ is nonzero exactly for
$(i,j)$ in the set
\[
 \Epairs=\{(1,2),(1,3),(2,4),(2,5),(3,4),(3,5)\}
\]
of six pairs, and then it is one-dimensional. These are exactly the pairs
of interval types in $P_g$ and $P_{g+1}$, $g=0,1$, so that
\begin{equation}\label{sm:ext-decomp}
 \Ext^1_Q(X_Q,X_Q)^*=\Ext^1_Q(P_0,P_1)^*\oplus\Ext^1_Q(P_1,P_2)^*.
\end{equation}
For $(i,j)\in\Epairs$, an element of $\Ext^1_Q(\Delta_i,\Delta_j)^*$
amounts, by \eqref{eq:reverse-space}, to reverse arrow maps $v+1\to v$ from
$\Delta_j$ at $v+1$ to $\Delta_i$ at $v$, for the vertices $v$ with
$v\in\Delta_i$ and $v+1\in\Delta_j$, satisfying the preprojective
relations; these relations force all these maps to be multiplication by
one and the same scalar. We let $\varepsilon_{i,j}$ be the element for which
this scalar is $1$. 
Let $X$ be the $\Pi$-module with underlying $Q$-module $X_Q$ and data
\begin{equation}\label{sm:matrix}
 \dat{X}=\varepsilon_{1,2}\otimes1_W+\varepsilon_{1,3}\otimes \rowmap_1
 +\varepsilon_{2,4}\otimes \rowmap_2+\varepsilon_{2,5}\otimes \rowmap_3
 +\varepsilon_{3,4}\otimes1_V+\varepsilon_{3,5}\otimes1_V.
\end{equation}
Concretely, write $G_{i,j}\in\Hom(M_j,M_i)$ for the coefficient of
$\varepsilon_{i,j}$ in \eqref{sm:matrix}, so that $G_{1,2}=1_W$,
$G_{1,3}=\rowmap_1$, $G_{2,4}=\rowmap_2$, $G_{2,5}=\rowmap_3$ and $G_{3,4}=G_{3,5}=1_V$. At a
reverse arrow $v+1\to v$ the component of the reverse map of $X$ from the
$\Delta_j$-space at $v+1$ to the $\Delta_i$-space at $v$ is $G_{i,j}$ if
$(i,j)\in\Epairs$, $v\in\Delta_i$ and $v+1\in\Delta_j$, and all other
components vanish.

In particular, by \eqref{sm:ext-decomp}, the reverse maps of $X$ map
$P_{g+1}$ into $P_g$ for $g=0,1$ and vanish on $P_0$.

\begin{theorem}[Computer-assisted counterexample]\label{sm:main}
The $\Pi$-module $X$ is indecomposable and rigid, with
\[
 \m_X=3\Delta_1+3\Delta_2+8\Delta_3+8\Delta_4+8\Delta_5,
\]
so that $X$ has thirty occurrences. Moreover, $X$ has no \dist{} submodule: there
is no rigid submodule $0\ne N\subsetneq X$ such that
\[
 \Ext^1_\Pi(X,X/N)=0.
\]
\end{theorem}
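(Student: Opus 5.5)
We would prove the three assertions separately: the shape of $\m_X$, indecomposability together with rigidity, and the absence of a \dist{} submodule. The first two are linear algebra on the explicit data \eqref{sm:matrix}. The third is a finite but large search, and it is where the computer enters.

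\emph{The multisegment.} This is read off \eqref{sm:layers}: $\dim W=3$ and $\dim V=8$, so $\m_X=3\Delta_1+3\Delta_2+8\Delta_3+8\Delta_4+8\Delta_5$, which has thirty occurrences.

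\emph{Rigidity.} By \eqref{eq:rigid-q} it suffices to show $\dim\End_\Pi(X)=\qf(\gdim X)$. We compute $\End_\Pi(X)=\ker\Tmap{X}{}$ via \eqref{eq:T-map}. Here $\End_Q(X_Q)$ splits into a diagonal part $\bigoplus_g\End_Q(P_g)$, given by \eqref{sm:layer-end}, and off-diagonal parts $\Hom_Q(P_g,P_{g-1})$; by \eqref{sm:hom-vanish} no other blocks occur. The target $\Ext^1_Q(X_Q,X_Q)^*$ decomposes as in \eqref{sm:ext-decomp}. Since all maps are explicit matrices, $\dim\ker\Tmap{X}{}$ is a rank computation, which we would do by computer. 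We then compare it with $\qf(\gdim X)$, computed from the Euler form.

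\emph{Indecomposability.} We would show $\End_\Pi(X)$ is local, which can be done by hand. Give $\End_Q(X_Q)$ the grading by layer shift. The off-diagonal components $\Hom_Q(P_g,P_{g-1})$ form a nilpotent ideal, because the layer index strictly drops. Since $\Tmap{X}{}$ respects the grading, an element $(f_0,(a,b),(c,d))$ of the degree-$0$ part lies in the kernel exactly when $f_0G_{i,j}=G_{i,j}f_j$ for all $(i,j)\in\Epairs$.
\begin{itemize}
\item $G_{1,2}=1_W$ forces $f_0=a$.
\item $G_{3,4}=G_{3,5}=1_V$ force $b=c=d$.
\item The conditions for $G_{1,3},G_{2,4},G_{2,5}$ then give $\rowmap_i(b\zeta)=a\,\rowmap_i(\zeta)$ for $i=1,2,3$. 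Hence $b(\zeta)=\zeta a^{\mathsf T}$ on $V$.
\end{itemize}
Right multiplication by $a^{\mathsf T}$ preserves $\tr\zeta=0$ for all $\zeta\in V$ only if $a$ is scalar. So the degree-$0$ part of $\End_\Pi(X)$ is $K$, the remainder lies in the nilpotent ideal, and $\End_\Pi(X)$ is local.

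\emph{No \dist{} submodule.} This is the main obstacle, since the submodules of $X$ come in positive-dimensional families. We would first cut down the possible graded dimensions $\bd=\gdim N$ by necessary numerical conditions.
\begin{itemize}
\item By \cref{lem:ascending}, both $N$ and $X/N$ must be rigid. By \eqref{eq:distinguished}, moreover $\Orb{X}=\Orb{X/N}*\Orb{N}$ with $\Orb{X}\scom\Orb{X/N}$.
\item By \eqref{eq:cb}, the condition $\Ext^1_\Pi(X,X/N)=0$ reads $\hom_\Pi(X,X/N)+\hom_\Pi(X/N,X)=(\gdim X,\gdim X-\bd)$.
\item By \eqref{eq:rigid-q}, $\dim\End_\Pi(N)=\qf(\bd)$ and $\dim\End_\Pi(X/N)=\qf(\gdim X-\bd)$.
\item By \cref{lem:sub-intervals}, the multisegments of $N_Q$ are constrained relative to $\m_X$.
\end{itemize}

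We would also exploit symmetry. $\Aut_\Pi(X)$ acts on the set of \dist{} submodules. By \eqref{eq:rescale}, the twists $X^{(t)}$ are isomorphic to $X$. In addition, $\GL(W)$ acting on $W$ and by conjugation-compatible right multiplication on $V$ should normalize the data \eqref{sm:matrix}. Together these make the search finite up to symmetry.

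For a rigid $N$ the orbit of $N$ is open in its component. So the relevant submodules are those $N\subset X$ whose point in the quiver Grassmannian $\mathrm{Gr}_\bd(X)$ has tangent space $\Hom_\Pi(N,X/N)$ of minimal dimension and whose orbit under $\Aut_\Pi(X)$ is open. For each surviving $\bd$, we would therefore parametrize $\mathrm{Gr}_\bd(X)$, using the layer structure: the reverse maps send $P_{g+1}$ into $P_g$, so a submodule is determined layer by layer. We would then check by computer that no point satisfies rigidity of $N$ together with $\Ext^1_\Pi(X,X/N)=0$. By upper semicontinuity it suffices to bound the relevant $\Ext$ dimensions from below on each cell of a stratification, certified by exact rank computations over $\mathbb Q$ (or over $\Z$ localized away from small primes) with symbolic parameters.

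I expect the hardest step to be organizing this last enumeration into a genuinely finite and verifiable certificate. What I would try first is to use the vanishing pattern \eqref{sm:layer-vanishing} to show that a \dist{} $N$ must be compatible with the layers, that is, $N=\bigoplus_g(N\cap P_g)$ as $Q$-modules. That reduces the search to triples of $Q$-submodules of the rigid modules $P_g$, each a direct sum of nested intervals tensored with subspaces of $W$ and $V$, cut out by the linear conditions imposed by the $G_{i,j}$.
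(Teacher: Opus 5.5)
The multisegment, indecomposability and rigidity parts of your proposal are fine and essentially the paper's; the degree-$0$ computation is the same. You also don't need the computer for rigidity. For a $Q$-map $f$ of degree $-1$, the functional $\Tmap{X}{}(f)$ can only be nonzero on $\Ext^1_Q(P_0,P_2)^*=0$. So all of $J=\Hom_Q(P_1,P_0)\oplus\Hom_Q(P_2,P_1)$ consists of $\Pi$-maps, and $\dim\End_\Pi(X)=1+121=\qf(\gdim X)$.

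The gap is the third assertion, where your plan does not reduce to anything checkable.

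\textbf{The constraints are too weak.} The conditions you list give at best weak constraints on $\bd$ alone, such as $\qf(\bd)\ge1$. The identities $\dim\End_\Pi(N)=\qf(\bd)$ and $\hom_\Pi(X,X/N)+\hom_\Pi(X/N,X)=(\gdim X,\gdim X-\bd)$ involve the unknown structure of $N$, not just $\bd$. You are therefore left with a symbolic search through quiver Grassmannians of an $82$-dimensional module.

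\textbf{Both proposed reductions are wrong.}
\begin{enumerate}
\item There is no $\GL(W)$-symmetry of the data. A layer-preserving $Q$-automorphism fixing $\dat{X}$ is a degree-$0$ element of $\End_\Pi(X)$, and you have just shown these are scalars.
\item A submodule is not determined layer by layer, and no argument from \eqref{sm:layer-vanishing} can show that a \dist{} $N$ satisfies $N=\bigoplus_g(N\cap P_g)$. Being \dist{} is invariant under $\Aut_\Pi(X)$. For graded $N$ and $\theta\in J$, the submodule $(1+\theta)(N)$ is graded only if $\theta(N)\subset N$. This fails in general, e.g.\ for the submodule generated by $\Delta_2\otimes Kw$ with $\theta$ mapping it to $[2,2]\otimes Kw'$, $w'\notin Kw$.
\end{enumerate}
Gradedness can hold only up to an automorphism, and your plan has no mechanism for producing one.

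\textbf{The missing ideas.}
\begin{enumerate}
\item \emph{Cyclicity.} We have $\Ext^1_\Pi(X/N,X)\simeq\Ext^1_\Pi(X,X/N)^*=0$. Applying $\Hom_\Pi(-,X)$ to $0\to N\to X\to X/N\to0$ gives $\Hom_\Pi(N,X)=\End_\Pi(X)\incl$. Hence $N$ is indecomposable, and any isomorphic cyclic submodule is $(1+\theta)(N)$ for some $\theta\in J$.
\item \emph{A torus argument.} The map $\gamma=t^21_{P_0}\oplus t1_{P_1}\oplus1_{P_2}$ is an isomorphism $X^{(t^{-1})}\to X$. So $\gamma(N)$ is again cyclic, and by rigidity of $N$ and \eqref{eq:rescale}, $\gamma(N)\simeq N$. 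Hence $\gamma(N)=(1+\theta_0)(N)$. As $\gamma$ acts on $J$ by $t$ and $J^2=0$, the automorphism $\beta=1+(1-t)^{-1}\theta_0$ satisfies $\gamma\beta(N)=\beta(N)$. So $\beta(N)$ is $\hgr$-graded.
\item \emph{Numerical identities.} For graded $N$, cyclicity yields $\Hom^0_\Pi(N,X)=K\incl$ and $\End^0_\Pi(N)=K$. Together with $\dim\End_\Pi(N)=\qf(\gdim N)$ and the surjectivity of $\Tmap[0]{X}{}$, this gives one quadratic and one linear identity in the multiplicities of the fourteen intervals that can occur in $N_0,N_1,N_2$.
\item \emph{Finite search.} Add the caps from \cref{lem:sub-intervals} and the inequalities forced by the kernels of the $G_{i,j}$. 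The problem becomes a finite search over integer vectors, whose only solution is $\m_N=\m_X$.
\end{enumerate}
Finally, the openness of the $\Aut_\Pi(X)$-orbit of $N$ in $\mathrm{Gr}_\bd(X)$, which you assert without proof, is itself a consequence of cyclicity and the rigidity of $N$.
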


Consequently the hypothesis of \cref{thm:conditional}
fails for the set of all nonzero rigid components (\cref{subsec:conditional}),
and the question of \cite{MR4847234}*{Remark 8.5} has a negative answer.

The proof occupies the rest of the appendix and proceeds as follows. We
first introduce a grading of $\Pi$-modules for which $X$ is graded, and
show that $\End_\Pi(X)=K1\oplus J$ with $J^2=0$; this gives
indecomposability and rigidity. Next, if $N$ is a \dist{} submodule of $X$, then $\Hom_\Pi(N,X)$ is a cyclic $\End_\Pi(X)$-module. We deduce
that $N$ is indecomposable and that, after an automorphism of $X$, $N$ is
a graded submodule of $X$. This gives two Euler identities and six linear
inequalities for the multiplicities of the intervals of $N$. Finally, a
finite enumeration shows that these conditions force $\m_N=\m_X$, that is,
$N=X$.

\subsection{Graded submodules of \texorpdfstring{$X$}{X}}\label{sm:layered-sec}
Grade the path algebra of the double quiver of $Q$ by giving the trivial
paths (the idempotents) and the arrows of $Q$ degree $0$, and the reverse
arrows degree $-1$.
The preprojective
relations are homogeneous, so $\Pi$ inherits a grading, which we call the
\emph{$\hgr$-grading}, to distinguish it from the grading by the vertices.

\begin{definition}\label{sm:layered-def}
An \emph{$\hgr$-graded} $\Pi$-module is a $\Pi$-module $z$ together with a
decomposition $z_Q=\bigoplus_{g\in\Z}z_g$ into $Q$-submodules such that the
reverse maps of $z$ map $z_{g+1}$ into $z_g$ for every $g$; equivalently,
$\dat{z}\in\bigoplus_g\Ext^1_Q(z_g,z_{g+1})^*$. For $\hgr$-graded $\Pi$-modules
$z,z'$ and $k\in\Z$ we put
\begin{equation}\label{sm:layer-diagonal}
 \Hom^k_\Pi(z,z')=\{f\in\Hom_\Pi(z,z'):f(z_g)\subset z'_{g+k}\text{ for all }g\},
\end{equation}
and write $\End^k_\Pi(z)=\Hom^k_\Pi(z,z)$. Define $\Hom^k_Q(z,z')$ in the same way, so that
\[
\Hom^k_Q(z,z')=\bigoplus_g\Hom_Q(z_g,z'_{g+k}).
\]
\end{definition}

By \eqref{sm:ext-decomp}, $X$ is $\hgr$-graded, with $X_g=P_g$ for $g=0,1,2$ and $X_g=0$ otherwise.

For $\hgr$-graded $\Pi$-modules $z,z'$ we write
\[
 \Tmap[0]{z}{z'}:\Hom^0_Q(z,z')\longrightarrow
 \Ext^1_Q(z'_Q,z_Q)^*
\]
for the restriction of the map $\Tmap{z}{z'}$ of \eqref{eq:T-map}, and
$\Tmap[0]{z}{}=\Tmap[0]{z}{z}$. Since $\Hom_\Pi(z,z')=\ker \Tmap{z}{z'}$, we have $\Hom^0_\Pi(z,z')=\ker \Tmap[0]{z}{z'}$.

\begin{lemma}\label{sm:layered}
Let $z,z'$ be $\hgr$-graded submodules of $X$. Then
\begin{subequations}
\begin{gather}
\label{sm:target}\Tmap[0]{z}{z'}\text{ takes values in }\Ext^1_Q(z'_0,z_1)^*\oplus\Ext^1_Q(z'_1,z_2)^*,\\
\label{sm:cross-maps} \Hom_\Pi(z,z')=\Hom^0_\Pi(z,z')\oplus\Hom^{-1}_Q(z,z').
\end{gather}
\end{subequations}
In particular, every $Q$-map of degree $-1$ from $z$ to $z'$ is a $\Pi$-map.
\end{lemma}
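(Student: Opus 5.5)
The plan is to decompose everything according to the $\hgr$-grading and check that $\Tmap{z}{z'}$ is homogeneous, with one particular degree killed outright by the vanishing statements \eqref{sm:layer-vanishing}. Write $z_Q=z_0\oplus z_1\oplus z_2$ and $z'_Q=z'_0\oplus z'_1\oplus z'_2$ with $z_g,z'_g\subset P_g$. The graded pieces are $Q$-submodules of the $P_g$, so \eqref{sm:layer-vanishing} applies to all of them.

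First, \eqref{sm:hom-vanish} shows that $\Hom_Q(z_g,z'_h)=0$ unless $h\in\{g,g-1\}$. Hence
\[
\Hom_Q(z,z')=\Hom^0_Q(z,z')\oplus\Hom^{-1}_Q(z,z').
\]
Likewise, \eqref{sm:ext-vanish} shows that
\[
\Ext^1_Q(z'_Q,z_Q)=\bigoplus_{h\le g\le h+1}\Ext^1_Q(z'_h,z_g).
\]

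Next I compute the degree of $\Tmap{z}{z'}(f)$ for $f$ homogeneous of degree $k$, using \eqref{eq:T-map}. Take $\xi\in\Ext^1_Q(z'_h,z_g)$.
\begin{itemize}
\item The pullback $f^*\xi$ lies in $\Ext^1_Q(z_{h-k},z_g)$. Since $\dat{z}\in\bigoplus_g\Ext^1_Q(z_g,z_{g+1})^*$, the pairing $\dat{z}(f^*\xi)$ can be nonzero only if $g=h-k+1$.
\item The pushforward $f_*\xi$ lies in $\Ext^1_Q(z'_h,z'_{g+k})$. The pairing $\dat{z'}(f_*\xi)$ can be nonzero only if $g+k=h+1$.
\end{itemize}
Both conditions say $g=h+1-k$. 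So $\Tmap{z}{z'}(f)$ is supported on $\bigoplus_h\Ext^1_Q(z'_h,z_{h+1-k})^*$.

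For $k=0$, these summands are $\Ext^1_Q(z'_h,z_{h+1})$. Only $h=0,1$ can occur, because the grading is concentrated in degrees $0,1,2$, and this gives \eqref{sm:target}.

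For $k=-1$, the summands are $\Ext^1_Q(z'_h,z_{h+2})$, and these vanish by \eqref{sm:ext-vanish} since $h+2>h+1$. Therefore $\Tmap{z}{z'}$ is identically zero on $\Hom^{-1}_Q(z,z')$, and every $Q$-map of degree $-1$ lies in $\ker\Tmap{z}{z'}=\Hom_\Pi(z,z')$.

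It follows that $\ker\Tmap{z}{z'}=\ker\Tmap[0]{z}{z'}\oplus\Hom^{-1}_Q(z,z')$, and $\ker\Tmap[0]{z}{z'}=\Hom^0_\Pi(z,z')$, which is \eqref{sm:cross-maps}.

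The only delicate step is the bookkeeping of pushforward and pullback indices, in particular confirming that pullback along a degree-$k$ map shifts the first argument by $-k$. This is also where the nestedness of the layers, through \eqref{sm:ext-vanish}, does the real work.
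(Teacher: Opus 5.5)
Your proposal is correct and follows essentially the same route as the paper. You track $\hgr$-degrees through the pullback and pushforward terms of $\Tmap{z}{z'}$ to see that a degree-$k$ map only pairs with $\Ext^1_Q(z'_h,z_{h+1-k})$; this yields \eqref{sm:target} for $k=0$ and forces $\Tmap{z}{z'}=0$ for $k=-1$ via \eqref{sm:ext-vanish}, and you conclude with the splitting $\Hom_Q=\Hom^0_Q\oplus\Hom^{-1}_Q$ from \eqref{sm:hom-vanish} (the paper merely observes that the only surviving degree $-1$ component is $\Ext^1_Q(z'_0,z_2)$, where you kill all $\Ext^1_Q(z'_h,z_{h+2})$ at once).
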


\begin{proof}
Since $\dat{z}$ and $\dat{z'}$ lie in the components of degree $(g,g+1)$, for
$f\in\Hom_Q(z_g,z'_{g+l})$ the functional $\Tmap{z}{z'}(f)$ can be nonzero only
on components $\Ext^1_Q(z'_h,z_k)$ with $k-h=1-l$. For $l=0$ this is
\eqref{sm:target}. For $l=-1$ the only such component is
$\Ext^1_Q(z'_0,z_2)$, which vanishes by \eqref{sm:layer-vanishing}, so
$\Tmap{z}{z'}(f)=0$. As $\Hom_Q(z_Q,z'_Q)=\Hom^0_Q(z,z')\oplus\Hom^{-1}_Q(z,z')$
by \eqref{sm:hom-vanish}, and $\Hom_\Pi=\ker T$, this proves \eqref{sm:cross-maps}.
\end{proof}

\begin{proposition}\label{sm:end}
The module $X$ is indecomposable and rigid, with
\[
 \gdim X=(3,6,11,19,19,16,8),\qquad\dim\End_\Pi(X)=\qf(\gdim X)=122.
\]
Moreover, $\End_\Pi(X)=K1\oplus J$ where
\[
J=\End^{-1}_\Pi(X)=\Hom_Q(P_1,P_0)\oplus\Hom_Q(P_2,P_1),\qquad J^2=0,\qquad \dim J=121.
\]
Finally, the map
\begin{equation}\label{sm:full}
 \Tmap[0]{X}{}:\bigoplus_{g=0}^2\End_Q(P_g)\longrightarrow
 \Ext^1_Q(P_0,P_1)^*\oplus\Ext^1_Q(P_1,P_2)^*
\end{equation}
is surjective.
\end{proposition}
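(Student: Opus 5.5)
The plan is to compute $\End_\Pi(X)=\ker\Tmap{X}{}$ degree by degree in the $\hgr$-grading, then to read off indecomposability from locality and rigidity from \eqref{eq:rigid-q}. First, $\gdim X$ is immediate from \eqref{sm:layers}. The space $W$ has dimension $3$ and contributes via $\Delta_1,\Delta_2$; the space $V$ has dimension $8$ and contributes via $\Delta_3,\Delta_4,\Delta_5$. Summing vertexwise gives $(3,6,11,19,19,16,8)$. Then $\qf(\gdim X)=\sum d_i^2-\sum d_id_{i+1}=1208-1086=122$, a routine check.

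Next I would apply \cref{sm:layered} with $z=z'=X$, which gives $\End_\Pi(X)=\End^0_\Pi(X)\oplus\Hom^{-1}_Q(X,X)$. By \eqref{sm:hom-vanish}, $\Hom^{-1}_Q(X,X)=\Hom_Q(P_1,P_0)\oplus\Hom_Q(P_2,P_1)$, and all of it lies in $\End_\Pi(X)$. Its dimension comes from \eqref{eq:segment-hom}: the nonzero interval Homs are $\Delta_2\to\Delta_1$, $\Delta_4\to\Delta_2$, $\Delta_4\to\Delta_3$ and $\Delta_5\to\Delta_2$, each one-dimensional, while $\Delta_3\to\Delta_1$ and $\Delta_5\to\Delta_3$ vanish. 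Tensoring with $\Hom(M_j,M_i)$ gives $9+24+64+24=121$. The product of two degree $-1$ maps has degree $-2$, i.e.\ it is a map $P_2\to P_0$, and this vanishes by \eqref{sm:hom-vanish}; hence $J^2=0$.

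The main step is to show $\End^0_\Pi(X)=K\cdot1$, i.e.\ that $\ker\Tmap[0]{X}{}$ is one-dimensional. Write a degree-$0$ $Q$-endomorphism as $(a,b,c,d,e)$ with $a,b\in\End(W)$ acting on the $\Delta_1,\Delta_2$ summands and $c,d,e\in\End(V)$ acting on the $\Delta_3,\Delta_4,\Delta_5$ summands, using \eqref{sm:layer-end}. The component of $\Tmap[0]{X}{}(f)$ along $\varepsilon_{i,j}$ is, up to sign, $G_{i,j}f_j-f_iG_{i,j}$. Vanishing therefore amounts to
\[
a=b,\qquad c=d=e,\qquad \rowmap_1c=a\rowmap_1,\quad \rowmap_2c=b\rowmap_2,\quad \rowmap_3c=b\rowmap_3 .
\]
Since the three rows together give the inclusion $V\hookrightarrow\Mat_3(K)$, the last three conditions say $c(\zeta)=\zeta a$ for every traceless $\zeta$, with $a$ acting on row vectors on the right. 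For $c$ to map $V$ into $V$ we need $\tr(\zeta a)=0$ for every traceless $\zeta$, which forces $a$ to be scalar, and then $c$, $d$, $e$, $b$ are the same scalar. So the kernel is $K\cdot1$.

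It remains to count dimensions. The source of \eqref{sm:full} has dimension $9+(9+64)+(64+64)=210$. By \eqref{eq:segment-ext} and \eqref{sm:ext-decomp}, the target has dimension $9+24+24+24+64+64=209$. Rank--nullity then yields surjectivity of \eqref{sm:full}. I expect the only delicate point to be getting the sign and composition conventions in \eqref{eq:T-map} right so that the kernel equations come out in the form above; a convention slip would only replace $a$ by its transpose and would not change the conclusion. Finally, $\End_\Pi(X)=K1\oplus J$ with $J$ a square-zero ideal is local, so $X$ is indecomposable. Its dimension is $1+121=122=\qf(\gdim X)$, so $X$ is rigid by \eqref{eq:rigid-q}.
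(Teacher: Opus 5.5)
Your proof is correct and follows essentially the same route as the paper. You split $\End_\Pi(X)$ via \eqref{sm:cross-maps} into the degree-$0$ kernel of $\Tmap[0]{X}{}$ and $J=\Hom^{-1}_Q(X,X)$, count $\dim J=121$ from \eqref{eq:segment-hom}, reduce the degree-$0$ part to the commutation equations $u_iG_{i,j}=G_{i,j}u_j$, and finish with rank--nullity ($210-1=209$) and \eqref{eq:rigid-q}. The only cosmetic difference is the last linear-algebra step: you use nondegeneracy of the trace form on $\Mat_3(K)$, whereas the paper identifies $V$ with the hyperplane $H\subset W^3$ and uses that its defining functional is an eigenvector of the transpose of $\diag(\phi,\phi,\phi)$; the two arguments are equivalent.
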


\begin{proof}
We first determine $\End_\Pi(X)$. By \eqref{sm:cross-maps},
$\End_\Pi(X)=\End^0_\Pi(X)\oplus J$ with
$J=\End^{-1}_\Pi(X)=\Hom_Q(P_1,P_0)\oplus\Hom_Q(P_2,P_1)$. By
\eqref{eq:segment-hom}, the only nonzero $Q$-maps between interval types in
$P_g$ and in $P_{g-1}$ are
\[
 \Delta_2\longrightarrow \Delta_1,\qquad \Delta_4,\Delta_5\longrightarrow \Delta_2,\qquad \Delta_4\longrightarrow \Delta_3,
\]
so that $\dim J=9+24+24+64=121$. Products of two elements of $J$ have
degree $-2$, and so vanish by \eqref{sm:hom-vanish}.

It remains to show that $\End^0_\Pi(X)=\ker \Tmap[0]{X}{}$ is $K1$. By \eqref{sm:layer-end}, any element $u$
of $\bigoplus_g\End_Q(P_g)$ can be written as
\[
u=\bigoplus_i1_{\Delta_i}\otimes u_i\text{ with }u_i\in\End(M_i),
\]
and $u\in\ker \Tmap[0]{X}{}$ if and only if $u$ commutes with the
reverse maps of $X$. Since $u$ acts on the $\Delta_i$-spaces by $u_i$, and
the components of the reverse maps are the $G_{i,j}$, by the description
following \eqref{sm:matrix}, this means that
\[
 u_iG_{i,j}=G_{i,j}u_j\qquad\text{for }(i,j)\in\Epairs.
\]
The pair $(1,2)$ gives $u_1=u_2=:\phi\in\End(W)$, the pairs
$(3,4)$ and $(3,5)$ give $u_3=u_4=u_5=:\psi\in\End(V)$, and the remaining three pairs
give
\[
 \phi \rowmap_i=\rowmap_i\psi\qquad(i=1,2,3).
\]
The map $\rowmap=(\rowmap_1,\rowmap_2,\rowmap_3):V\to W^3$ is injective, with image the hyperplane
\[
 H=\{(w_1,w_2,w_3)\in W^3:e_1^*w_1+e_2^*w_2+e_3^*w_3=0\}.
\]
Identifying $V$ with $H$ by $\rowmap$, these three equations say that $\psi$ is
the restriction of $\diag(\phi,\phi,\phi)$ to $H$. In
particular $\diag(\phi,\phi,\phi)$ preserves $H$, so the functional
$(e_1^*,e_2^*,e_3^*)$ defining $H$ is an eigenvector of its transpose:
$e_i^*\phi=te_i^*$ for a scalar $t$ and $i=1,2,3$. Thus $\phi=t1$, and
$\psi=t1$.

Consequently, $\End_\Pi(X)=K1\oplus J$ is local, so
$X$ is indecomposable. The dimension vector follows from \eqref{sm:layers},
and a direct computation gives $\qf(\gdim X)=122=1+121$, so $X$ is rigid by
\eqref{eq:rigid-q}.

Finally, the source of $\Tmap[0]{X}{}$ has dimension
$9+(9+64)+(64+64)=210$ by \eqref{sm:layer-end}. The target is the sum of
the spaces $\Ext^1_Q(\Delta_i,\Delta_j)^*\otimes\Hom(M_j,M_i)$ over
$(i,j)\in\Epairs$, by \eqref{sm:ext-decomp}. Its dimension is $9+24+24+24+64+64=209$. As
$\ker \Tmap[0]{X}{}=K1$, the map $\Tmap[0]{X}{}$ is surjective.
\end{proof}

\subsection{\texorpdfstring{\Dist{} submodules}{Distinguished submodules}}\label{sm:cyclicity}
We say that a submodule $N$ of $X$, with inclusion $\incl$, is \emph{cyclic} if
\[
 \Hom_\Pi(N,X)=\End_\Pi(X)\,\incl.
\]
If $N$ is cyclic, then so is $\beta(N)$ for every $\beta\in\Aut_\Pi(X)$.
Since $\End_\Pi(X)=K1\oplus J$ is local by \cref{sm:end}, every
nonzero cyclic submodule of $X$ is indecomposable. Moreover, if $N$ and $N'$
are isomorphic nonzero cyclic submodules of $X$, then
\begin{equation}\label{sm:cyclic-unique}
 N'=(1+\theta)(N)\qquad\text{for some }\theta\in J.
\end{equation}
This is an instance of the uniqueness of left minimal versions of a
morphism \cite{MR1314422}*{Chapter I, \S2}.

Suppose, towards a contradiction, that $N$ is a \dist{} submodule of $X$, with
inclusion $\incl$, and put $Y=X/N$, so that $\Ext^1_\Pi(X,Y)=0$. Applying
$\Hom_\Pi(-,X)$ and using $\Ext^1_\Pi(Y,X)\simeq\Ext^1_\Pi(X,Y)^*=0$, by
\eqref{eq:ext-duality}, shows that $N$ is cyclic:
\begin{equation}\label{sm:cyclic}
 \Hom_\Pi(N,X)=\End_\Pi(X)\,\incl=K\incl\oplus J\incl,
\end{equation}
the second equality by \cref{sm:end}.

\begin{proposition}[Grading cyclic submodules]\label{sm:grading}
Let $N$ be a nonzero cyclic submodule of $X$ such that $N^{(t)}\simeq N$ for
all $t\in K^\times$; by \eqref{eq:rescale}, this holds if $N$ is rigid. Then
there is $\beta\in\Aut_\Pi(X)$ such that $\beta(N)$ is an $\hgr$-graded submodule
of $X$.
\end{proposition}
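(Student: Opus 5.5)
The plan is to use the torus action coming from the $\hgr$-grading of $X$, and then kill a cocycle with values in the unipotent group $1+J$.

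First, for $t\in K^\times$ define $h_t\colon X_Q\to X_Q$ to be multiplication by $t^{-g}$ on $X_g=P_g$. Since the reverse maps of $X$ have degree $-1$, a one-line check shows that $h_t$ is a $\Pi$-isomorphism $X\to X^{(t)}$. The twist is the identity on underlying spaces and preserves submodules, so $h_t(N)$ is again a $\Pi$-submodule of $X$, and as such it is isomorphic to $(N^{(t)})$ twisted by $t^{-1}$, roughly $N^{(t^{-1})}\simeq N$. Conjugation by $h_t$ preserves $\End_\Pi(X)$ and is the identity on $K1$, so cyclicity of $N$ is transported and $h_t(N)$ is a cyclic submodule isomorphic to $N$. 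By \eqref{sm:cyclic-unique} there is $\theta_t\in J$ with
\[
h_t(N)=(1+\theta_t)(N).
\]
Next, since every element of $J$ has degree $-1$, we have $h_t\theta h_t^{-1}=t\,\theta$ for $\theta\in J$; I will check this exponent sign in the actual proof. Hence the group $G$ generated by the $h_t$ and $1+J$ is $K^\times\ltimes(1+J)$, and $1+J\simeq J$ is a vector group because $J^2=0$.

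Let $L=\{\theta\in J:\theta(N)\subset N\}$. Because $\theta(N)\subset JX$ and $\theta$ is $K$-linear, $(1+\theta)(N)=N$ holds exactly when $\theta\in L$, so $L$ is a linear subspace. The class $c(t)$ of $-\theta_t$ (or $\theta_t$, up to convention) in $J/L$ is then well defined. The identity $h_{st}=h_sh_t$ gives a cocycle relation $c(st)=c(s)+s\,c(t)$ for the weight-one action of $K^\times$ on $J/L$; note that $L$ is $h_t$-stable because $h_t(N)$ is a $(1+J)$-translate of $N$.

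This cohomology class vanishes, and I expect this to be the main step, though it is elementary. Choose $t_0\neq1$, which is possible since $K$ is infinite. Comparing $c(st_0)=c(ts_0)$ in the two orders gives $c(s)(t_0-1)=c(t_0)(s-1)$. So $c(s)=(s-1)v$ with $v=c(t_0)/(t_0-1)$, i.e.\ $c$ is a coboundary. Lift $v$ to $\theta\in J$ and put $\beta=1+\theta$, with the sign adjusted so that the computation works; this lies in $\Aut_\Pi(X)$. Then $\beta h_t\beta^{-1}$ differs from $h_t$ by the factor $1+(1-t)\theta$ or similar, and by construction $h_t(\beta N)=\beta N$ for all $t$.

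Finally, a subspace of $X_Q=\bigoplus_{g=0}^2P_g$ that is stable under all $h_t$ is the direct sum of its intersections with the $P_g$. This follows by taking three distinct values of $t$ and applying a Vandermonde argument. The components $\beta(N)\cap P_g$ are $Q$-submodules because the $P_g$ are $Q$-submodules and $\beta(N)$ is a $\Pi$-submodule. Since the reverse maps send $P_{g+1}$ into $P_g$, the decomposition $\beta(N)=\bigoplus_g\beta(N)\cap P_g$ makes $\beta(N)$ an $\hgr$-graded submodule of $X$.
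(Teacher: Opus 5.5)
Your argument is correct, but it takes a more elaborate route than the paper.

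\textbf{The paper's argument.} The paper fixes a single $t\ne\pm1$ and uses $\gamma=t^21_{P_0}\oplus t1_{P_1}\oplus1_{P_2}$. This is $t^2h_t$ in your notation, so $\gamma(N)=h_t(N)$. It picks one $\theta_0\in J$ with $\gamma(N)=(1+\theta_0)(N)$ and solves the fixed-point equation $\theta=t\theta+\theta_0$ exactly in $J$, by $\theta=(1-t)^{-1}\theta_0$. Then $\beta=1+\theta$ satisfies $\gamma(\beta(N))=\beta(N)$. Since $\gamma$ acts on $P_0,P_1,P_2$ by the distinct scalars $t^2,t,1$, the eigenspace decomposition of this one operator already shows that $\beta(N)$ is $\hgr$-graded.

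\textbf{How your route differs.} Because the paper uses only one $t$, it never compares the $\theta_t$ for different $t$. So it needs neither the stabilizer $L$, nor the quotient $J/L$, nor the cocycle identity. Your computation $c(s)(t_0-1)=c(t_0)(s-1)$ shows that the solution found for one $t_0$ automatically works for every $s$, which is more than is needed. An $h_{t_0}$-stable subspace is stable under all powers of $h_{t_0}$, so your Vandermonde step could be run with a single $t_0$ for which $1,t_0^{-1},t_0^{-2}$ are distinct.

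What your version buys is the conceptual picture: a torus-stable orbit of the vector group $1+J$ has a torus-fixed point, because $H^1$ of $K^\times$ with coefficients in a nontrivial weight vanishes. You also get invariance under the whole torus. The paper's version buys brevity, and uses the hypothesis $N^{(t)}\simeq N$ for a single value of $t$ only.

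\textbf{The signs you left open.}
\begin{itemize}
\item $h_t\theta h_t^{-1}=t\theta$ for $\theta\in J$, as you wrote.
\item With $c(t)$ the class of $\theta_t$, you must take $\beta=1-\tilde v$ for a lift $\tilde v\in J$ of $v$. Indeed $h_t\bigl((1+\theta')(N)\bigr)=(1+t\theta'+\theta_t)(N)$ equals $(1+\theta')(N)$ exactly when $\theta_t\equiv(1-t)\theta'$ modulo $L$. This recovers the paper's $\beta=1+(1-t_0)^{-1}\theta_{t_0}$.
\end{itemize}

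\textbf{Two small justifications.} The reason that $(1+\theta)(N)=N$ if and only if $\theta\in L$ is simply that $\theta n=(1+\theta)n-n$, together with injectivity of $1+\theta$; it has nothing to do with $\theta(N)\subset JX$. Also, $L$ is stable under the weight-one action automatically, since that action is scalar multiplication on a linear subspace.
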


\begin{proof}
Fix $t\in K^\times$ with $t\ne\pm1$, so that $t^2$, $t$ and $1$ are pairwise
distinct, and let $\gamma=t^21_{P_0}\oplus t1_{P_1}\oplus1_{P_2}\in\Aut_Q(X_Q)$.
Since the reverse maps of $X$ lower the $\hgr$-degree by one, conjugation
by $\gamma$ multiplies each of them by $t$. In other words, $\gamma$ is an
isomorphism of $\Pi$-modules $X^{(t^{-1})}\to X$. As the twist
$z\mapsto z^{(t^{-1})}$ is the identity on morphisms, $X^{(t^{-1})}$ has
the same submodules and the same endomorphisms as $X$. Hence:
\begin{enumerate}[label=\textup{(\alph*)},ref=\alph*]
\item\label{item:gamma-sub}
$\gamma$ maps every submodule $N$ of $X$ onto a submodule $\gamma(N)$, and
restricts to an isomorphism $N^{(t^{-1})}\to\gamma(N)$;
\item\label{item:gamma-end}
$\phi\mapsto\gamma\phi\gamma^{-1}$ is an automorphism of $\End_\Pi(X)$;
as the elements of $J=\End^{-1}_\Pi(X)$ lower the $\hgr$-degree by one, it
is multiplication by $t$ on $J$.
\end{enumerate}
By transport of structure along $\gamma$, $\gamma(N)$ is again cyclic, and
$\gamma(N)\simeq N^{(t^{-1})}\simeq N$. By \eqref{sm:cyclic-unique},
$\gamma(N)=(1+\theta_0)(N)$ for some $\theta_0\in J$. Since $J^2=0$, for $\theta\in J$ we
have
\[
 \gamma\bigl((1+\theta)(N)\bigr)=(1+t\theta)\,\gamma(N)=(1+t\theta)(1+\theta_0)(N)=(1+t\theta+\theta_0)(N).
\]
For $\theta=(1-t)^{-1}\theta_0$ we have $t\theta+\theta_0=\theta$, so that $\beta=1+\theta\in\Aut_\Pi(X)$
satisfies $\gamma(\beta(N))=\beta(N)$. Since $\gamma$ acts on $P_0,P_1,P_2$ by the distinct
scalars $t^2,t,1$, it follows that $\beta(N)=\bigoplus_g\bigl(\beta(N)\cap P_g\bigr)$,
that is, $\beta(N)$ is $\hgr$-graded.
\end{proof}

\subsection{Numerical constraints}\label{sm:labels}
By \cref{sm:grading} we may, and do, assume that $N$ is an $\hgr$-graded
submodule of $X$, so that $N_g=N\cap P_g$. We write $\boldsymbol n_g=\gdim N_g$ and
$\boldsymbol p_g=\gdim P_g$ for the dimension vectors, so that by \eqref{sm:layers}
\[
 \boldsymbol p_0=(3,3,0,0,0,0,0),\qquad \boldsymbol p_1=(0,3,11,11,3,0,0),\qquad \boldsymbol p_2=(0,0,0,8,16,16,8).
\]

By \eqref{sm:cyclic}, $\Hom_\Pi(N,X)=K\incl\oplus J\incl$. Since
$J=\End^{-1}_\Pi(X)$, we have $J\incl\subset\Hom^{-1}_\Pi(N,X)$, so that
$\Hom^0_\Pi(N,X)=K\incl$ is one-dimensional. If $f\in\End^0_\Pi(N)$, then
$\incl f\in\Hom^0_\Pi(N,X)$, and $f\mapsto \incl f$ is injective. Hence
$\End^0_\Pi(N)=\ker \Tmap[0]{N}{}$ is also one-dimensional, spanned by $1_N$.

We have the following Euler identities for $N$.
\begin{lemma}\label{sm:euler}
Each $N_g$ is rigid as a $Q$-module.
Moreover,
\begin{subequations}
\begin{gather}
\label{sm:q0}
 \sum_{g=0}^2\qf(\boldsymbol n_g)+\langle \boldsymbol n_0,\boldsymbol n_1\rangle+\langle \boldsymbol n_1,\boldsymbol n_2\rangle=1,\\
 \label{sm:lin}
 \sum_{g=0}^2\langle \boldsymbol n_g,\boldsymbol p_g\rangle+\langle \boldsymbol p_0,\boldsymbol n_1\rangle+\langle \boldsymbol p_1,\boldsymbol n_2\rangle=1.
\end{gather}
\end{subequations}
\end{lemma}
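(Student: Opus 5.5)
The plan is to compare two computations of the same endomorphism and homomorphism spaces: one via the $\hgr$-grading (\cref{sm:layered}), one via Euler forms. The cross-layer $Q$-terms are killed throughout by \eqref{sm:layer-vanishing}.

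\emph{Identity \eqref{sm:q0}.} Since $N$ is \dist{}, it is rigid, so $\dim\End_\Pi(N)=\qf(\gdim N)$ by \eqref{eq:rigid-q}. By \eqref{sm:cross-maps} and the fact, established just before the lemma, that $\End^0_\Pi(N)=K1_N$, we get
\[
\dim\End_\Pi(N)=1+\sum_g\dim\Hom_Q(N_{g+1},N_g).
\]
By \eqref{sm:ext-vanish}, $\Ext^1_Q(N_{g+1},N_g)=0$, so by \eqref{eq:euler-Q} this sum equals $\langle\boldsymbol n_1,\boldsymbol n_0\rangle+\langle\boldsymbol n_2,\boldsymbol n_1\rangle$.

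Next expand $\qf(\boldsymbol n_0+\boldsymbol n_1+\boldsymbol n_2)$ bilinearly. The terms $\langle\boldsymbol n_0,\boldsymbol n_2\rangle$ and $\langle\boldsymbol n_2,\boldsymbol n_0\rangle$ vanish, since both the Hom and the Ext between $N_0$ and $N_2$ (in either order) vanish by \eqref{sm:layer-vanishing}. Comparing the two expressions and cancelling the terms $\langle\boldsymbol n_{g+1},\boldsymbol n_g\rangle$ gives \eqref{sm:q0}.

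\emph{Rigidity of each $N_g$.} Use $\ker\Tmap[0]{N}{}=K1_N$ together with \eqref{sm:target}. Rank--nullity gives
\[
\sum_g\dim\End_Q(N_g)-1\le\dim\Ext^1_Q(N_0,N_1)+\dim\Ext^1_Q(N_1,N_2).
\]
Now rewrite each term with \eqref{eq:euler-Q}:
\begin{itemize}
\item $\dim\End_Q(N_g)=\qf(\boldsymbol n_g)+\dim\Ext^1_Q(N_g,N_g)$;
\item $\dim\Ext^1_Q(N_g,N_{g+1})=-\langle\boldsymbol n_g,\boldsymbol n_{g+1}\rangle$, because $\Hom_Q(N_g,N_{g+1})=0$ by \eqref{sm:hom-vanish}.
\end{itemize}
The inequality becomes
\[
\sum_g\qf(\boldsymbol n_g)+\sum_g\langle\boldsymbol n_g,\boldsymbol n_{g+1}\rangle+\sum_g\dim\Ext^1_Q(N_g,N_g)\le1.
\]
Substituting \eqref{sm:q0} forces $\sum_g\dim\Ext^1_Q(N_g,N_g)=0$, so each $N_g$ is rigid.

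\emph{Identity \eqref{sm:lin}.} Run the same analysis for $\Tmap[0]{N}{X}$, whose kernel $\Hom^0_\Pi(N,X)=K\incl$ is one-dimensional. By \eqref{sm:target} its target is $\Ext^1_Q(P_0,N_1)^*\oplus\Ext^1_Q(P_1,N_2)^*$. The source terms simplify as follows.
\begin{itemize}
\item Since $Q$ is hereditary, $\Ext^1_Q(P_g,P_g)\to\Ext^1_Q(N_g,P_g)$ is onto, and $P_g$ is rigid; hence $\Ext^1_Q(N_g,P_g)=0$ and $\dim\Hom_Q(N_g,P_g)=\langle\boldsymbol n_g,\boldsymbol p_g\rangle$.
\item $\Hom_Q(P_g,N_{g+1})=0$ by \eqref{sm:hom-vanish}, so the target has dimension $-\langle\boldsymbol p_0,\boldsymbol n_1\rangle-\langle\boldsymbol p_1,\boldsymbol n_2\rangle$.
\end{itemize}
Rank--nullity then shows that \eqref{sm:lin} holds with ``$\ge$'', with equality if and only if $\Tmap[0]{N}{X}$ is surjective.

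\emph{Main obstacle: the reverse inequality for \eqref{sm:lin}.} My first attempt is to obtain surjectivity from \cref{sm:end}. For $u\in\bigoplus_g\End_Q(P_g)$, the functional $\Tmap{N}{X}(u\incl)$ is the restriction $\incl^*\bigl(\Tmap{X}{}(u)\bigr)$, because the data of $N$ is the restriction of that of $X$. Since $\Tmap[0]{X}{}$ is onto by \eqref{sm:full}, surjectivity reduces to that of
\[
\Ext^1_Q(P_g,P_{g+1})^*\to\Ext^1_Q(P_g,N_{g+1})^*,
\]
that is, to injectivity of $\incl_*\colon\Ext^1_Q(P_g,N_{g+1})\to\Ext^1_Q(P_g,P_{g+1})$. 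By the long exact sequence, this is equivalent to surjectivity of $\Hom_Q(P_g,P_{g+1})\to\Hom_Q(P_g,P_{g+1}/N_{g+1})$. That need not hold a priori, so this step is the delicate one.

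If it fails, the fallback is to compute $\dim\Hom_\Pi(N,X)$ independently, using $\Ext^1_\Pi(X,X)=\Ext^1_\Pi(Y,X)=0$ together with \eqref{eq:cb} applied to the pairs $(N,X)$ and $(X,Y)$. Comparing that count with $1+\sum_g\dim\Hom_Q(N_{g+1},P_g)$, obtained from \eqref{sm:cross-maps}, would then pin down the rank of $\Tmap[0]{N}{X}$ and give equality in \eqref{sm:lin}.
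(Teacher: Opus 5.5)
Your arguments for \eqref{sm:q0} and for the rigidity of the $N_g$ are the paper's. Your first attempt at \eqref{sm:lin} is also exactly the paper's route: surjectivity of $\Tmap[0]{N}{X}$, deduced from $\Tmap{N}{X}(u\incl)(\xi)=\Tmap{X}{}(u)(\incl_*\xi)$ and \eqref{sm:full}. The gap is that you stop at the injectivity of $\incl_*\colon\Ext^1_Q(P_g,N_{g+1})\to\Ext^1_Q(P_g,P_{g+1})$ and call it delicate, when it is immediate.

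Since $\Hom_Q(P_g,P_{g+1})=0$ by \eqref{sm:hom-vanish}, your criterion reduces to $\Hom_Q(P_g,P_{g+1}/N_{g+1})=0$, and this holds. The interval summands of a quotient of $P_{g+1}$ start at starting vertices of $P_{g+1}$, namely $2,3$ for $g=0$ and $4,5$ for $g=1$. Those of $P_g$ start at $1$, resp.\ $2,3$. By \eqref{eq:segment-hom}, a nonzero map $[a,b]\to[c,d]$ requires $c\le a$. (Equivalently, $P_g$ is generated at its starting vertices, where $P_{g+1}$ vanishes.) Hence the restrictions $\Ext^1_Q(P_g,P_{g+1})^*\to\Ext^1_Q(P_g,N_{g+1})^*$ are onto, $\Tmap[0]{N}{X}$ is surjective, and \eqref{sm:lin} follows; this is precisely how the paper concludes. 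There is also a sign slip: rank--nullity with one-dimensional kernel gives the left-hand side of \eqref{sm:lin} $\le1$, not $\ge1$.

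Your fallback would not have closed the gap. The kernel of $\Tmap[0]{N}{X}$ is already known to be $K\incl$, so an independent count of $\dim\Hom_\Pi(N,X)$ only reconfirms what you have; the rank is not in question. What \eqref{sm:lin} needs is the vanishing of the cokernel of $\Tmap[0]{N}{X}$. That cokernel is a direct summand of the cokernel of $\Tmap{N}{X}$, which embeds in $\Ext^1_\Pi(N,X)$, and Hom-counts do not control this group. Applying \eqref{eq:cb} to $(N,X)$ and $(X,Y)$ introduces further unknowns: $\dim\Hom_\Pi(X,N)$, $\dim\Hom_\Pi(Y,X)$ and $\dim\Ext^1_\Pi(N,X)$. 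The resulting relations are mutually consistent and do not force $\Ext^1_\Pi(N,X)=0$. Indeed, a \dist{} submodule can have $\Ext^1_\Pi(N,X)\ne0$: on the vertices $1,2$, take $X=\Mq([1,2])\oplus\Mq([1,1])$ with zero reverse arrows and $N=\Mq([2,2])$. So the combinatorial vanishing $\Hom_Q(P_g,P_{g+1}/N_{g+1})=0$ is the missing ingredient.
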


\begin{proof}
We first prove \eqref{sm:q0}. By \eqref{sm:cross-maps}, and since
$\ker \Tmap[0]{N}{}$ is one-dimensional,
\[
 \dim\End_\Pi(N)=1+\dim\Hom_Q(N_1,N_0)+\dim\Hom_Q(N_2,N_1).
\]
On the other hand, by \eqref{eq:euler-Q}, \eqref{sm:hom-vanish} and \eqref{sm:ext-vanish},
\[
 \qf(\gdim N)=\sum_g\qf(\boldsymbol n_g)+\dim\Hom_Q(N_1,N_0)+\dim\Hom_Q(N_2,N_1)
 +\langle \boldsymbol n_0,\boldsymbol n_1\rangle+\langle \boldsymbol n_1,\boldsymbol n_2\rangle,
\]
and $\dim\End_\Pi(N)=\qf(\gdim N)$ by \eqref{eq:rigid-q}, since $N$ is
rigid. This proves \eqref{sm:q0}.

Next we show that each $N_g$ is rigid. By \eqref{sm:target}
the map $\Tmap[0]{N}{}$ takes values in $\Ext^1_Q(N_0,N_1)^*\oplus\Ext^1_Q(N_1,N_2)^*$.
Since $\ker \Tmap[0]{N}{}$ is one-dimensional, \eqref{eq:euler-Q} and
\eqref{sm:hom-vanish} give
\[
 \sum_g\dim\End_Q(N_g)-1=\rank \Tmap[0]{N}{}
 \le\sum_{g=0}^1\dim\Ext^1_Q(N_g,N_{g+1})
 =-\langle \boldsymbol n_0,\boldsymbol n_1\rangle-\langle \boldsymbol n_1,\boldsymbol n_2\rangle.
\]
By \eqref{sm:q0} and
$\qf(\boldsymbol n_g)=\dim\End_Q(N_g)-\dim\Ext^1_Q(N_g,N_g)$, the right-hand side is
$\sum_g\dim\End_Q(N_g)-\sum_g\dim\Ext^1_Q(N_g,N_g)-1$. Hence
$\sum_g\dim\Ext^1_Q(N_g,N_g)\le0$, and every $N_g$ is rigid.

Finally, we prove \eqref{sm:lin}. By \eqref{sm:target} the map
\[
 \Tmap[0]{N}{X}:\bigoplus_g\Hom_Q(N_g,P_g)\longrightarrow
 \Ext^1_Q(P_0,N_1)^*\oplus\Ext^1_Q(P_1,N_2)^*
\]
is defined, and its kernel $\Hom^0_\Pi(N,X)$ is one-dimensional. This map is surjective. Indeed, since $\incl$ is a
$\Pi$-map, the definition of $T$ gives $\Tmap{N}{X}(u\incl)(\xi)=\Tmap{X}{}(u)(\incl_*\xi)$
for $u\in\bigoplus_g\End_Q(P_g)$, so that $\Tmap[0]{N}{X}(u\incl)$ is the image of
$\Tmap[0]{X}{}(u)$ under the restriction maps
$\Ext^1_Q(P_g,P_{g+1})^*\to\Ext^1_Q(P_g,N_{g+1})^*$. These are surjective,
being dual to the maps $\Ext^1_Q(P_g,N_{g+1})\to\Ext^1_Q(P_g,P_{g+1})$,
which are injective since $\Hom_Q(P_g,P_{g+1}/N_{g+1})=0$: a quotient of
$P_{g+1}$ has the same starting vertices as $P_{g+1}$, namely $2,3$ for
$g=0$ and $4,5$ for $g=1$, and these exceed the starting vertices of
$P_g$, so that \eqref{eq:segment-hom} applies. As $\Tmap[0]{X}{}$ is surjective by
\cref{sm:end}, so is $\Tmap[0]{N}{X}$.

The groups $\Ext^1_Q(N_g,P_g)$ vanish because $P_g$ is rigid, $N_g$ is a
submodule, and $Q$ is hereditary, and $\Hom_Q(P_g,N_{g+1})=0$ by
\eqref{sm:hom-vanish}. Hence, by \eqref{eq:euler-Q},
\begin{align*}
 \dim\bigoplus_g\Hom_Q(N_g,P_g)&=\sum_g\langle \boldsymbol n_g,\boldsymbol p_g\rangle,\\
 \dim\bigl(\Ext^1_Q(P_0,N_1)^*\oplus\Ext^1_Q(P_1,N_2)^*\bigr)
 &=-\langle \boldsymbol p_0,\boldsymbol n_1\rangle-\langle \boldsymbol p_1,\boldsymbol n_2\rangle.
\end{align*}
As $\Tmap[0]{N}{X}$ is surjective with one-dimensional kernel, the first number
exceeds the second by one, which is \eqref{sm:lin}.
\end{proof}

We now express these conditions in terms of the multisegment of $N$. By
\cref{lem:sub-intervals}\cref{item:sub-start}, applied to $N_g\subset P_g$,
each $N_g$ is a direct sum of intervals taken from the disjoint lists
\begin{subequations}\label{sm:alphabets}
\begin{align}
 \Ilist_0&=([1,2],[2,2]),\label{sm:alph0}\\
 \Ilist_1&=([2,5],[3,5],[4,5],[5,5],[3,4],[4,4]),\label{sm:alph1}\\
 \Ilist_2&=([4,7],[5,7],[6,7],[7,7],[5,6],[6,6]).\label{sm:alph2}
\end{align}
\end{subequations}
We write $\Ilist=\Ilist_0\sqcup\Ilist_1\sqcup\Ilist_2$ for the set of the
fourteen intervals occurring in them, and
$\Ilist'=\Ilist\setminus\{[7,7]\}$. We write $m_{c,d}$ for the multiplicity of $[c,d]$ in $\m_N$, so that
$\boldsymbol n_g=\sum_{[c,d]\in\Ilist_g}m_{c,d}\gdim[c,d]$. By
\cref{lem:sub-intervals}\cref{item:sub-count} and \eqref{sm:layers}, the
number of summands of $N_g$ ending at $2,5,4,7,6$ is at most $3,3,8,8,8$
respectively, that is,
\begin{subequations}\label{sm:caps}
\begin{gather}
 m_{1,2}+m_{2,2}\le3,\label{sm:caps0}\\
 m_{2,5}+m_{3,5}+m_{4,5}+m_{5,5}\le3,\qquad m_{3,4}+m_{4,4}\le8,\label{sm:caps1}\\
 m_{4,7}+m_{5,7}+m_{6,7}+m_{7,7}\le8,\qquad m_{5,6}+m_{6,6}\le8,\label{sm:caps2}
\end{gather}
\end{subequations}
the three lines concerning $N_0$, $N_1$ and $N_2$.
Next we write \eqref{sm:q0} and \eqref{sm:lin} in terms of the
multiplicities. For intervals $\Gamma,\Gamma'$ put
$\langle\Gamma,\Gamma'\rangle=\langle\gdim\Gamma,\gdim\Gamma'\rangle_Q\in\{-1,0,1\}$ and
$m_\Gamma=m_{c,d}$ if $\Gamma=[c,d]$.
Expanding $\qf(\boldsymbol n_g)$ and $\langle \boldsymbol n_g,\boldsymbol n_{g+1}\rangle$, the identity
\eqref{sm:q0} translates to the quadratic relation
\begin{equation}\label{sm:q0-mult}
 \sum_{g=0}^2\sum_{\Gamma,\Gamma'\in\Ilist_g}
 \langle\Gamma,\Gamma'\rangle m_\Gamma m_{\Gamma'}
 +\sum_{g=0}^1\sum_{\substack{\Gamma\in\Ilist_g\\\Gamma'\in\Ilist_{g+1}}}
 \langle\Gamma,\Gamma'\rangle m_\Gamma m_{\Gamma'}=1.
\end{equation}
The left-hand side of \eqref{sm:lin} is additive in $\boldsymbol n_0,\boldsymbol n_1,\boldsymbol n_2$. For
$[c,d]\in\Ilist_g$ the contribution of one copy of $[c,d]$ is
\begin{equation}\label{sm:weight-def}
 w_{c,d}=\langle\gdim[c,d],\boldsymbol p_g\rangle+\langle \boldsymbol p_{g-1},\gdim[c,d]\rangle,
 \qquad \boldsymbol p_{-1}=0.
\end{equation}
A direct computation gives, in the order of \eqref{sm:alphabets},
\begin{equation}\label{sm:weights}
 \begin{gathered}
 (w_{1,2},w_{2,2})=(3,3),\qquad
 (w_{2,5},w_{3,5},w_{4,5},w_{5,5},w_{3,4},w_{4,4})=(0,8,11,3,5,8),\\
 (w_{4,7},w_{5,7},w_{6,7},w_{7,7},w_{5,6},w_{6,6})=(-3,5,13,8,-3,5).
 \end{gathered}
\end{equation}
Hence \eqref{sm:lin} reads
\begin{equation}\label{sm:weighted}
 \begin{aligned}
 &3(m_{1,2}+m_{2,2})+8m_{3,5}+11m_{4,5}+3m_{5,5}+5m_{3,4}+8m_{4,4}\\
 &\quad-3m_{4,7}+5m_{5,7}+13m_{6,7}+8m_{7,7}-3m_{5,6}+5m_{6,6}=1.
 \end{aligned}
\end{equation}

\begin{lemma}[Constraints on the multiplicities]\label{sm:alphabetlemma}
\leavevmode
\begin{enumerate}[label=\textup{(\roman*)},ref=\roman*]
\item\label{item:alph-middle}
One has $N_1\ne0$ and $m_{7,7}=0$.
\item\label{item:alph-rigid}
There are no $\Gamma,\Gamma'$ in the same list $\Ilist_g$ with
$m_\Gamma,m_{\Gamma'}>0$ and $\Gamma\prec\Gamma'$.
\end{enumerate}
\end{lemma}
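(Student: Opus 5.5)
Part \cref{item:alph-rigid} is a direct consequence of \cref{sm:euler}, and part \cref{item:alph-middle} will follow from the fact, established in \cref{sm:cyclicity}, that $N$ is indecomposable with $\End^0_\Pi(N)=K1_N$, combined with the linear identity \eqref{sm:weighted}.

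For \cref{item:alph-rigid}, suppose $\Gamma,\Gamma'\in\Ilist_g$ both occur in $N_g$ and $\Gamma\prec\Gamma'$. Then $\Mq(\Gamma)\oplus\Mq(\Gamma')$ is a direct summand of $N_g$. By \eqref{eq:prec-ext}, $\Ext^1_Q(\Mq(\Gamma),\Mq(\Gamma'))\ne0$, so $\Ext^1_Q(N_g,N_g)\ne0$. This contradicts the rigidity of each $N_g$ proved in \cref{sm:euler}.

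For $m_{7,7}=0$, assume $\Mq([7,7])$ is a $Q$-direct summand of $N_2$, and write $N_2=S\oplus N_2'$ with $S\simeq\Mq([7,7])$. I will show that $S$ is a $\Pi$-direct summand of $N$.
\begin{enumerate}[label=\textup{(\alph*)},ref=\alph*]
\item By the description following \eqref{sm:matrix}, the reverse arrow $7\to6$ of $X$ is zero on the $\Delta_4$-space at $7$. Its only possible components go to intervals of $P_1$ containing $6$, and there are none. Hence the reverse maps of $N$ vanish on $S$.
\item Nothing maps into $S$. The only reverse arrow into vertex $7$ would start at vertex $8$, which does not exist. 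The reverse maps land in $N_1\subset P_1$, which is supported in $[2,5]$. Finally, $S$ is a $Q$-summand of $N_2$, and $N_2$ is a $Q$-summand of $N_Q$.
\end{enumerate}
Consequently $N=S\oplus N'$ as $\Pi$-modules, where $N'=N_0\oplus N_1\oplus N_2'$. Since $N$ is indecomposable, $N=S$. But then \eqref{sm:weighted} would read $w_{7,7}=8=1$, which is absurd. (Alternatively, the projection onto $S$ would be a second independent degree-$0$ endomorphism, contradicting $\End^0_\Pi(N)=K1_N$.)

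For $N_1\ne0$, assume $N_1=0$. Since reverse maps send $N_{g+1}$ into $N_g$, all reverse maps of $N$ vanish: those leaving $N_2$ land in $N_1=0$, and those leaving $N_1$ have source $0$. There are also no $Q$-arrows between $N_0$ and $N_2$, because $N_0\oplus N_2$ is a $Q$-direct sum. So $N=N_0\oplus N_2$ as $\Pi$-modules, and indecomposability forces $N$ to be a single graded piece carrying zero reverse maps. Then $\End^0_\Pi(N)=\End_Q(N)$ is one-dimensional. Since $N$ is also $Q$-rigid, $N$ is a single interval $\Gamma\in\Ilist_0\sqcup\Ilist_2$.

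Equation \eqref{sm:weighted} then says $w_\Gamma=1$. By \eqref{sm:weights}, no weight in $\Ilist_0$ or $\Ilist_2$ equals $1$, which is a contradiction.

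The only delicate point I expect is item (b) in the $m_{7,7}$ argument: checking that $S$ really splits off as a $\Pi$-summand. This relies on the support of $P_1$ being in $[2,5]$ and on the explicit shape of the data \eqref{sm:matrix}.
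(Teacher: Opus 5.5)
Your proof is correct and is essentially the paper's argument. Part (ii) comes from the $Q$-rigidity of the $N_g$ in \cref{sm:euler} together with \eqref{eq:prec-ext}; $N_1\ne0$ follows by reducing to a single interval whose weight would have to be $1$; and $m_{7,7}=0$ follows by splitting $[7,7]$ off as a $\Pi$-summand. The paper differs only cosmetically: it finishes the $[7,7]$ case by contradicting $N_1\ne0$ rather than via $w_{7,7}=8$, and it gets the single interval directly from $Q$-indecomposability, so your detour through $\End^0_\Pi(N)=K1_N$ and $Q$-rigidity is harmless but unnecessary.
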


\begin{proof}
For \cref{item:alph-middle}, recall that $N$ is indecomposable, being
cyclic. If $N_1=0$, then both reverse maps of $N$ vanish, so $N$ is an
indecomposable $Q$-module, hence a single interval $[c,d]$, and
\eqref{sm:lin} says $w_{c,d}=1$, which is excluded by \eqref{sm:weights}.
Thus $N_1\ne0$. Now let $S_7=[7,7]$ be an interval summand of $N_2$; it is a $Q$-summand of $N_Q$.
Since the intervals of $N_1$ end at $4$ or $5$, the reverse arrow $7\to6$ maps $S_7$ to $(N_1)_6=0$,
and no reverse arrow ends at the vertex $7$. Hence $S_7$ is a direct $\Pi$-summand of $N$, so that
$N=S_7$ by indecomposability, contradicting $N_1\ne0$.

Part \cref{item:alph-rigid} is \cref{sm:euler},
by \eqref{eq:prec-ext}.
\end{proof}

\begin{lemma}[Embedding inequalities]\label{sm:ineqlemma}
The multiplicities satisfy
\begin{equation}\label{sm:ineq}
 \begin{aligned}
 m_{2,5}&\le m_{1,2}, & m_{3,4}&\le m_{1,2}+m_{2,2}+5,\\
 m_{4,7}&\le m_{2,5}+m_{3,5}+5, & m_{4,7}&\le m_{3,5}+m_{3,4},\\
 m_{5,6}&\le m_{2,5}+m_{3,5}+m_{4,5}+5, & m_{5,6}&\le m_{3,5}+m_{4,5}+m_{3,4}+m_{4,4}.
 \end{aligned}
\end{equation}
\end{lemma}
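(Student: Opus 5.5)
The plan is to read off each inequality from the fact that the reverse maps of $N$ are restrictions of those of $X$, so they must send $N_{g+1}$ into $N_g$. Recall that by \cref{sm:grading} we may assume $N$ is $\hgr$-graded with $N_g\subset P_g$, and that by \cref{lem:sub-intervals} the summands of $N_g$ come from the lists \eqref{sm:alphabets}. The dimension of $N_g$ at a vertex, or of natural subquotients of it, is then a sum of the multiplicities $m_{c,d}$.

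Each inequality will follow from one template. Pick a vertex $v+1$ and a subquotient $U$ of $(N_{g+1})_{v+1}$ whose dimension is the left-hand side. We build $U$ from the interval structure of $N_{g+1}$: a kernel of a forward arrow path isolates the summands ending at a given vertex, and a quotient by the image of a forward arrow isolates the summands starting at $v+1$. Because $N_{g+1}\subset P_{g+1}$ and the forward arrows of $P_{g+1}$ act on each $\Delta_i\otimes M_i$ through the identity of $M_i$, the space $U$ sits inside a specific combination of $\Delta_j$-spaces of $P_{g+1}$. We then apply the reverse map $v+1\to v$ of $X$, whose components are the $G_{i,j}$ of \eqref{sm:matrix}. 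Its image lies in $(N_g)_v$, or in a corresponding quotient of it, whose dimension is the right-hand side apart from the constant. The constant is the dimension of the kernel of the relevant $G$: it is $0$ when the relevant component is $1_W$ or $1_V$, and $\dim\ker\rowmap_i=5$ when it is $\rowmap_i$.

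For example, $(N_1)_2$ consists only of $[2,5]$-summands, so $\dim (N_1)_2=m_{2,5}$. It lies in the $\Delta_2$-space $W$, which the reverse arrow $2\to1$ maps by $G_{1,2}=1_W$ injectively into $(N_0)_1$, of dimension $m_{1,2}$; this gives $m_{2,5}\le m_{1,2}$. Next, the part of $(N_1)_3$ killed by the path $3\to5$ has dimension $m_{3,4}$ and lies in the $\Delta_3$-space $V$. The reverse arrow $3\to2$ maps it by $\rowmap_1$ into $(N_0)_2$, of dimension $m_{1,2}+m_{2,2}$, with kernel of dimension at most $5$; this gives the second inequality.

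For $m_{4,7}\le m_{3,5}+m_{3,4}$, note $(N_2)_4$ consists only of $[4,7]$-summands and lies in the $\Delta_4$-space. The reverse arrow $4\to3$ acts on it by $(\rowmap_2,1_V)$ into the $\Delta_2$- and $\Delta_3$-spaces at $3$. Composing with the quotient of $(N_1)_3$ by the image of the forward arrow from $(N_1)_2$ kills the $\Delta_2$-component and leaves $1_V$, which is injective. That quotient has dimension $m_{3,5}+m_{3,4}$. If instead we keep only the $\Delta_2$-component via $\rowmap_2$ and count the summands of $N_1$ passing through vertex $3$ that contain vertex $5$, we get $m_{4,7}\le m_{2,5}+m_{3,5}+5$. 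The two inequalities for $m_{5,6}$ are handled the same way at the reverse arrow $5\to4$. Take the $[5,6]$-part of $(N_2)_5$, which is the kernel of the path $5\to7$ modulo the image from $(N_2)_4$. Its image under $G_{2,5}=\rowmap_3$ is compared with the summands of $N_1$ ending at $5$ and containing $4$, which gives the bound $+5$. Its image under $G_{3,5}=1_V$ is compared with the summands of $N_1$ starting at $3$ or $4$, that is, modulo the image of the forward arrow from vertex $2$, which gives the bound without a constant.

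I expect the main obstacle to be bookkeeping rather than any conceptual step. One has to check that each chosen subquotient of $(N_{g+1})_{v+1}$ lands inside the intended single $\Delta_j$-space, or the intended quotient, of $P_{g+1}$, and that the target subquotient of $(N_g)_v$ has exactly the claimed dimension. The case of $[5,6]$ inside $N_2$ needs the most care: there the $[4,7]$-summands also pass through vertices $5$ and $6$, and they must be quotiented out compatibly with the reverse maps.
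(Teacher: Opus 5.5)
Your proposal is correct and is essentially the paper's argument. You use the same six instances: the same vertex $v$, the same source subspace lying in a single $\Delta_j$-space, and the same component $G_{i,j}$, whose kernel has dimension $0$ or $5$. The only difference is bookkeeping on the target side. You measure it through images and cokernels of forward paths in $N_g$, whereas the paper bounds $\dim\pr_i\bigl((N_g)_v\bigr)$ via \eqref{eq:segment-hom}; in each of the six cases the two give the same count.

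One slip is cosmetic. The space $\ker\bigl((N_2)_5\to(N_2)_7\bigr)$ already has dimension $m_{5,6}$, lies in the $\Delta_5$-space, and meets the image of $(N_2)_4$ trivially. So no quotient by the $[4,7]$-summands is needed, and the ``main obstacle'' you anticipate does not arise.
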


\begin{proof}
Fix decompositions of $N_0,N_1,N_2$ into interval modules, and let
$r_v:X_{v+1}\to X_v$ be the reverse map of $X$ at the vertex $v$. Since $N$
is a submodule, $r_v$ maps $(N_{g+1})_{v+1}$ into $(N_g)_v$. For an
interval type $\Delta_i=[a,b]$ of $P_g$ and $v\in\Delta_i$, let
$\pr_i$ be the projection of $X_v$ onto the $\Delta_i$-space,
and let $\mu_i(v)$ be the number of summands $[c,d]$ of $N_g$ containing
$v$ with $a\le c\le b\le d$. By \eqref{eq:segment-hom} only these summands
have a nonzero component in $\Delta_i\otimes M_i$, so
$\dim\pr_i\bigl((N_g)_v\bigr)\le\mu_i(v)$.

Now let $(i,j)\in\Epairs$ with $v\in\Delta_i$ and $v+1\in\Delta_j$, and let
$S\subset(N_{g+1})_{v+1}$ be contained in the $\Delta_j$-space. Then
$G_{i,j}(S)=\pr_i r_v(S)\subset\pr_i\bigl((N_g)_v\bigr)$, whence
\begin{equation}\label{sm:ineq-principle}
 \dim S\le\dim\ker G_{i,j}+\mu_i(v),
\end{equation}
where $\dim\ker G_{i,j}$ is $0$ if $G_{i,j}$ is an identity and $5$ if it
is one of the $\rowmap_l$. The six inequalities \eqref{sm:ineq} are the instances
of \eqref{sm:ineq-principle} listed below; in each, $\dim S$ is the
left-hand side and $\dim\ker G_{i,j}+\mu_i(v)$ the right-hand side.
\begin{center}
\begin{tabular}{cccc}\toprule
$v$&$S$&$(i,j)$&$G_{i,j}$\\\midrule
$1$&$(N_1)_2$&$(1,2)$&$1_W$\\
$2$&span at $3$ of the summands $[3,4]$ of $N_1$&$(1,3)$&$\rowmap_1$\\
$3$&$(N_2)_4$&$(2,4)$&$\rowmap_2$\\
$3$&$(N_2)_4$&$(3,4)$&$1_V$\\
$4$&span at $5$ of the summands $[5,6]$ of $N_2$&$(2,5)$&$\rowmap_3$\\
$4$&span at $5$ of the summands $[5,6]$ of $N_2$&$(3,5)$&$1_V$\\\bottomrule
\end{tabular}
\end{center}
Here $(N_1)_2$ and $(N_2)_4$ lie in the $\Delta_2$- and $\Delta_4$-spaces,
since $[2,5]$ and $[4,7]$ are the only intervals of $\Ilist_1$ and
$\Ilist_2$ containing $2$ and $4$ respectively, and the summands $[3,4]$
and $[5,6]$ have no nonzero $Q$-maps to $\Delta_2$ and $\Delta_4$ by
\eqref{eq:segment-hom}.
\end{proof}

\subsection{The finite enumeration}\label{sm:finite-sec}
Recall that $\Ilist'$ is the set of the thirteen intervals of
\eqref{sm:alphabets} other than $[7,7]$.

\begin{lemma}[Exact finite exhaustion]\label{sm:finite}
Consider the vectors $(m_{c,d})_{[c,d]\in\Ilist'}$ of nonnegative
integers satisfying \eqref{sm:caps}, the condition of
\cref{sm:alphabetlemma}\cref{item:alph-rigid}, $m_{c,d}>0$ for some
$[c,d]\in\Ilist_1$, and the identities \eqref{sm:q0-mult} and
\eqref{sm:weighted}, with $m_{7,7}=0$. There are exactly
sixty-nine of them, and exactly one of them also satisfies
\eqref{sm:ineq}, namely
\begin{equation}\label{sm:fullcounts}
 m_{1,2}=m_{2,5}=3,\qquad m_{3,4}=m_{4,7}=m_{5,6}=8,\qquad
 m_{c,d}=0\text{ otherwise}.
\end{equation}
\end{lemma}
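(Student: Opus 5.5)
The plan is to organize the finite enumeration so that it is provably exhaustive and small enough to tabulate and certify. I would first use the caps \eqref{sm:caps} to bound every variable: $m_{1,2},m_{2,2}\le3$, the $\Ilist_1$-summands ending at $5$ by $3$, and the others by $8$. That leaves a finite box of at most a few times $10^{8}$ points. The rigidity condition of \cref{sm:alphabetlemma}\cref{item:alph-rigid} cuts this down a lot, since it is a support condition within each list. Inside $\Ilist_0$ we have $[1,2]\prec[2,2]$, so only one of the two can occur. Inside $\Ilist_1$, the pairs related by $\prec$ (for instance $[2,5]$ and $[3,5]$ are not, but $[3,4]\prec[4,5]$, $[3,4]\prec[5,5]$, $[4,4]\prec[5,5]$, $[2,5]$ versus the nested ones, and so on) give a short list of admissible supports. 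The same holds for $\Ilist_2$ by the shift by $2$. I would list explicitly the admissible support patterns $S_0,S_1,S_2$ in each list, together with the requirement that $S_1\ne\emptyset$.

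Next I would use the linear identity \eqref{sm:weighted} as the main pruning tool. Only $m_{4,7}$ and $m_{5,6}$ have negative weight ($-3$), and both are at most $8$, so the nonnegative part of the left side is at most $1+48=49$. This bounds every positive-weight variable: for instance $m_{6,7}\le3$ and $m_{4,5}\le4$. It also forces the weighted sum of the $\Ilist_0$, $\Ilist_1$ and positive $\Ilist_2$ terms to be $\equiv1\pmod 3$ after subtracting $3(m_{4,7}+m_{5,6})$, which with the weights $8,11,5,13$ gives a congruence constraint. Note that $w_{2,5}=0$, so $m_{2,5}$ is not bounded this way beyond its cap of $3$. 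For each admissible support pattern I would run a nested loop over the variables in that support, within these bounds, and then solve \eqref{sm:weighted} for one remaining variable, say $m_{4,7}$ or $m_{5,6}$ when it is in the support. Among the surviving vectors I would test the quadratic identity \eqref{sm:q0-mult}, whose coefficients $\langle\Gamma,\Gamma'\rangle\in\{-1,0,1\}$ I would tabulate once from \eqref{eq:euler-Q} and \eqref{eq:segment}. The output is the claimed list of sixty-nine vectors, which I would print in a table.

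Finally I would check the six inequalities \eqref{sm:ineq} against each of the sixty-nine vectors. I expect most to fail one of the two bounds on $m_{4,7}$ and $m_{5,6}$. The reason is that \eqref{sm:weighted} essentially forces the negative-weight variables $m_{4,7},m_{5,6}$ to be large, while \eqref{sm:ineq} ties them to the multiplicities in $\Ilist_1$. For the surviving vector \eqref{sm:fullcounts} I would verify directly that the caps hold with equality and that $9\cdot 0+\dots$ works out: the weights give $3\cdot3+0\cdot3+5\cdot8-3\cdot8-3\cdot8=1$, and \eqref{sm:q0-mult} evaluates to $1$, consistent with $N=X$ from \cref{sm:end}.

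The main obstacle is making the exhaustion trustworthy rather than merely asserted, since this is genuinely computer-assisted. I would present the search as a short explicit algorithm with all bounds justified as above, so that a reader can rerun it. Where feasible, I would also give a hand argument for the last step, namely why \eqref{sm:ineq} eliminates all but one candidate: combine $m_{4,7}\le m_{3,5}+m_{3,4}$ and $m_{5,6}\le m_{3,5}+m_{4,5}+m_{3,4}+m_{4,4}$ with \eqref{sm:weighted} to show that the positive $\Ilist_1$ weights overshoot unless $m_{3,5}=m_{4,5}=m_{4,4}=0$ and $m_{3,4}=8$.
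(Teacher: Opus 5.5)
You organize the search the same way the paper does. You list, layer by layer, the vectors allowed by \eqref{sm:caps} and the $\prec$-condition of \cref{sm:alphabetlemma}\cref{item:alph-rigid}, glue the layers with \eqref{sm:weighted}, and then filter by \eqref{sm:q0-mult} and \eqref{sm:ineq}. The paper sorts the $g=2$ vectors by $w_2$ and looks up $1-w_0-w_1$; your nested loops solving for one variable do the same job.

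The gap is that the search space you describe is not the one in the statement. Since the lemma is nothing but an exhaustion, this breaks the proof as written.

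\textbf{The $\Ilist_0$ error.} You assert $[1,2]\prec[2,2]$, but $\prec$ requires a strictly larger right endpoint. Indeed, both $\Ext^1_Q$ groups between $\Mq([1,2])$ and $\Mq([2,2])$ vanish by \eqref{eq:segment-ext}. There is no $\prec$-pair in $\Ilist_0$ at all, which is why the paper has $10$ vectors for $g=0$ rather than the $7$ your rule allows.

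The vectors you would drop do occur among the sixty-nine. Take
\[
 m_{1,2}=2,\quad m_{2,2}=1,\quad m_{2,5}=2,\quad m_{3,4}=5,\quad \{m_{4,7},m_{5,6}\}=\{5,6\},
\]
with all other multiplicities zero. This satisfies \eqref{sm:caps}, the $\prec$-condition and $N_1\ne0$. It satisfies \eqref{sm:weighted}, since $9+25-33=1$. It satisfies \eqref{sm:q0-mult}, since the layer terms are $7,29,61$ and the cross terms are $-19,-77$. Your search would miss these two vectors and return fewer than sixty-nine. It would also never check that they violate \eqref{sm:ineq}; they do, through $m_{5,6}\le m_{3,5}+m_{4,5}+m_{3,4}+m_{4,4}$ and $m_{4,7}\le m_{3,5}+m_{3,4}$ respectively.

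\textbf{The nested-interval error.} You also seem to count ``$[2,5]$ versus the nested ones'' among the related pairs. Nested intervals are never $\prec$-related. Imposing that rule would discard \eqref{sm:fullcounts} itself, which contains both $[2,5]$ and $[3,4]$.

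\textbf{The correct relations.} There are none in $\Ilist_0$. In $\Ilist_1$ there are exactly $[3,4]\prec[4,5]$, $[3,4]\prec[5,5]$ and $[4,4]\prec[5,5]$. In $\Ilist_2$ only $[5,6]\prec[6,7]$ matters once $m_{7,7}=0$. With these, the per-layer counts are $10$, $555$ and $3105$, as in the paper.

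\textbf{Your optional hand argument.} It fails as stated. Inserting $m_{4,7}\le m_{3,5}+m_{3,4}$ and $m_{5,6}\le m_{3,5}+m_{4,5}+m_{3,4}+m_{4,4}$ into \eqref{sm:weighted} leaves $m_{3,4}$ with coefficient $5-6<0$, so nothing overshoots. More fundamentally, the linear conditions and the $\prec$-condition do not single out \eqref{sm:fullcounts}. The vector $m_{1,2}=2$, $m_{3,4}=5$, $m_{4,7}=m_{5,6}=5$, with all others zero, satisfies \eqref{sm:caps}, the $\prec$-condition, \eqref{sm:weighted} and all six inequalities \eqref{sm:ineq}. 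It is excluded only by \eqref{sm:q0-mult}, whose left-hand side equals $19$ there. Any uniqueness argument must therefore use the quadratic identity.
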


\begin{proof}
This is the only computer-assisted step. For $g=0,1,2$, list the vectors
$(m_{c,d})_{[c,d]\in\Ilist_g\cap\Ilist'}$ that satisfy \eqref{sm:caps0},
\eqref{sm:caps1} and \eqref{sm:caps2} respectively and the condition
of \cref{sm:alphabetlemma}\cref{item:alph-rigid}. Then form the triples of
such vectors with nonzero middle entry that satisfy \eqref{sm:weighted}.
Since the left-hand side of \eqref{sm:weighted} is the sum $w_0+w_1+w_2$ of
the contributions of the three vectors, it suffices to sort the vectors for
$g=2$ by $w_2$ and, for each pair of vectors for $g=0,1$, to look up those
with $w_2=1-w_0-w_1$. Finally, test \eqref{sm:q0-mult} and \eqref{sm:ineq} in
turn.
The counts are
\begin{center}
\begin{tabular}{lr}\toprule
Stage&Count\\\midrule
Vectors for $g=0$, including zero&10\\
Vectors for $g=1$, including zero&555\\
Vectors for $g=2$, including zero&3105\\
Triples satisfying \eqref{sm:weighted}&20249\\
Also satisfying \eqref{sm:q0-mult}&69\\
Also satisfying \eqref{sm:ineq}&1\\\bottomrule
\end{tabular}
\end{center}
\end{proof}

\begin{proof}[Conclusion of the proof of \cref{sm:main}]
By \cref{sm:end}, $X$ is indecomposable and rigid,
and its multisegment is $\m_X$ by \eqref{sm:layers}. Suppose that
$N$ is a \dist{} submodule of $X$, as in \cref{sm:cyclicity}. By \cref{sm:grading} we may assume that $N$ is
$\hgr$-graded. By \eqref{sm:alphabets} and \cref{sm:alphabetlemma}, the
multiplicities $m_{c,d}$ of $\m_N$ vanish outside $\Ilist'$ and satisfy
\eqref{sm:caps}, the condition of
\cref{sm:alphabetlemma}\cref{item:alph-rigid} and $N_1\ne0$. By
\cref{sm:euler} they satisfy \eqref{sm:q0} and \eqref{sm:weighted}, and by
\cref{sm:ineqlemma} they satisfy \eqref{sm:ineq}. Hence, by
\cref{sm:finite}, they are given by \eqref{sm:fullcounts}, that is,
$\m_N=\m_X$. So $\gdim N=\gdim X$, and $N=X$, a contradiction.
\end{proof}

\section*{AI disclosure}
During the preparation of this paper the authors used Claude (Anthropic), an AI assistant, to help with
drafting and revising the exposition, checking references against the literature, checking notation for
consistency, and editing the LaTeX source. The example in \cref{sec:thirty} was produced by ChatGPT (OpenAI).
All mathematical content was checked by the authors, who take full responsibility for the paper.


\def\cprime{$'$} 

\end{document}